\def\forall{\hbox{for all}~}
\def\L{\mathbf{L}}
\def\ve{\varepsilon}
\def\Z{{\mathbb Z}}
\def\R{{\mathbb R}}
\def\implies{\Longrightarrow}
\def\vp{\varphi}
\def\E{{\cal E}}
\def\TV{\hbox{Tot.Var.}}
\def\vs{\vskip 2em}
\def\v{\vskip 1em}
\def\O{{\cal O}}
\def\M{{\cal M}}
\def\C{{\cal C}}
\def\J{{\cal J}}
\def\ov{\overline}
\def\Tilde{\widetilde}
\def\Hat{\widehat}
\def\meas{\hbox{meas}}
\def\dint{\int\!\!\int}
\def\sign{{\rm sign}}
\def\bega{\begin{array}}
\def\enda{\end{array}}
\def\begi{\begin{itemize}}
\def\endi{\end{itemize}}
\def\ds{\displaystyle}
\def\bel{\begin{equation}\label}
\def\eeq{\end{equation}}
\def\sqr#1#2{\vbox{\hrule height .#2pt
\hbox{\vrule width .#2pt height #1pt \kern #1pt
\vrule width .#2pt}\hrule height .#2pt }}
\newtheorem{theorem}{Theorem}[section]
\newtheorem{corollary}{Corollary}[section]
\newtheorem{lemma}{Lemma}[section]
\newtheorem{example}{Example}[section]
\newtheorem{proposition}{Proposition}[section]
\newtheorem{remark}{Remark}[section]
\newtheorem{definition}{Definition}[section]
\begin{document}

\title{\bf Conservation Laws with Discontinuous Gradient-Dependent Flux: 
the Stable Case}
\vs

\author{Debora Amadori$^*$,  Alberto Bressan$^{**}$
and Wen Shen$^{**}$
\\
\, \\
{\small  *  Dipartimento di Matematica Pura e Applicata, Universit\`a degli Studi dell'Aquila, }
\\  {\small  Via Vetoio,
67010 Coppito, Italy.}\,\\ 
{\small  ** Mathematics Department, Penn State University,
University Park, PA 16802, U.S.A. } \,\\ 
\,\\
{\small E-mails:  debora.amadori@univaq.it, ~axb62@psu.edu,~
wxs27@psu.edu}
\,\\   }
\maketitle  

\begin{abstract}
The paper is concerned with a scalar conservation law with discontinuous gradient-dependent flux.
Namely, the flux is described by two different functions $f(u)$ or $g(u)$, 
when the gradient $u_x$ of the solution is positive or negative, respectively.
We study here the stable case where $f(u)<g(u)$ for all $u\in\R$, with $f,g$ smooth but 
possibly not convex.  A front tracking algorithm is introduced, proving that piecewise constant
approximations converge to the trajectories of a contractive semigroup on $\L^1(\R)$. 
In the spatially periodic case, we prove 
that semigroup trajectories coincide with the unique limits of a suitable class of
vanishing viscosity approximations.
\end{abstract}

\section{Introduction}
\label{sec:1}
\setcounter{equation}{0}

We consider a scalar conservation law where the flux depends on the gradient of the solution 
in a discontinuous way: 
\bel{1}
u_t + \Big[\bar \theta(u_x) f(u) + \bigl(1-\bar \theta(u_x) \bigr)g(u)\Big]_x~=~0,\eeq
where $\bar\theta$ is the step function
\bel{2}\bar \theta(s)~=~\left\{\bega{rl} 1\quad &\hbox{if}\quad s>0,\cr
0\quad &\hbox{if}\quad s<0.\enda\right.\eeq
Here $f$ and $g$ are two smooth flux functions, not necessarily convex.
The flux in \eqref{1} switches between $f$ and  $g$ when $u_x$ changes sign.

The equations \eqref{1}-\eqref{2}) can be used as a model for traffic flow, where  $u$ 
is the density of cars. 
In the region where the  density decreases, i.e.~$u_x(t,x) <0$, 
the cars accelerate, 
while if the density increases, the cars 
decelerate.   As a modeling assumption one may expect that, when
the drivers are in accelerating or decelerating mode (with their foot on the gas or on the brake pedal), their behavior is markedly different. This can lead to two different flux functions: $f$ and $g$, 
for deceleration and acceleration, respectively.
The above equations capture this feature of  traffic flow,
assuming that the switching between the acceleration and deceleration mode happens instantly. 
If $\bar\theta$ is replaced by a smooth function, such transition would be gradual. 

In principle, there can be other models where the flux has
discontinuous dependence on the gradient of the conserved quantity. 
In this paper we thus consider \eqref{1} in a general setting, not necessarily restricted
to traffic flow.

To construct a suitable approximation of \eqref{1}-\eqref{2}, we consider a  smooth, nondecreasing function 
$\theta:\R\mapsto [0,1]$ with 
\bel{tprop1} 
\theta(s)~=~\left\{\bega{cl} 1\quad &\hbox{if}\quad s\geq 1,\cr
0\quad &\hbox{if}\quad s\leq -1,\enda\right.
\qquad\qquad 
\theta(0)\,=\,\frac{1}{ 2}\,,
\eeq
\bel{tprop2}
\left\{ \begin{array}{cl}
\theta'(s)=\theta'(-s)\geq 0\quad &\hbox{for all}~s\in \R, \\[1mm]
\theta'(s)\geq {1/ 2} \quad
&\hbox{for}~~|s|\leq  1/2\,, \\[1mm]
\theta'(s)>0\quad &\hbox{for}~~|s|<1  .
\end{array} \right.
\eeq
For $\ve>0$ small, we then set
\bel{tdef}
\theta_\ve(s)~\doteq~\theta(s/\ve).\eeq
Replacing $\bar\theta$ with $\theta_\ve$ in (\ref{1}) and adding a further viscosity, 
one obtains 
\bel{3} u_t +  \Big[\theta_\ve(u_x) f'(u) + \bigl(1-\theta_\ve(u_x) \bigr)g'(u)\Big] u_x
~=~\theta'_\ve(u_x) \bigl[ g(u) - f(u)\bigr] u_{xx} +\delta\, u_{xx}\,.
\eeq
For any $\ve,\delta>0$, since 
$\theta_\ve$ is increasing, in the case where
$f <g$ the equation \eqref{3}  is parabolic and
hence the Cauchy problem is  well posed.   
We refer to this as the \textbf{stable case}. 

Notice that, when $f < g$, the conservation law (\ref{1}) produces a sink 
at every point of local maximum, and a source at every point of local minimum 
(see Fig.~\ref{f:df4}). In the parabolic approximation (\ref{3}),
due to the large viscosity $\theta'_\ve \approx  \ve^{-1}$ present when $u_x \approx 0$, 
our analysis will show that every local max or min is immediately spread out over an entire interval $\bigl[a(t), b(t)\bigr]$ 
where the solution is nearly constant.

\begin{figure}[htbp]
\begin{center}
\resizebox{.65\textwidth}{!}{
\begin{picture}(0,0)%
\includegraphics{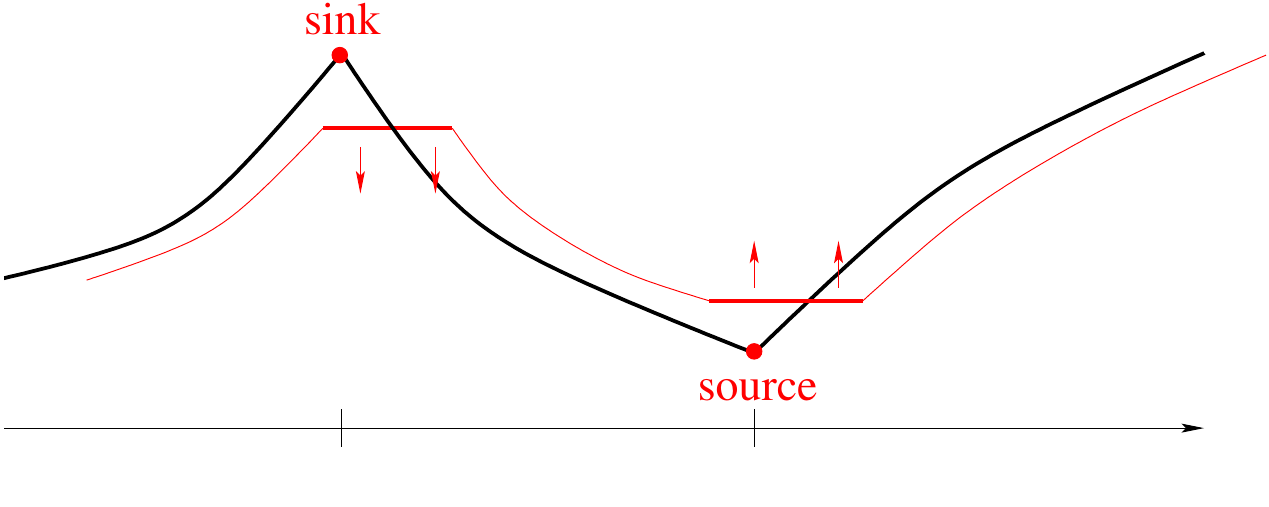}%
\end{picture}%
\setlength{\unitlength}{3947sp}%
\begin{picture}(10140,4043)(-1232,214)
\put(1391,364){\makebox(0,0)[lb]{\smash{\fontsize{20}{24}\usefont{T1}{ptm}{m}{n}{\color[rgb]{0,0,0}$x_1$}%
}}}
\put(4690,364){\makebox(0,0)[lb]{\smash{\fontsize{20}{24}\usefont{T1}{ptm}{m}{n}{\color[rgb]{0,0,0}$x_2$}%
}}}
\put(6001,3164){\makebox(0,0)[lb]{\smash{\fontsize{20}{24}\usefont{T1}{ptm}{m}{n}{\color[rgb]{0,0,0}$\bar u$}%
}}}
\end{picture}%
}
\caption{\small  A solution of (\ref{1})-(\ref{2})  in the stable case where $f< g$.  
Here the initial flux is $f(u)$ for $x<x_1$ and $x>x_2$, and $g(u)$ for $x_1<x<x_2$.
This produces a sink at $x_1$ and a source at $x_2$. }
\label{f:df4}
\end{center}
\end{figure}

The {\bf unstable case} where $f>g$ is studied in the companion paper \cite{ABSu}.
In this case the equation \eqref{3} becomes backward parabolic, hence ill posed.
The conservation law (\ref{1}) has a source (or a sink) at every point $x_j$ of local maximum 
(or local minimum), respectively. See Fig.~\ref{f:df5}.
In a typical situation, this further increases the total variation of the solution. 
In \cite{ABSu} piecewise monotone admissible solutions of \eqref{1} are constructed. 
For a given initial data $\bar u$, a unique solution can be singled out by requiring that  the number of interfaces 
(i.e. the points where the flux switches between $f$ and $g$) remains as small as possible.

\begin{figure}[htbp]
\begin{center}
\resizebox{.65\textwidth}{!}{
\begin{picture}(0,0)%
\includegraphics{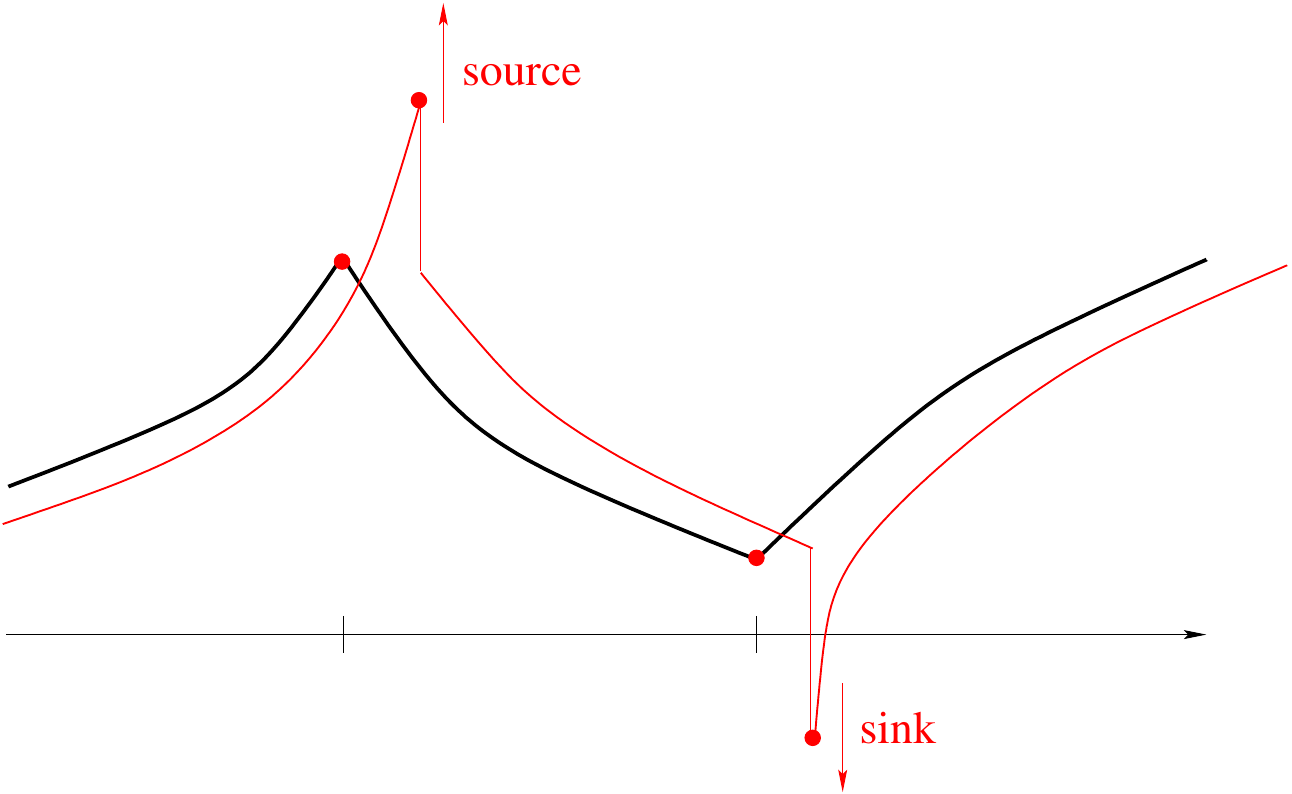}%
\end{picture}%
\setlength{\unitlength}{3947sp}%
\begin{picture}(10319,6339)(-1251,-434)
\put(6226,3164){\makebox(0,0)[lb]{\smash{\fontsize{20}{24}\usefont{T1}{ptm}{m}{n}{\color[rgb]{0,0,0}$\bar u$}%
}}}
\put(1376,389){\makebox(0,0)[lb]{\smash{\fontsize{20}{24}\usefont{T1}{ptm}{m}{n}{\color[rgb]{0,0,0}$x_1$}%
}}}
\put(4726,389){\makebox(0,0)[lb]{\smash{\fontsize{20}{24}\usefont{T1}{ptm}{m}{n}{\color[rgb]{0,0,0}$x_2$}%
}}}
\end{picture}%
}
\caption{\small  A solution of (\ref{1})-(\ref{2})  in the unstable case where $f> g$.  
Here the initial flux is $f(u)$ for $x<x_1$ and $x>x_2$, and $g(u)$ for $x_1<x<x_2$.
This produces a source at $x_1$ and a sink at $x_2$.}
\label{f:df5}
\end{center}
\end{figure}

Throughout the present paper we focus on the stable case $f<g$, and assume
\begi
\item[{\bf (A1)}] {\it The flux functions $f,g$ are $\C^2$ and satisfy, for some $c_0>0$,}
\bel{eq:f-le-g}
g(u) - f(u)~\geq ~c_0 \qquad \forall u\in\R\,.
\eeq
\endi

For any  initial data
\bel{idata} 
u(0,x)~=~\bar u(x) {\color{black} \ \in \L^1_{loc}(\R)}\,,
\eeq
the  parabolic equation \eqref{3} has  a unique solution \cite{Lieb}.
Our goal is to identify the limit of these solutions as $\ve, \delta\to 0+$.  

\begin{definition} \label{def:11} A locally integrable function $u=u(t,x)$ defined on $[0,T]\times\R$ 
is a  {\bf weak solution} to 
the Cauchy problem (\ref{1})-(\ref{2}), (\ref{idata}) if the following holds.
\begi
\item[(i)] The map $t\mapsto u(t,\cdot)$ is continuous with values in $\L^1_{loc}(\R)$, and satisfies (\ref{idata}).
\item[(ii)]  There exists a measurable function $\Theta: [0,T]\times\R\mapsto \{0,1\}$  such that, at every point $(t,x)$
where the partial derivative $u_x(t,x)$ is well defined, we have
\bel{18} \Theta(t,x)~=~\left\{\bega{rl} 1\quad &\hbox{if}\quad u_x(t,x)>0,\\[1mm]
0\quad &\hbox{if}\quad u_x(t,x)<0.\enda\right.\eeq
Moreover, for every test function $\vp\in \C^1_c\bigl(\,]0,T[\,\times\R)$, one has
\bel{weaksol}
 \dint \left\{ u\,\vp_t + \Big[\Theta \,
 f(u) + \bigl(1-\Theta \, 
 \bigr)g(u)\Big] \vp_x \right\}\, dx\,dt~=~0.\eeq
 \endi
 \end{definition}

We will show that, as $\ve,\delta\to 0+$, the solutions of the viscous problem (\ref{3}) and (\ref{idata})
converge to a unique limit, which provides a weak solution to the corresponding Cauchy problem for (\ref{1}).
To simplify the analysis, our main result will be stated in the spatially periodic case, 
where $u(t,\cdot)$ lies in the space
 \begin{align}
 \L^1_{per}(\R)\,\doteq\,\bigg\{ u:\R\mapsto\R\,;~& u(x)=u(x+1)~~\hbox{for all}~x\in\R\,,
 \nonumber \\
~~ & \|u\|_{\L^1_{per}} \doteq \int_0^1 \bigl|u(x)\bigr|\, dx\,<\,+\infty\,\bigg\}.
 \label{sper}
 \end{align}
Since the conservation law (\ref{1}) has finite propagation speed, considering spatially periodic solutions is 
hardly a restriction.  The case where $u(x+p)=u(x)$ for a different period $p$ can be easily reduced to (\ref{sper})
by a linear rescaling of space and time variables.

Next Theorem is the main result of the paper.

\begin{theorem}\label{t:11} Let $f,g$ satisfy {\bf (A1)}.
For any fixed $\ve,\delta>0$, and $\bar u\in \L_{per}^1(\R)$,  
the solution  $u^{\ve,\delta}(t,x)= S^{\ve,\delta}_t \bar u$ of (\ref{3}), (\ref{idata}) is well defined. 
These solutions
yield a contractive semigroup on $\L^1_{per}(\R)$.

Letting $\ve,\delta\to 0$, for every $\bar u\in \L^1_{per}(\R)$ 
 the convergence   $u^{\ve,\delta}(t,x)\to u(t,x)$  holds
in $\L^1_{loc}(\R_+\times\R)$. Denoting by  $S_t\bar u=u(t,\cdot)$ these limit functions,  
one obtains a semigroup of weak solutions to (\ref{1})-(\ref{2}), which is contractive w.r.t.~the  distance on $\L^1_{per}$.
\end{theorem}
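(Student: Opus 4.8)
The plan is to establish, in turn, well‑posedness and $\L^1$‑contractivity of the viscous flows $S^{\ve,\delta}_t$ for fixed $\ve,\delta>0$, then compactness of the family $\{u^{\ve,\delta}\}$ as $\ve,\delta\to0$, and finally the identification of the limit with a contractive semigroup of weak solutions. For the first step, fix $\ve,\delta>0$; by the product rule,
\[
\partial_x\Big[\theta_\ve(u_x)f(u)+\bigl(1-\theta_\ve(u_x)\bigr)g(u)\Big]=\Big[\theta_\ve(u_x)f'(u)+\bigl(1-\theta_\ve(u_x)\bigr)g'(u)\Big]u_x-\theta'_\ve(u_x)\bigl(g(u)-f(u)\bigr)u_{xx},
\]
so that \eqref{3} is the conservative, uniformly parabolic equation
\bel{eq:conservative}
u_t+\partial_x\Big[\theta_\ve(u_x)f(u)+\bigl(1-\theta_\ve(u_x)\bigr)g(u)\Big]~=~\delta\,u_{xx}\,,
\eeq
i.e.\ $u_t+c(u,u_x)\,u_x=a(u,u_x)\,u_{xx}$ with $a(u,p)=\delta+\theta'_\ve(p)\bigl(g(u)-f(u)\bigr)\ge\delta>0$ by {\bf (A1)}. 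Quasilinear parabolic theory \cite{Lieb} gives, for smooth periodic data, a unique classical solution; at an interior spatial maximum $u_x=0$ and $u_{xx}\le0$, so $u_t=a\,u_{xx}\le0$ there, whence $\|u(t,\cdot)\|_{\L^\infty}\le\|\bar u\|_{\L^\infty}$ and the solution is global. The crucial property is the $\L^1$‑contraction. Writing \eqref{eq:conservative} as $u_t+\partial_x[G(u,u_x)]=\delta\,u_{xx}$ with $G(u,p)\doteq g(u)+\theta_\ve(p)\bigl(f(u)-g(u)\bigr)$ — note that $G(u,\cdot)$ is non‑increasing, since $\theta'_\ve\ge0$ and $f<g$ — for two smooth solutions $u,v$, with $z=u-v$ and a monotone mollification $\sign_\eta$ of the sign function, one computes
\[
\frac{d}{dt}\int_0^1|z|\,dx~=~-\int_0^1\sign_\eta(z)\,\partial_x\big[G(u,u_x)-G(v,v_x)\big]\,dx+\delta\int_0^1\sign_\eta(z)\,z_{xx}\,dx\,;
\]
the viscous term is $\le0$ because $\sign_\eta'\ge0$, while the first term, integrated by parts, equals $\int_0^1\sign_\eta'(z)\,z_x\big[G(u,u_x)-G(v,v_x)\big]\,dx$, whose integrand tends, as $\eta\to0$ and hence $u\to v$ on $\{\sign_\eta'(z)\neq0\}$, to $\sign_\eta'(z)\,(u_x-v_x)\bigl(G(u,u_x)-G(u,v_x)\bigr)\le0$. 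Thus $t\mapsto\|u(t,\cdot)-v(t,\cdot)\|_{\L^1_{per}}$ is non‑increasing; uniqueness and the semigroup identity follow, and by density the flow extends to a contractive semigroup $S^{\ve,\delta}_t$ on all of $\L^1_{per}(\R)$, which gives the first assertion.

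Next I let $\ve,\delta\to0$. The $\L^\infty$ bound above is already uniform. Since \eqref{eq:conservative} is autonomous and invariant under space translations, the contraction just proved gives, for every $h$,
\[
\big\|\,S^{\ve,\delta}_t\bar u-(S^{\ve,\delta}_t\bar u)(\cdot+h)\big\|_{\L^1_{per}}~=~\big\|\,S^{\ve,\delta}_t\bar u-S^{\ve,\delta}_t\bigl(\bar u(\cdot+h)\bigr)\big\|_{\L^1_{per}}~\le~\big\|\bar u-\bar u(\cdot+h)\big\|_{\L^1_{per}}\,,
\]
a uniform spatial $\L^1$‑modulus of continuity. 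For $\bar u$ of bounded variation, a uniform bound on $\TV\bigl(u^{\ve,\delta}(t,\cdot);[0,1]\bigr)$ follows from the variation‑diminishing nature of the stable problem — the discontinuous flux produces a \emph{sink} at each local maximum and a \emph{source} at each local minimum (cf.\ Fig.~\ref{f:df4}), both of which flatten the profile, while the underlying conservation law is itself variation‑nonincreasing — and from it one deduces a uniform time modulus of continuity, via $u^{\ve,\delta}_t=-\partial_x\big[G(u^{\ve,\delta},u^{\ve,\delta}_x)-\delta\,u^{\ve,\delta}_x\big]$, together with $\delta\,\TV(u^{\ve,\delta}(t,\cdot))\to0$. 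Helly's and Ascoli's theorems then yield, for any sequence $(\ve_n,\delta_n)\to0$, a subsequence with $u^{\ve_n,\delta_n}\to U$ in $C([0,T];\L^1_{per})$, hence in $\L^1_{loc}(\R_+\times\R)$; for general $\bar u\in\L^1_{per}(\R)$ one argues first on BV data and removes the restriction using the contraction.

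It remains to identify $U$. Passing to the limit in the weak form of \eqref{eq:conservative}: since $u^{\ve_n,\delta_n}\to U$ strongly in $\L^1_{loc}$ and boundedly in $\L^\infty$, one has $g(u^{\ve_n,\delta_n})\to g(U)$ and $f(u^{\ve_n,\delta_n})-g(u^{\ve_n,\delta_n})\to f(U)-g(U)$ strongly, while $\theta_{\ve_n}(u^{\ve_n,\delta_n}_x)\rightharpoonup\Theta$ weak‑$*$ in $\L^\infty$ (along a further subsequence) with $0\le\Theta\le1$, and $\delta_n\,u^{\ve_n,\delta_n}_{xx}\to0$ in $\D'$; hence
\[
\dint\Big\{U\,\vp_t+\big[\Theta\,f(U)+(1-\Theta)\,g(U)\big]\vp_x\Big\}\,dx\,dt~=~0\,,
\]
which is \eqref{weaksol}. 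On regions where $U$ is strictly monotone the structure of the approximations forces $u^{\ve_n,\delta_n}_x$ to keep a definite sign with $|u^{\ve_n,\delta_n}_x|\ge\ve_n$, whence $\Theta\in\{0,1\}$ there in accordance with \eqref{18}. The delicate point is the \emph{plateaus}, i.e.\ the intervals over which local extrema are instantaneously spread out by the large viscosity $\theta'_\ve\approx\ve^{-1}$ near $u_x=0$: on these I would show that $U$ coincides with the front‑tracking semigroup trajectory $S_t\bar u$ constructed in the body of the paper, by comparing $u^{\ve,\delta}$ directly with the piecewise constant front‑tracking approximations and using the contractivity of both $S^{\ve,\delta}_t$ and $S_t$ to reduce to piecewise constant data and to single sink/source building blocks. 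This pins down $\Theta$, makes $U$ a weak solution in the sense of Definition~\ref{def:11}, and shows the limit is independent of the subsequence and of the relative rate of $\ve,\delta\to0$; consequently the whole family converges, $S^{\ve,\delta}_t\bar u\to S_t\bar u$ in $\L^1_{loc}$.

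Finally, the limit is a contractive semigroup of weak solutions: $S_0=\mathrm{Id}$ and $S_{t+s}=S_t\circ S_s$ pass to the limit from $S^{\ve,\delta}_t$ — the nested composition $S^{\ve,\delta}_t\circ S^{\ve,\delta}_s$ being controlled by the contractivity of $S^{\ve,\delta}_t$ — continuity of $t\mapsto S_t\bar u$ in $\L^1_{per}$ comes from the uniform time modulus, and $\|S_t\bar u-S_t\bar v\|_{\L^1_{per}}\le\|\bar u-\bar v\|_{\L^1_{per}}$ is inherited in the limit, while each $t\mapsto S_t\bar u$ is a weak solution by the previous step. I expect the main obstacle to be precisely that step: establishing the uniform total‑variation bound in the presence of interacting sources, sinks and (for non‑convex $f,g$) composite shock/rarefaction patterns, and above all controlling the fine structure of $u^{\ve,\delta}$ near points where $u_x$ vanishes, so as to identify the limiting flux on the plateaus and match it with the front‑tracking solution. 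The remaining ingredients — the parabolic well‑posedness, the $\L^1$‑contraction computation, the abstract compactness, and the passage to the limit in the semigroup relations — are comparatively routine.
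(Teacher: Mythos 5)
Your first two steps (parabolic well-posedness, conservation form, $\L^1$-contraction, and the compactness coming from translation invariance plus contraction) are consistent with what the paper uses, and the TV bound in fact follows directly from your own displayed translation estimate rather than from any heuristic about sinks and sources flattening the profile. The genuine gap is in the identification step, which you yourself flag as the ``delicate point'' and then only sketch. Passing to the limit in the weak formulation gives a weak-$*$ limit $\Theta\in[0,1]$ and a function $U$ satisfying \eqref{weaksol}, but this does not determine $U$: the paper's Example~\ref{ex:31} exhibits a weak solution satisfying Definition~\ref{def:11} (even with Kruzhkov-type entropy inequalities) that is \emph{not} the vanishing viscosity limit, so the route ``compactness $+$ weak formulation $+$ sign of $\Theta$ on monotone regions'' cannot by itself yield uniqueness of the subsequential limit, independence of the ratio $\ve/\delta$, or coincidence with the front-tracking semigroup. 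Moreover, your claim that on regions where $U$ is strictly monotone the approximations satisfy $|u^{\ve_n,\delta_n}_x|\ge\ve_n$, so that $\Theta\in\{0,1\}$ there, is asserted without proof; no a priori gradient control of this kind on the actual viscous solutions is available from the estimates you have established.

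What closes this gap in the paper is precisely the content of Proposition~\ref{p:41} and Sections~\ref{sec:5}--\ref{sec:7}: instead of extracting limits of $u^{\ve,\delta}$ and trying to characterize them, one builds, for a given front-tracking approximation $u_\nu$, an explicit $\C^{1,1}$ function $w$ obtained by gluing (a) truncated viscous shock profiles for the modified fluxes $\Tilde f,\Tilde g$ of \eqref{TFG} (Lemma~\ref{l:50}, Corollary~\ref{c:41}, Lemmas~\ref{l:52}--\ref{l:54}), (b) plateau profiles $V^{h,y}$ solving $\theta'_\ve(V')V''=-1/(2h)$ on the intervals of local max/min (Lemmas~\ref{l:41}--\ref{l:56}), and (c) mollified monotone pieces, with shifts and a rescaling ensuring $|w_x|\ge\ve$ away from the plateaus; one then estimates in Section~\ref{sec:7} by how much $w$ fails to satisfy \eqref{3} and invokes the contractivity of $S^{\ve,\delta}$ through the standard error formula to bound $\|S^{\ve,\delta}_t\bar u-S_t\bar u\|_{\L^1_{per}}$ by a quantity that is small uniformly in $(\ve,\delta)$ small. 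Your proposal names the right comparison (``compare $u^{\ve,\delta}$ directly with the front-tracking approximations using contractivity'') but supplies none of this construction or the accompanying error estimates, and without them the convergence of the whole family and the identification $S^{\ve,\delta}_t\bar u\to S_t\bar u$ remain unproved; the remaining assertions of the theorem (semigroup property, contractivity, weak-solution property of the limit) are indeed routine once that core step is in place.
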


The remainder of the paper is organized as follows. In Section~\ref{sec:2} we introduce an algorithm for constructing front tracking 
approximations, where $f,g$ are replaced by piecewise affine
flux functions $f_\nu, g_\nu$. As $\nu\to\infty$, we show that
these approximations converge to a unique limit semigroup $S$ on $\L^1(\R)$, which is contractive
w.r.t.~the $\L^1$ distance.
Further properties of this semigroup are proved in Section~\ref{sec:3}.
In particular, for every trajectory $t\mapsto u(t,\cdot)=S_t \bar u$, we show that the $\L^\infty$ norm and the total variation decay in time. In turn, this allows us to prove that $u$ is 
a weak solution to the Cauchy problem (\ref{1}), (\ref{idata}), according to Definition~\ref{def:11}.

The following three sections are concerned with solutions to the parabolic problem (\ref{3}),
in the spatially periodic case, proving Theorem~\ref{t:11}. 
By standard parabolic theory~\cite{Lieb}, for every $\ve,\delta>0$ the solutions to (\ref{3}) 
yield a contractive semigroup $S^{\ve,\delta}$ on $\L^1_{per}(\R)$.  
As $\ve,\delta\to 0$, to prove that 
the vanishing viscosity limits yield the same semigroup $S$ generated by front tracking approximations,
we follow the approach introduced in \cite{BY}.
The heart of the matter
is described in Proposition~\ref{p:41}.  Given a piecewise constant initial data $\bar u$ 
and $\ve_0>0$,
we consider a front tracking approximate solution $u_\nu$, which is close to the exact solution
$t\mapsto u(t,\cdot)= S_t\bar u$.   We then construct a smooth function $w=w(t,x)$ 
which is at the same
time close to the front tracking approximate solution $u_\nu$ 
and to the viscous solution $S^{\ve,\delta}_t\bar u$.
The construction of $w(t,\cdot) $, described in Section~\ref{sec:6},   is achieved by 
gluing together different types of approximations on different domains.
Namely: (1) intervals where $u_\nu(t,\cdot)$ attains a local maximum or minimum, (2)
neighborhoods of points where $u_\nu$ has a large jump, and (3) the remaining region where $u_\nu$ has only small jumps.  In Section~\ref{sec:7} we estimate by how much
the function $w$ fails to satisfy the parabolic equation (\ref{3}).  Since the semigroup
$S^{\ve,\delta}$ is contractive, by a standard error formula we thus obtain a bound on the distance
$\bigl\|w(t,\cdot)-S^{\ve,\delta}_t\bar u\bigr\|_{\L^1_{per}}$.
Finally, in Section~\ref{sec:8} we observe that, in the non-periodic case, 
solutions $u(t,\cdot)=S_t\bar u\in \L^1(\R)$ obtained as limits of front tracking approximations
can also be recovered as vanishing viscosity limits.  However, in this case as $\ve,\delta\to 0$
the convergence is only achieved in $\L^1_{loc}$.

\begin{remark} {\rm Our analysis shows that the 
limits of front tracking approximations are unique, and
yield exactly the same contractive semigroup on $\L^1_{per}$ as the limits of vanishing viscosity approximations (\ref{3}).  All semigroup trajectories are weak solutions to (\ref{1}) in the sense of 
Definition~\ref{def:11}.  \\
On the other hand, as shown in Example~\ref{ex:31}, there exist
weak solutions of (\ref{1}) which satisfy Definition~\ref{def:11} but cannot be recovered as
vanishing viscosity limits.  Imposing additional entropy conditions does not resolve the uniqueness issue.  We thus leave it as an open question to identify a suitable definition of ``admissible solution"
which characterizes the unique vanishing viscosity limits in the present setting.
For a characterization of the vanishing viscosity limits, in the case of a general $n\times n$ strictly hyperbolic system, see~\cite{BDL}.
}\end{remark}

An extensive literature is currently available on scalar conservation laws with discontinuous 
flux function. A partial list of references is given below.
%
\begin{itemize}
\item 
For results on scalar conservation laws with flux function discontinuous w.r.t.~the space 
and/or time variables, 
we refer to 
\cite{ AMV05,  BGS18, CR05, Diehl95, Gimse93, GR92, GS19, KR, KR95, {Mishra05},Panov09,
Panov10, SeguinVovelle03, WS15, WS16, WS18, Towers2000}. 
See also the survey paper \cite{And15} and references therein. 
\item
For the case where the flux function is discontinuous w.r.t.~the conserved quantity $u$, 
there is also a rich literature, see for example~\cite{ACnew, BGMS11, BGS13, BGS17, Gimse93, Panov23, Towers2020}.
\item  Conservation law models
where the flux function depends on the gradient $u_x$
have been considered  for hysteretic traffic flow models
in \cite{CF, Fan24}.  
%

In those models, the acceleration and  the deceleration modes can be connected either by 
a family of ``scanning curves'',  or
by using two families of fluxes.

\end{itemize} 

We remark that  our model \eqref{1}-\eqref{2}  can be  applied to traffic flow, 
but it remains meaningful for any couple of flux functions $f,g$, 
in an entirely general setting.

\v
\section{Front tracking approximations}
\label{sec:2}
\setcounter{equation}{0}
In this section, for general initial data $\bar u\in \L^1(\R)$,  we construct a family of piecewise constant
approximations to the 
problem (\ref{1})-(\ref{2}),  under the 
assumption \textbf{(A1)}.
We will show that the limits of these front tracking approximations yield a  semigroup
which is contractive w.r.t.~the $\L^1$ distance.  By the  finite propagation speed, the same result
holds in the spatially periodic case.
The construction follows Chapter 6 in \cite{Bbook}, with a modification
which takes into account the presence of two fluxes.

We first consider initial data (\ref{idata}),
assuming that $\bar u$ has bounded support and bounded total variation.
\begi
\item[(i)] For any integer $\nu\geq 1$, let $f_\nu$, $g_\nu$  be the piecewise affine functions that coincide with $f$, $g$ at the points $\{u_j\doteq 2^{-\nu}{j}\,,\ j\in\Z\}$. That means: 
\bel{fnudef}f_\nu(u)~=~ 2^\nu\Big[(u-u_j) f(u_{j+1}) + (u_{j+1}-u) f(u_j)\Big]\qquad
\hbox{if}~~ u\in [u_j, u_{j+1}],\eeq
 and similarly for $g_\nu$. Notice that (\ref{eq:f-le-g}) implies 
\textcolor{black}{$g_\nu(u)- f_\nu(u)\ge c_0$}
 for all $u\in\R$.

\item[(ii)]  Let $\{\bar u_\nu\}_{\nu\geq 1}$ be a sequence of piecewise constant 
initial data that approximate $\bar u$. More precisely, we assume
\begin{align}
\label{bunu}\TV \{\bar u_\nu\}\,\le\, \TV \{\bar u\}\,,\quad&\qquad  \|\bar u_\nu\|_{\L^\infty}\,\le\, \|\bar u\|_{\L^\infty}\,,\\
\label{1lim} 
\bar u_\nu(x)\in 2^{-\nu}\Z~~\hbox{for all} ~x\in\R,
~\quad&\qquad \| \bar u_\nu-\bar u\|_{\L^1}\to 0\quad\hbox{as}\quad \nu\to\infty.
\end{align}
\endi

At each point $\bar x$ where $\bar u_\nu$ has a jump, the Riemann problem is solved 
in two different ways, using the flux $f_\nu$ or $g_\nu$ according to the sign of $\Delta {\bar u}_\nu ({\bar x})=\bar u_\nu(\bar x+) - \bar u_\nu(\bar x-)$.

If $\Delta {\bar u}_\nu ({\bar x})>0$, we solve the Riemann problem by the piecewise constant entropy solution corresponding to the flux $f_\nu$, as in \cite[Sect.6.1]{Bbook}. In other words, $u_\nu(x,t)$ is locally given by the solution of
\begin{equation*}
    u_t + f_\nu(u)_x~=~0\,,\qquad \mbox{with}\quad 
    u(x,0) = \begin{cases} 
    u^- = {\bar u}_\nu(\bar x-)  &x<\bar x\,, \\
    u^+ = {\bar u}_\nu(\bar x+) & x>\bar x\,.
    \end{cases}
\end{equation*}
Similarly, if $\Delta {\bar u}_\nu ({\bar x})<0$, then the flux $g_\nu$ is employed.

The above construction would yield a piecewise constant solution 
$t\mapsto u_\nu(t,\cdot)$,
with jumps at points $x_\alpha(t)$, $\alpha=1,2,\ldots, N$, traveling with constant speed, 
and  $u_\nu (t,x)\doteq \Hat u_\alpha(t)$ on intervals $\bigl(x_\alpha(t),  x_{\alpha+1}(t)\bigr)$. 
This construction
must be modified on intervals
where $\Hat u_\alpha(t)$ attains a local maximum or a local minimum. Indeed, along this interval the flux switches
between $f$ and $g$. This determines a positive source at 
local minima and a negative source at local maxima.

CASE 1: a local maximum. This means $\Hat u_\alpha>\max\bigl\{ \Hat u_{\alpha-1},\Hat u_{\alpha+1}\bigr\}$. In this case, the value $\Hat u_\alpha(t)$ of  $u_\nu(t,x)$ over the 
interval $x\in \bigl(x_\alpha(t),  x_{\alpha+1}(t)\bigr)$ will no longer be constant in time, 
but satisfy the ODE
\begin{equation}\label{24}
 \frac{d}{ dt}  \Hat u_\alpha(t)~  =~  \frac{ f_\nu
 \bigl( \Hat u_\alpha(t)\bigr)-g_\nu\bigl( \Hat u_\alpha(t)\bigr) 
 }{x_{\alpha+1}(t) - x_\alpha(t)}~ \le ~0\,.\end{equation}
Notice that in this case the speeds $\dot x_\alpha(t)$ and 
$\dot x_{\alpha+1}(t)$ of the jumps are no longer constant in time, either. 
Indeed, they are determined by the 
Rankine-Hugoniot
equations
\bel{spe1}\dot x_{\alpha} ~=~ \frac{  f_\nu(\Hat u_\alpha)  - f_\nu(\Hat u_{\alpha-1})  }{\Hat u_\alpha-\Hat u_{\alpha-1}  } \,,\qquad
\dot x_{\alpha+1} ~=~ \frac{g_\nu(\Hat u_{\alpha+1})  - g_\nu(\Hat u_\alpha) }{\Hat u_{\alpha+1} - \Hat u_\alpha}\,,\eeq
where the right hand sides change in time, because of (\ref{24}).
Here and throughout the following, an upper dot denotes a derivative w.r.t.~time.

CASE 2: a local minimum. 
 This means $\Hat u_\alpha<\min\bigl\{ \Hat u_{\alpha-1},\Hat u_{\alpha+1}\bigr\}$. In this case, the value $\Hat u_\alpha(t)$ of  $u_\nu(t,x)$ over the 
interval $x\in \bigl(x_\alpha(t),  x_{\alpha+1}(t)\bigr)$ will  satisfy the ODE
\begin{equation}\label{26}
 \frac{d}{ dt}  \Hat u_\alpha(t)~  =~  \frac{ g_\nu\bigl( \Hat u_\alpha(t)\bigr)- f_\nu
 \bigl( \Hat u_\alpha(t)\bigr)
 }{x_{\alpha+1}(t) - x_\alpha(t)}~ \ge ~0\,.\end{equation}
In turn, the speeds of the jumps $\dot x_\alpha(t)$ and 
$\dot x_{\alpha+1}(t)$ are  determined by
\bel{spe2}\dot x_{\alpha} ~=~ \frac{ 
g_\nu(\Hat u_\alpha)- g_\nu(\Hat u_{\alpha-1})  }{\Hat u_\alpha-\Hat u_{\alpha-1}  } \,,\qquad
\dot x_{\alpha+1}~=~ \frac{f_\nu(\Hat u_{\alpha+1})  -  f_\nu(\Hat u_\alpha) }{\Hat u_{\alpha+1} - \Hat u_\alpha}\,.\eeq

\begin{figure}[htbp]
\begin{center}
\resizebox{.55\textwidth}{!}{
\begin{picture}(0,0)%
\includegraphics{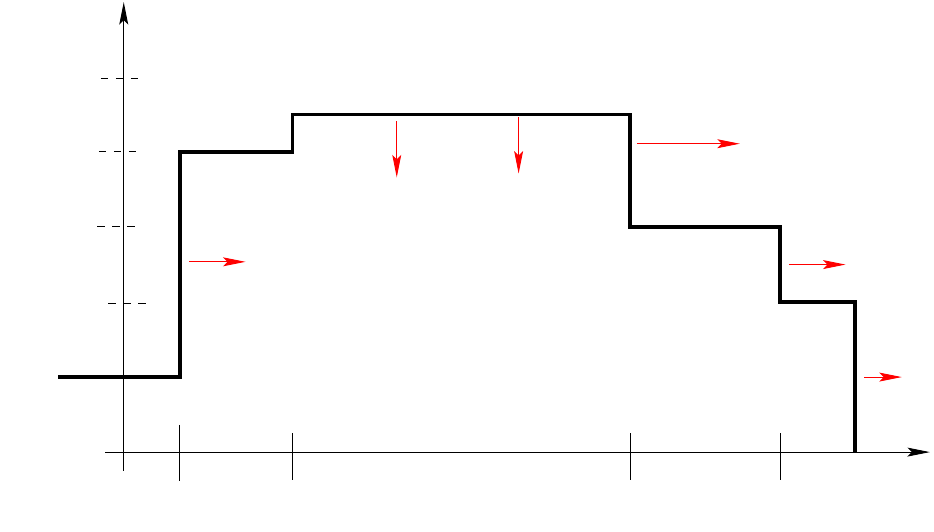}%
\end{picture}%
\setlength{\unitlength}{3947sp}%
\begin{picture}(7452,4246)(3361,-5795)
\put(4726,-2611){\makebox(0,0)[lb]{\smash{\fontsize{20}{24}\usefont{T1}{ptm}{m}{n}{\color[rgb]{0,0,0}$\hat u_{\alpha-1}$}%
}}}
\put(8101,-5686){\makebox(0,0)[lb]{\smash{\fontsize{20}{24}\usefont{T1}{ptm}{m}{n}{\color[rgb]{0,0,0}$x_{\alpha+1}$}%
}}}
\put(4501,-5686){\makebox(0,0)[lb]{\smash{\fontsize{20}{24}\usefont{T1}{ptm}{m}{n}{\color[rgb]{0,0,0}$x_{\alpha-1}$}%
}}}
\put(9301,-5686){\makebox(0,0)[lb]{\smash{\fontsize{20}{24}\usefont{T1}{ptm}{m}{n}{\color[rgb]{0,0,0}$x_{\alpha+2}$}%
}}}
\put(8776,-3211){\makebox(0,0)[lb]{\smash{\fontsize{20}{24}\usefont{T1}{ptm}{m}{n}{\color[rgb]{0,0,0}$\hat u_{\alpha+1}$}%
}}}
\put(6526,-2236){\makebox(0,0)[lb]{\smash{\fontsize{20}{24}\usefont{T1}{ptm}{m}{n}{\color[rgb]{0,0,0}$\hat u_\alpha(t)$}%
}}}
\put(3376,-2911){\makebox(0,0)[lb]{\smash{\fontsize{20}{24}\usefont{T1}{ptm}{m}{n}{\color[rgb]{0,0,0}$2^{-\nu}j$}%
}}}
\put(5551,-5686){\makebox(0,0)[lb]{\smash{\fontsize{20}{24}\usefont{T1}{ptm}{m}{n}{\color[rgb]{0,0,0}$x_\alpha$}%
}}}
\end{picture}%
}
\caption{\small  A front tracking solution to the conservation law (\ref{1})-(\ref{2}) with two fluxes.   Here the  values  $\Hat u_{\alpha-1}$ and $\Hat u_{\alpha+1}$ are integer multiples 
of $ 2^{-\nu}$, and remain constant in time.
On the other hand $\Hat u_\alpha(t)$ is a local maximum, and decreases continuously in time.
At some time $\tau$, we have the equality $\Hat u_\alpha(\tau) = \Hat u_{\alpha-1}$.
The algorithm is then restarted, removing the front at $x_{\alpha}$.}
\label{f:ag60}
\end{center}
\end{figure}

This construction can be prolonged in time up to the first time $\tau$ when 
one of the following cases occurs.
\v
{\bf Case (i)} Two or more adjacent local max/min either merge into a single local max/min or cancel each other so the local max/min is removed.

{\bf Case (ii)}  At an endpoint of an interval where $u_\nu$ attains a local max or a local min,
the jump in $u_\nu$ vanishes,  or the wave-front no longer satisfies Liu admissibility condition for $t>\tau$, due to the change in the value $\Hat u_\alpha(t)$.

{\bf Case (iii)} Two or more fronts interact. 
\v

In all three cases, since $u_\nu(\tau,\cdot)$ is still a piecewise constant  function with a uniformly bounded number of  jumps, the algorithm can be restarted. 
It is clear that  the number $N(t)$ of intervals where the front tracking solution 
$u_\nu(t,\cdot)$ attains
a local maximum or a local minimum does not increase in time.
Moreover, the total variation of $u_\nu(t,\cdot)$ does not increase across any interaction.
Calling ${\cal J}(t)$ the total number of jumps in $u_\nu(t,\cdot)$, 
we thus have the bound
\begin{equation}\label{frontsE}
{\cal J}(t)~\leq~2 N(t) + 2^\nu\cdot \TV\bigl\{ u_\nu(t,\cdot)\bigr\}~\leq~2 N(0)+ 
2^\nu\cdot\TV\{\bar u\}. 
\end{equation}

We claim that  the total number of restarting times of types (i)--(iii)  is bounded. 

As a preliminary we observe that, in a standard front tracking approximation \cite{Bbook,Daf}, 
upward jumps can interact with downward jumps, resulting in a cancellation.   
This never happens in the present setting. Indeed (see Fig.~\ref{f:df52})  by (\ref{spe1}), (\ref{spe2}), 
the rate at which  local maxima $u_\alpha(t) $ decrease (and local minima increase) goes to infinity
as the length of the interval $\bigl[x_{\alpha}(t),\,x_{\alpha+1}(t)\bigr] $ approaches zero. 
As a consequence, one of the jumps at $x_{\alpha}(t)$ or at $ x_{\alpha+1}(t)$ must vanish
before the interval shrinks to a single point.  The next lemma states this fact more precisely.

\begin{figure}[ht]
\begin{center}
\resizebox{.4\textwidth}{!}{
\begin{picture}(0,0)%
\includegraphics{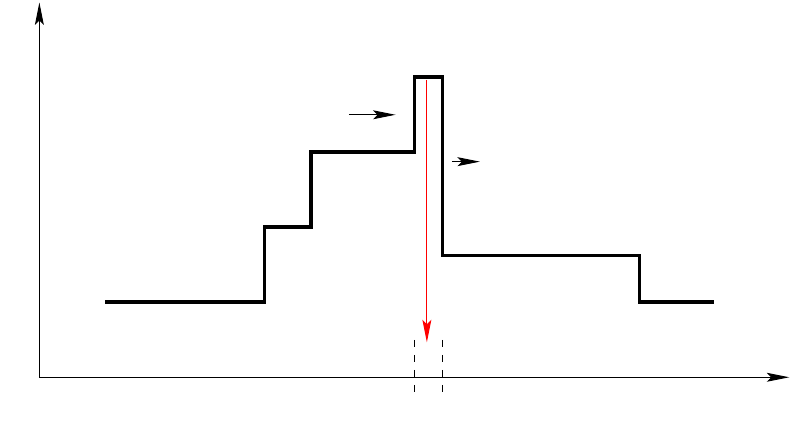}%
\end{picture}%
\setlength{\unitlength}{3947sp}%
\begin{picture}(6327,3511)(-2114,2140)
\put(751,2264){\makebox(0,0)[lb]{\smash{\fontsize{24}{28.8}\usefont{T1}{ptm}{m}{n}{\color[rgb]{0,0,0}$a(t)$}%
}}}
\put(3751,2339){\makebox(0,0)[lb]{\smash{\fontsize{24}{28.8}\usefont{T1}{ptm}{m}{n}{\color[rgb]{0,0,0}$x$}%
}}}
\put(1201,5114){\makebox(0,0)[lb]{\smash{\fontsize{24}{28.8}\usefont{T1}{ptm}{m}{n}{\color[rgb]{0,0,0}$\hat u(t)$}%
}}}
\put(-2099,5189){\makebox(0,0)[lb]{\smash{\fontsize{24}{28.8}\usefont{T1}{ptm}{m}{n}{\color[rgb]{0,0,0}$u$}%
}}}
\put(1426,2264){\makebox(0,0)[lb]{\smash{\fontsize{24}{28.8}\usefont{T1}{ptm}{m}{n}{\color[rgb]{0,0,0}$b(t)$}%
}}}
\end{picture}%
}
\caption{\small An illustration of Lemma~\ref{l:21}. As the length $\ell(t)$ of the interval $\bigl[a(t), \, b(t)\bigr]$ 
shrinks to zero, the value $\Hat u(t)$ of the local maximum decreases very fast, as in (\ref{shrin}).
}
\label{f:df52}
\end{center}
\end{figure}

\begin{lemma}\label{l:21}
Let $f,g$ satisfy the assumptions {\bf (A1)}, and consider  a piecewise constant initial data
$\bar u\in \L^1(\R)$. For a given $\nu\geq 1$, let  $u_\nu(t,\cdot) = S_t^\nu\bar u$ be 
the corresponding front tracking solution. 

For $t\ge 0$, let $\ell(t)$ denote the shortest length among all bounded intervals where $u(t,\cdot)$ attains a local maximum or minimum, 
Then, for all $t\geq 0$, the minimum length $\ell(t)$ remains uniformly positive.
%
\end{lemma}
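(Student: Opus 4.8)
The plan is to exhibit a positive constant $\rho$, depending only on $f,g$ and $\bar u$, such that $\ell(t)\ge\rho$ for every $t\ge 0$. Two ingredients will be combined: a differential inequality controlling $\ell(t)$ during the life of a single interval carrying a local extremum, and a bookkeeping argument showing that such intervals are never produced ``short'' at a restart time. As preliminary uniform bounds, set $M\doteq\|\bar u\|_{\L^\infty}$; since the construction does not enlarge the range of $u_\nu$, all values $\Hat u_\alpha(t)$ lie in $[-M,M]$. Because $f_\nu,g_\nu$ are piecewise affine interpolants of the $\C^2$ functions $f,g$, their slopes on $[-M,M]$ are bounded by $\Lambda\doteq\max\{|f'(u)|,|g'(u)|\ :\ |u|\le M\}$; consequently every front travels with speed $|\dot x_\alpha|\le\Lambda$, and for an interval $\bigl(x_\alpha(t),x_{\alpha+1}(t)\bigr)$ carrying a local extremum its length $\ell$ satisfies $|\dot\ell|=|\dot x_{\alpha+1}-\dot x_\alpha|\le 2\Lambda$.

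\emph{One extremum interval.} Suppose an interval $I$ is a local maximum on a time interval, with value $\Hat u(t)$ and length $\ell(t)$. By \eqref{24} together with \eqref{eq:f-le-g} (which gives $g_\nu-f_\nu\ge c_0$), one has $\dot{\Hat u}=\bigl(f_\nu(\Hat u)-g_\nu(\Hat u)\bigr)/\ell\le -c_0/\ell<0$, while $|\dot\ell|\le 2\Lambda$. Hence
\[
\Bigl|\frac{d}{dt}\ln\ell\Bigr|\ =\ \frac{|\dot\ell|}{\ell}\ \le\ \frac{2\Lambda}{\ell}\ \le\ -\frac{2\Lambda}{c_0}\,\dot{\Hat u}\,,
\qquad\hbox{so}\qquad
\frac{d}{dt}\ln\ell\ \ge\ \frac{2\Lambda}{c_0}\,\dot{\Hat u}\,,
\]
and integrating from $s$ to $t$ gives $\ell(t)\ge\ell(s)\,\exp\bigl(\tfrac{2\Lambda}{c_0}(\Hat u(t)-\Hat u(s))\bigr)$. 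Since $\Hat u$ is nonincreasing and confined to $[-M,M]$ as long as $I$ is a local maximum, this already yields $\ell(t)\ge\ell(s)\,e^{-4\Lambda M/c_0}>0$; in particular $\ell$ cannot vanish during the life of $I$, which is the precise form of the heuristic stated before the lemma (one of the endpoint jumps must vanish first). The symmetric computation, based on \eqref{26} and $\dot{\Hat u}\ge c_0/\ell>0$, applies to local minima.

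\emph{From one interval to all times.} To make the bound uniform in $t$, trace a given local-maximum interval backwards through the restart times. Inspecting the three restart rules one checks that: (a) at a Liu-admissibility restart, or at a front interaction that does not collapse $I$ itself, $\ell(t)$ is continuous; (b) when a jump at an endpoint of $I$ disappears, $I$ is replaced by the strictly longer interval obtained by merging $I$ with one or more neighbours, and this merged interval, when it is again a local maximum, carries the same value $\Hat u$, which then keeps decreasing; (c) a local-maximum interval is created in no other way --- a monotone interval stays monotone, and a local-minimum interval that loses an endpoint jump turns into a monotone or a local-minimum interval, never into a local maximum. Here one uses the stability sign $f<g$ (so maxima only decrease, minima only increase) together with the elementary fact that two adjacent constancy intervals cannot both be local maxima, nor both local minima. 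It follows that every local-maximum interval present at time $t$ belongs to a ``lineage'' going back to a constancy interval of $\bar u$, along which $\Hat u$ is monotone --- hence of total variation at most $2M$ --- and along which $\ell$ is right-continuous with only upward jumps, obeying the differential inequality of the previous step between consecutive jumps. Summing that inequality over the smooth pieces and discarding the nonnegative jumps of $\ln\ell$, one obtains, with $L_0>0$ the smallest length among the (finitely many) bounded constancy intervals of $\bar u$,
\[
\ell(t)\ \ge\ L_0\,\exp\!\Bigl(-\frac{2\Lambda}{c_0}\,\bigl|\Hat u(t)-\Hat u(0)\bigr|\Bigr)\ \ge\ L_0\,e^{-4\Lambda M/c_0}\ >\ 0\,,
\]
and the same bound holds for local-minimum intervals. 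Taking the minimum over all bounded extremum intervals at time $t$ gives $\ell(t)\ge L_0\,e^{-4\Lambda M/c_0}$ for every $t$ (and this uniform lower bound in turn rules out a finite-time accumulation of restarts, so the solution and the estimate are indeed defined for all $t\ge 0$).

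\emph{Main obstacle.} The differential inequality in the second step is elementary. The real work is the third step: the claim that local extremum intervals are never created ``short'' at a restart but only by merging already existing ones, and that the value along a lineage is genuinely monotone, so that its variation stays globally bounded by $2M$ instead of accumulating over successive phases. This requires a careful, though routine, case analysis of the restart rules (i)--(iii), and it is precisely here that the hypothesis $f<g$ enters in an essential way.
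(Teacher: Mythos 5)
Your proof is correct, but it combines the two key facts differently from the paper. Both arguments rest on the same mechanism: the endpoints of an extremum interval move with speed bounded by $\lambda^\dagger$ (your $\Lambda$), while the value $\Hat u$ on the interval drifts at rate at least $c_0/\ell$ and can drift by at most $2\|\bar u\|_{\L^\infty}$ in total; and both implicitly require the bookkeeping you spell out in your third step (an extremum interval, traced through restarts, keeps a continuous, monotone value and its length only jumps upward at mergers) — the paper simply writes ``call $[a(t),b(t)]$ the corresponding interval'' and leaves this case analysis tacit, so making it explicit is a genuine, if routine, addition rather than a gap. Where you diverge is in the integration: you integrate the logarithmic inequality $\frac{d}{dt}\ln\ell\ge \frac{2\Lambda}{c_0}\dot{\Hat u}$ forward along the lineage from $t=0$, obtaining the single uniform bound $\ell(t)\ge L_0\,e^{-4\Lambda M/c_0}$ for all $t$, which proves the lemma cleanly in one stroke. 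The paper instead splits time: a trivial finite-speed bound $\ell(t)\ge\ell_0/2$ for $t\le \ell_0/4\lambda^\dagger$, and for later times a backward-in-time comparison $\ell(\tau)\le\ell(t)+2\lambda^\dagger(t-\tau)$ inserted into $\dot{\Hat u}\le -c_0/\ell(\tau)$, which after integration gives $\ell(t)\ge 2\lambda^\dagger(t-t_0)\big/\bigl(\exp(4\lambda^\dagger\|\bar u\|_{\L^\infty}/c_0)-1\bigr)$. The paper's version is weaker near $t_0$ (it needs the short-time bound to patch the seam) but strictly stronger for large times: it shows $\ell(t)$ grows at least linearly in $t$, a quantitative fact the paper reuses later (Example~\ref{ex:31} cites (\ref{eqM2}) to argue that the flat interval at a maximum has length of order $\tau$). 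So your argument fully establishes the lemma as stated, but if you want the linear-growth refinement you would still need the paper's backward comparison (or an easy variant of your estimate exploiting that $\Hat u$ must keep decreasing at rate $\ge c_0/\ell$).
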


\begin{proof} 
{\bf 1.} \textcolor{black}{By construction, the $L^\infty$ norm of the solution $u_\nu(t,\cdot)$ does not increase in time. Recalling (\ref{bunu}), 
an upper bound on all wave speeds is given by}:
\bel{lmax} \lambda^\dagger~\doteq~\max\Big\{ \bigl|f'(\omega)\bigr|\,, ~\bigl|g'(\omega)\bigr|\,;~~|\omega|\leq\|\bar u\|_{\L^\infty}
\Big\}\,.
\eeq
Moreover, set $\ell_0\doteq \ell(0)>0$.

Let $\bar u$ attain a local max on an interval $[a_0,b_0]$, with $b_0-a_0 \ge  \ell_0$.   At time $t>0$,  call
 $\bigl[a(t), b(t)\bigr]$ the corresponding interval where the front tracking solution $u_\nu(t,\cdot)$ attains a local max.
 For $t\leq \ell_0/4\lambda^\dagger$, since all jumps travel with speed bounded by $\lambda^\dagger$, we have the trivial bound
\bel{eqM1}
\ell(t)~= ~b(t)-a(t)~\geq~\ell_0-2\lambda^\dagger t~\geq~{\ell_0\over 2}\qquad\qquad 0\leq t\leq ~{\ell_0\over 4\lambda^\dagger}\,
 .\eeq
\v
{\bf 2.} Next, consider a time $t > t_0\doteq {\ell_0/ 4\lambda^\dagger}$.
By finite propagation speed, for $t_0<\tau<t$ the length $\ell(\tau)$ satisfies
$$\ell(\tau) ~\leq ~\ell(t) + 2\lambda^\dagger(t-\tau).$$
Calling $\Hat u(\tau)\doteq u_\nu(\tau,x)$ the value of the front tracking solution on the interval 
$ \bigl[a(\tau), \,  b(\tau)\bigr]$ where it achieves the local maximum (see Fig.~\ref{f:df52}),
by (\ref{24}) it follows
\bel{shrin}\frac{d}{d\tau} \Hat u(\tau)~\leq~- \frac{c_0}{ \ell(\tau)}~\leq~-\frac{c_0}{ \ell (t) + 2\lambda^\dagger (t-\tau)}\,.
\eeq
A straightforward integration yields
\begin{align*}
2\|\bar u\|_{\L^\infty} &~\ge~   \Hat u(t_0) -\Hat u(t)
~\ge~  \int_{t_0} ^t \frac{c_0}{ \ell(t) + 2\lambda^\dagger (t-\tau)}\, d\tau 
~=~    \frac{c_0}{2\lambda^\dagger} \ln\left(1+ \frac{2\lambda^\dagger (t-t_0)}{ \ell(t)}\right). 
\end{align*}
This leads to the bound 
\begin{equation}\label{eqM2}
\ell(t) ~\ge~ \frac{2\lambda^\dagger (t-t_0)}{\exp(4\lambda^\dagger \|\bar u\|_{\L^\infty}/c_0) -1}\qquad\qquad
\forall  t\geq ~{\ell_0\over 4\lambda^\dagger}\,.
\end{equation}
%
%
Together, (\ref{eqM1}) and (\ref{eqM2}) yield the lemma in the case of local max.
The size of intervals where $u(t,\cdot)$ attains a local minimum is estimated in an entirely similar way.
\end{proof}

We are now ready to prove that the front-tracking approximations $u_\nu$ constructed by our algorithm 
are well defined.  

\begin{lemma}\label{l:22} For any piecewise constant initial data $\bar u\in \L^1(\R)$, let $u_\nu(t,x)$
be the corresponding front tracking approximation, defined
as in (\ref{fnudef})-(\ref{spe2}).  Then the total number of all restarting times $\tau\in \,]0,+\infty[\,$
of the above types (i)--(iii) is bounded.
\end{lemma}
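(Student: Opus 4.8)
The plan is to show that each of the three types of restarting times can occur only finitely often, by introducing a suitable integer-valued functional that strictly decreases at each such time and never increases in between. First I would recall the bound \eqref{frontsE}: the number of jumps $\J(t)$ is controlled by $2N(t)+2^\nu\TV\{\bar u\}$, where $N(t)$ is the number of intervals on which $u_\nu(t,\cdot)$ attains a local max or min. Since $u_\nu$ takes values in the fixed grid $2^{-\nu}\Z$ and $\|u_\nu(t,\cdot)\|_{\L^\infty}\le\|\bar u\|_{\L^\infty}$, the quantity $2^\nu\TV\{u_\nu(t,\cdot)\}$ is a nonnegative integer, bounded above uniformly in $t$. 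Thus a natural first Lyapunov functional is the pair $\bigl(N(t),\,2^\nu\TV\{u_\nu(t,\cdot)\}\bigr)$ ordered lexicographically, or simply their sum: $N(t)$ never increases (as already noted), and at every restarting time of type (i) either two max/min intervals merge or one is removed, so $N$ drops by at least one; and at a type (iii) interaction (two fronts meeting with no max/min involved) the total variation strictly decreases, so $2^\nu\TV$ drops by at least one; hence only finitely many restarting times of types (i) and (iii) can occur in total.

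The remaining work is to bound the number of type (ii) restarts, where at an endpoint of a max/min interval the jump vanishes (or Liu-admissibility fails). The key point, already isolated in Lemma~\ref{l:21}, is that between consecutive restarting times each max/min interval has length bounded below by a uniform constant $\ell_* > 0$, and the value $\Hat u_\alpha(t)$ of the local extremum is monotone — decreasing at a max, increasing at a min — with rate bounded below in absolute value: from \eqref{24} and $\ell(t)\ge\ell_*$ we get $|\frac{d}{dt}\Hat u_\alpha|\ge c_0/\ell_{\max}$, where $\ell_{\max}$ is an upper bound for interval lengths (e.g.\ from finite speed, bounded support, or periodicity). Since $\Hat u_\alpha$ stays in $[-\|\bar u\|_{\L^\infty},\|\bar u\|_{\L^\infty}]$, a single max/min interval can exist for total time at most $2\|\bar u\|_{\L^\infty}\ell_{\max}/c_0$. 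More to the point, each type (ii) event at a given interval consumes a definite amount of ``room'': when the jump at an endpoint vanishes, the value $\Hat u_\alpha$ has moved to equal the neighboring value $\Hat u_{\alpha\pm1}$, which lies on the grid $2^{-\nu}\Z$; and since by monotonicity $\Hat u_\alpha$ cannot return to a previously attained grid value, the number of type (ii) events affecting any fixed surviving max/min interval is bounded by the number of grid points in $[-\|\bar u\|_{\L^\infty},\|\bar u\|_{\L^\infty}]$, i.e.\ by $2^{\nu+1}\|\bar u\|_{\L^\infty}+1$. Combined with the fact that the number of distinct max/min intervals ever created is itself bounded — new max/min intervals can only be created at interaction times, whose number we have already controlled, and each interaction creates boundedly many — we conclude that type (ii) restarts are finite in number.

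The main obstacle I expect is the bookkeeping needed to make ``the number of distinct max/min intervals ever created is bounded'' precise: one must verify that type (iii) interactions (and mergings in type (i)) cannot generate max/min intervals indefinitely, which requires checking that each such event produces at most a fixed number of new local extrema and that $N(t)$ together with $2^\nu\TV$ genuinely controls this creation. A clean way to handle it is to observe that across any restarting time the \emph{sum} $N(t)+2^\nu\TV\{u_\nu(t,\cdot)\}$ never increases (all the Riemann solvers used are the standard entropy solvers for $f_\nu$ or $g_\nu$, for which TV is nonincreasing, and the ODE evolution \eqref{24}, \eqref{26} only moves $\Hat u_\alpha$ monotonically toward a neighbor, again not increasing TV), and it strictly decreases at every type (i) and type (iii) restart; therefore there are finitely many of those, hence finitely many time intervals between them, and on each such interval only finitely many type (ii) events occur by the grid-exhaustion argument above. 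Summing over the finitely many intervals gives the claim. I would then close by noting that once all three counts are finite, the algorithm is globally well defined, as the construction between restarting times is governed by the smooth ODE system \eqref{24}–\eqref{spe2} with Lipschitz right-hand sides (using $\ell(t)\ge\ell_* > 0$), proving Lemma~\ref{l:22}.
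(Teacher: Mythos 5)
Your proposal contains a genuine gap. The claim that ``at a type~(iii) interaction the total variation strictly decreases, so $2^\nu\TV$ drops by at least one'' is false in the present setting. As the paper points out explicitly just before Lemma~\ref{l:21}, the construction is arranged so that upward and downward jumps never collide (cancellation never occurs, because $\ell(t)$ stays bounded away from zero). Consequently every type~(iii) interaction is between fronts of the same sign, and across such an interaction the total variation is \emph{conserved}, not decreased. Your Lyapunov functional $N(t)+2^\nu\TV\{u_\nu(t,\cdot)\}$ therefore does not strictly decrease at type~(iii) restarts, and the conclusion ``hence only finitely many restarting times of types~(i) and~(iii)'' does not follow. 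The quantity that does drop by at least one at a type~(iii) merging is the \emph{number of fronts} $\J(t)$, but that quantity can \emph{increase} at type~(ii) restarts, precisely because a formerly admissible shock at the endpoint of a max/min interval can split into several smaller fronts as $\Hat u_\alpha(t)$ drifts (this is the non-convex phenomenon shown in Figures~\ref{f:df51}--\ref{f:df50}). So you cannot first declare type~(iii) finite and then handle type~(ii) on the resulting intervals; the two counts are coupled.

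The paper resolves this by working \emph{between two consecutive local extrema} of $u_\nu$ and exploiting the monotonicity of $\Hat u_\alpha(t)$ there: it introduces a finite chain of key times $t_1<t_2<\cdots$ at which the shock adjacent to the extremum connects to a strictly lower intermediate state $u_{i(k)}$, shows the number of new fronts born in $[t_k,t_{k+1}[\,$ is $\le 2^\nu\bigl(\Hat u_\alpha(t_k)-u_{i(k)}\bigr)$, and notes that every other interaction in that window strictly decreases the front count. Summing over the finitely many $k$ gives the bound. Your grid-exhaustion idea for type~(ii) is sound and in the same spirit as the key-time decomposition (both rely on $\Hat u_\alpha$ being monotone and living on the $2^{-\nu}\Z$ grid), but to make it close you would have to track simultaneously, over each window between consecutive type~(ii) events, the number of fronts created by break-up and the number destroyed by mergings, rather than appeal to a TV drop that does not occur. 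A minor additional confusion: you remark that new max/min intervals ``can only be created at interaction times''; in fact $N(t)$ is non-increasing and no new max/min intervals are ever created, which if anything simplifies that part of the bookkeeping.
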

\v
\begin{proof}
{\bf 1.} Calling $N(t)$ the number of intervals where $u_\nu(t,\cdot)$ attains a local max or min,
it is clear that  Case (i) can occur at most $N(0)$ times.

\begin{figure}[ht]
\begin{center}
\resizebox{.55\textwidth}{!}{
\begin{picture}(0,0)%
\includegraphics{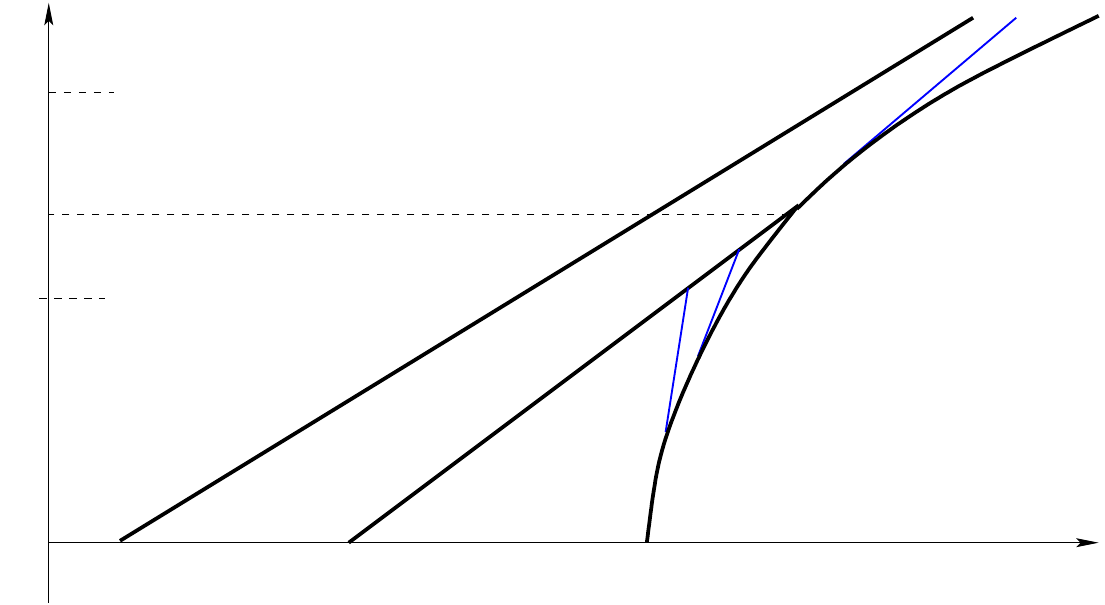}%
\end{picture}%
\setlength{\unitlength}{3947sp}%
\begin{picture}(8823,4824)(-389,347)
\put(-374,764){\makebox(0,0)[lb]{\smash{\fontsize{24}{28.8}\usefont{T1}{ptm}{m}{n}{\color[rgb]{0,0,0}$t_1$}%
}}}
\put(5251,1814){\makebox(0,0)[lb]{\smash{\fontsize{24}{28.8}\usefont{T1}{ptm}{m}{n}{\color[rgb]{0,0,0}$\hat u_\alpha(t)$}%
}}}
\put(4576,539){\makebox(0,0)[lb]{\smash{\fontsize{24}{28.8}\usefont{T1}{ptm}{m}{n}{\color[rgb]{0,0,0}$x_\alpha$}%
}}}
\put(3301,1139){\makebox(0,0)[lb]{\smash{\fontsize{24}{28.8}\usefont{T1}{ptm}{m}{n}{\color[rgb]{0,0,0}$u_1$}%
}}}
\put(2626,1589){\makebox(0,0)[lb]{\smash{\fontsize{24}{28.8}\usefont{T1}{ptm}{m}{n}{\color[rgb]{0,0,0}$u_2$}%
}}}
\put(-374,4289){\makebox(0,0)[lb]{\smash{\fontsize{24}{28.8}\usefont{T1}{ptm}{m}{n}{\color[rgb]{0,0,0}$t''$}%
}}}
\put(-374,3389){\makebox(0,0)[lb]{\smash{\fontsize{24}{28.8}\usefont{T1}{ptm}{m}{n}{\color[rgb]{0,0,0}$t_2$}%
}}}
\put(-374,2639){\makebox(0,0)[lb]{\smash{\fontsize{24}{28.8}\usefont{T1}{ptm}{m}{n}{\color[rgb]{0,0,0}$t'$}%
}}}
\put(451,539){\makebox(0,0)[lb]{\smash{\fontsize{24}{28.8}\usefont{T1}{ptm}{m}{n}{\color[rgb]{0,0,0}$x_2$}%
}}}
\put(2251,539){\makebox(0,0)[lb]{\smash{\fontsize{24}{28.8}\usefont{T1}{ptm}{m}{n}{\color[rgb]{0,0,0}$x_1$}%
}}}
\put(7801,539){\makebox(0,0)[lb]{\smash{\fontsize{24}{28.8}\usefont{T1}{ptm}{m}{n}{\color[rgb]{0,0,0}$x$}%
}}}
\put(1576,1964){\makebox(0,0)[lb]{\smash{\fontsize{24}{28.8}\usefont{T1}{ptm}{m}{n}{\color[rgb]{0,0,0}$u_3$}%
}}}
\end{picture}%
}
\caption{\small A front tracking solution where $f_\nu$ is not convex.
As the value $\Hat u_\alpha(t)$ of the local max decreases,
new fronts are created during the interval $[t_1, t']$.   
In the subsequent  time interval $[t', t_2]$, these fronts merge with the shock at $x_1(t)$.
 At time $t_2 $, a single shock is formed again, with left and right states $u_2, \Hat u_\alpha(t_2)$.
 The same pattern can then be repeated a finite number of times.
}
\label{f:df51}
\end{center}
\end{figure}

{\bf 2.} Next we observe that, if the flux functions $f,g$ are convex
(or concave),
then at each restarting time $\tau$ the number of jumps in $u_\nu$  decreases at least by 1. 
It is thus clear that only finitely many restarting times can occur.

The general case, where  $f,g$ are not convex, requires a more careful analysis.
Indeed, since the  values $\Hat u_\alpha(t)$ 
at  (\ref{24}) or (\ref{26}) vary in time, 
it may occur that a large shock located at $x_\alpha(t)$ 
breaks up into two or more smaller jumps (see Fig.~\ref{f:df50}, right).

It clearly suffices to consider the set of restarting times related to all the fronts 
between two local extrema of $u_\nu$, say between a local min and a local max.
As shown in Fig.~\ref{f:df50}, left, let
$$\Hat u_{\alpha-1}(t_1)\,<\, u_m\,<~\cdots~< \,u_2\,<\,u_1\,<\,\Hat u_\alpha(t_1)$$
be the intermediate states between a local min and a local max of $u_\nu(t_1,\cdot)$.
Consider the increasing sequence of times, inductively defined by setting
$$\bega{rl} t_2&\doteq~\inf\Big\{ t>t_1\,;~~\hbox{the solution $u_\nu(t,\cdot)$ contains a jump} \\[2mm]
&\qquad \qquad \hbox{ with left and right states
~$u_i, \Hat u_\alpha(t)$, for some} ~u_i\doteq u_{i(2)} < u_1\Big\}.\enda$$
$$\bega{rl} t_{k}&\doteq~\inf\Big\{ t>t_{k-1}\,;~~\hbox{the solution $u_\nu(t,\cdot)$ contains a jump} \\[2mm]
&\qquad \qquad \hbox{ with left and right states
~$u_i, \Hat u_\alpha(t)$, for some} ~u_i\doteq u_{i(k)} < u_{i(k-1)}\Big\}.\enda$$

Clearly, there can be at most $m$ such times.
It now suffices to check that the total number of restartings during each time interval $[t_k , t_{k+1}[\,$ is 
finite.   

Indeed, during this time interval new fronts can be produced as the jump 
with left and right states $u_{i(k)}, \Hat u_\alpha(t)$ breaks up into smaller jumps.
Since all intermediate states must be integer multiples of $2^{-\nu}$, the total number of these 
new fronts is $\leq 2^\nu\bigl(\Hat u_\alpha(t_k)-u_{i(k)}\bigr)$.
All other interactions involve two or more fronts merging into each other.  
In every such interaction, the total number of fronts decreases at least by 1.
This yields an upper bound on the total number of restarting times, within each interval $[t_k , t_{k+1}[\,$.

The above analysis yields a bound on the number of new fronts produced as the value of the local maximum
$\Hat u_\alpha(t)$ decreases. An entirely similar argument yields a bound on the number of new fronts
that can be produced as the value of the local minimum
$\Hat u_{\alpha-1}(t)$ increases in time.  Combining these bounds, the lemma is proved.
\end{proof}

\begin{figure}[ht]
\begin{center}
\resizebox{.99\textwidth}{!}{
\begin{picture}(0,0)%
\includegraphics{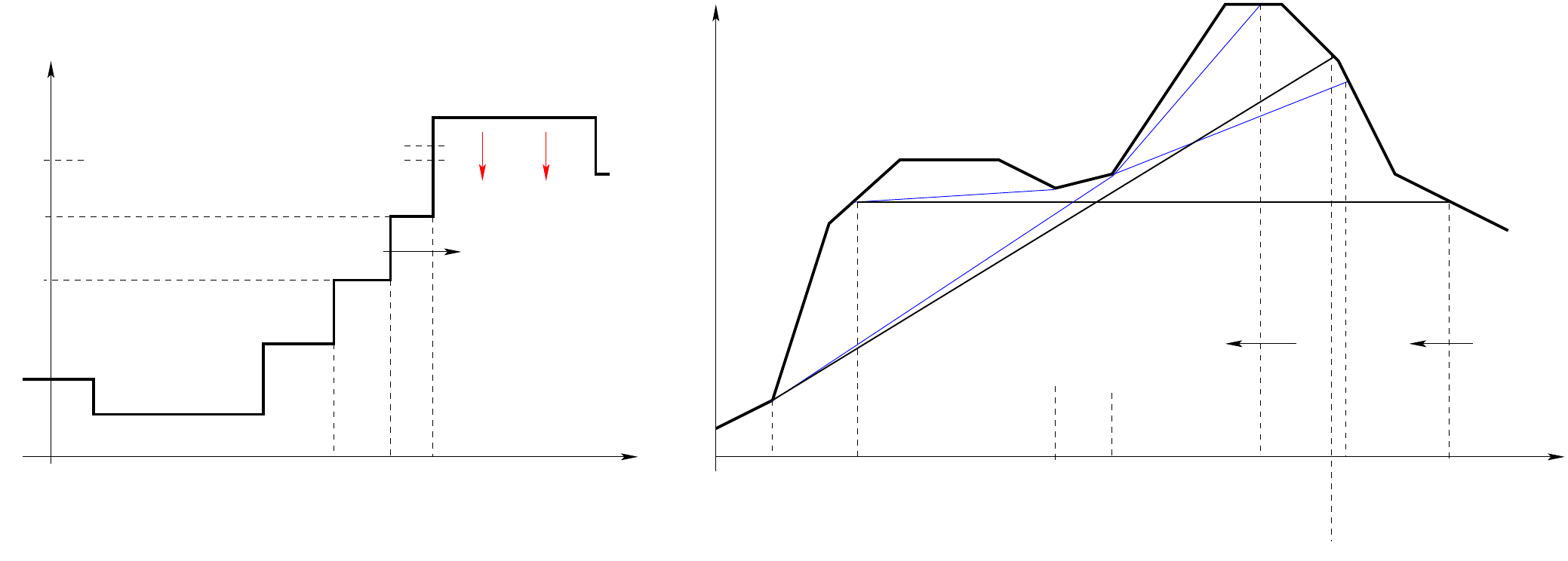}%
\end{picture}%
\setlength{\unitlength}{3947sp}%
\begin{picture}(16602,5936)(-6389,-264)
\put(-1499,4589){\makebox(0,0)[lb]{\smash{\fontsize{24}{28.8}\usefont{T1}{ptm}{m}{n}{\color[rgb]{0,0,0}$\hat u_\alpha(t_1)$}%
}}}
\put(-5099,1439){\makebox(0,0)[lb]{\smash{\fontsize{24}{28.8}\usefont{T1}{ptm}{m}{n}{\color[rgb]{0,0,0}$\hat u_{\alpha-1}$}%
}}}
\put(-6299,4589){\makebox(0,0)[lb]{\smash{\fontsize{24}{28.8}\usefont{T1}{ptm}{m}{n}{\color[rgb]{0,0,0}$u$}%
}}}
\put(-6374,3839){\makebox(0,0)[lb]{\smash{\fontsize{24}{28.8}\usefont{T1}{ptm}{m}{n}{\color[rgb]{0,0,0}$\omega_k$}%
}}}
\put(-6374,3239){\makebox(0,0)[lb]{\smash{\fontsize{24}{28.8}\usefont{T1}{ptm}{m}{n}{\color[rgb]{0,0,0}$u_1$ }%
}}}
\put(-6374,2564){\makebox(0,0)[lb]{\smash{\fontsize{24}{28.8}\usefont{T1}{ptm}{m}{n}{\color[rgb]{0,0,0}$u_2$}%
}}}
\put(-374,464){\makebox(0,0)[lb]{\smash{\fontsize{24}{28.8}\usefont{T1}{ptm}{m}{n}{\color[rgb]{0,0,0}$x$}%
}}}
\put(-1874,464){\makebox(0,0)[lb]{\smash{\fontsize{24}{28.8}\usefont{T1}{ptm}{m}{n}{\color[rgb]{0,0,0}$x_\alpha$}%
}}}
\put(-2474,464){\makebox(0,0)[lb]{\smash{\fontsize{24}{28.8}\usefont{T1}{ptm}{m}{n}{\color[rgb]{0,0,0}$x_1$}%
}}}
\put(-2999,464){\makebox(0,0)[lb]{\smash{\fontsize{24}{28.8}\usefont{T1}{ptm}{m}{n}{\color[rgb]{0,0,0}$x_2$}%
}}}
\put(3976,4139){\makebox(0,0)[lb]{\smash{\fontsize{24}{28.8}\usefont{T1}{ptm}{m}{n}{\color[rgb]{0,0,0}$f_\nu$}%
}}}
\put(1651,464){\makebox(0,0)[lb]{\smash{\fontsize{24}{28.8}\usefont{T1}{ptm}{m}{n}{\color[rgb]{0,0,0}$u_2$}%
}}}
\put(2551,464){\makebox(0,0)[lb]{\smash{\fontsize{24}{28.8}\usefont{T1}{ptm}{m}{n}{\color[rgb]{0,0,0}$u_1$}%
}}}
\put(7726,464){\makebox(0,0)[lb]{\smash{\fontsize{24}{28.8}\usefont{T1}{ptm}{m}{n}{\color[rgb]{0,0,0}$\hat u_\alpha(t')$}%
}}}
\put(5251,464){\makebox(0,0)[lb]{\smash{\fontsize{24}{28.8}\usefont{T1}{ptm}{m}{n}{\color[rgb]{0,0,0}$\omega_{k+1}$}%
}}}
\put(4651,464){\makebox(0,0)[lb]{\smash{\fontsize{24}{28.8}\usefont{T1}{ptm}{m}{n}{\color[rgb]{0,0,0}$\omega_k$}%
}}}
\put(7426,-136){\makebox(0,0)[lb]{\smash{\fontsize{24}{28.8}\usefont{T1}{ptm}{m}{n}{\color[rgb]{0,0,0}$\hat u_\alpha(t_2)$}%
}}}
\put(6601,464){\makebox(0,0)[lb]{\smash{\fontsize{24}{28.8}\usefont{T1}{ptm}{m}{n}{\color[rgb]{0,0,0}$\hat u_\alpha(t'')$}%
}}}
\put(9001,464){\makebox(0,0)[lb]{\smash{\fontsize{24}{28.8}\usefont{T1}{ptm}{m}{n}{\color[rgb]{0,0,0}$\hat u_\alpha(t_1)$}%
}}}
\end{picture}%
}
\caption{\small Left: the profile of the front tracking solution $u_\nu$ considered in Fig.~\ref{f:df51}, at time $t=t_1$.
Right: the left and right states across the jump at $x_\alpha(t)$, at times $t_1<t'<t_2< t''$.
As the left state $\Hat u_\alpha(t)$ decreases, for $t\in [t_1, t']$ the initial shock at $x_\alpha$ 
splits into multiple fronts, with intermediate states $\omega_k,\omega_{k+1}\in 2^{-\nu}\,{\mathbb Z}$. 
During the following time interval $[t', t_2]$ these fronts interact with the shock at $x_1$, 
and at time $t_2$ a single shock with left and right states $u_2, \Hat u_\alpha(t_2)$ is formed.
The pattern can then repeat: by the time $t''$, the value $\Hat u_\alpha(t'')$ has further decreased
and an additional front has emerged.}
\label{f:df50}
\end{center}
\end{figure}

Having proved that front tracking approximations are globally defined, 
in the following proposition we establish some of their properties.

\begin{proposition}\label{p:21} Let $f,g$ satisfy {\bf (A1)}. Let $u_\nu(t,x)$, $v_\nu(t,x)$ be  front tracking solutions 
constructed according to the previous algorithm. Then
\begi
\item[(i)] The norm  $ \bigl\|u_\nu(t,\cdot)\bigr\|_{\L^1}$ is non-increasing in time.
\item[(ii)] The maps $t\mapsto \TV\bigl\{ u_\nu(t,\cdot)\bigr\}$ and 
$t\mapsto \bigl\|u_\nu(t,x)\bigr\|_{\L^\infty}$ are non-increasing. 
\item[(iii)] The comparison principle holds: if $u_\nu(0,x)\leq v_\nu(0,x)$ for all $x\in \R$,  then $u_\nu(t,x)\leq v_\nu(t,x)$
for all $t\geq 0$, $x\in \R$.
\item[(iv)] The $\L^1$ distance between solutions does not increase in time:
\bel{L1d} 
\bigl\| u_\nu(t)-v_\nu(t)\bigr\|_{\L^1}~\leq~\bigl\| u_\nu(0)-v_\nu(0)\bigr\|_{\L^1}\qquad
\hbox{for any} ~~ t>0.
\eeq
\endi
\end{proposition}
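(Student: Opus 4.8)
\textbf{Proof proposal for Proposition~\ref{p:21}.}

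The plan is to verify each of the four properties by checking how it behaves across each of the three elementary events in the algorithm (a restarting of type (i), (ii), or (iii)), and in between, during the smooth evolution governed by the ODEs \eqref{24}, \eqref{26} together with the front speeds \eqref{spe1}, \eqref{spe2}. Since Lemma~\ref{l:22} gives only finitely many restarting times on any bounded interval, it suffices to treat one smooth piece and one event at a time.

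First I would dispose of (i) and (ii), which are essentially the same computation. During a smooth evolution piece, the only intervals on which $u_\nu$ changes are those carrying a local max (decreasing, by \eqref{24}) or a local min (increasing, by \eqref{26}), while all other values $\Hat u_\alpha$ are frozen; a brief computation of $\frac{d}{dt}\int |u_\nu|\,dx$ and of $\frac{d}{dt}\TV\{u_\nu(t,\cdot)\}$ shows both are nonincreasing, because at a local max (resp.\ min) the slopes of the two adjacent jumps only shrink as $\Hat u_\alpha$ moves toward the neighboring values, and $\|u_\nu\|_{\L^\infty}$ is attained either at a frozen value or at a max/min that is moving inward. Across a type (iii) interaction of ordinary fronts, the classical estimates for scalar front tracking with a fixed convex/piecewise-affine flux (Chapter~6 of \cite{Bbook}) give that $\TV$, $\|\cdot\|_{\L^1}$, and $\|\cdot\|_{\L^\infty}$ do not increase; here one must note that two fronts interacting between two consecutive extrema both use the \emph{same} flux ($f_\nu$ on an increasing stretch, $g_\nu$ on a decreasing stretch), so the standard Riemann-solver estimates apply verbatim. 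Across type (i) and type (ii) restartings the function $u_\nu(\tau,\cdot)$ is unchanged — only the bookkeeping of which fronts are tracked changes — so nothing to prove. This settles (i) and (ii).

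For (iii), the comparison principle, I would use the same case analysis but now compare two solutions $u_\nu,v_\nu$ with $u_\nu(0,\cdot)\le v_\nu(0,\cdot)$. The subtlety is that the two solutions may have different sets of local extrema and different interaction times, so the cleanest route is a doubling/approximation argument: show the ordering is preserved on each maximal time interval free of restartings for \emph{either} solution, and that it passes to the limit at each (finitely many) restarting time by $\L^1_{loc}$-continuity of $t\mapsto u_\nu(t,\cdot)$ and $t\mapsto v_\nu(t,\cdot)$. On such an interval, where does ordering fail? Only where a jump of one solution crosses a jump of the other, or at the moving plateaus. For ordinary fronts this is the standard monotone-scheme argument for a scalar law with a single flux. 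The genuinely new point is the moving plateaus: a local max of $u_\nu$ at height $\Hat u_\alpha$ sits below $v_\nu$, and one must check that the ODE \eqref{24}, whose right-hand side is $\bigl(f_\nu(\Hat u_\alpha)-g_\nu(\Hat u_\alpha)\bigr)/\ell$, cannot push $\Hat u_\alpha$ up through $v_\nu$ — but \eqref{24} only decreases $\Hat u_\alpha$, so the only danger is $v_\nu$ decreasing to meet it, and there a local-max plateau of $v_\nu$ is decreasing no faster relative to its (longer or shorter) interval in a way controlled by the same monotone structure; a careful bookkeeping of the finitely many times a front of $u_\nu$ touches a front of $v_\nu$, combined with the fact that at such a contact the Rankine–Hugoniot / Liu-admissible resolution preserves order, closes the argument. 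I expect \textbf{this coupling of moving plateaus in the two solutions to be the main obstacle}, since it has no analogue in the classical theory; the resolution is that the source terms in \eqref{24}, \eqref{26} are monotone in the right way (sinks at maxima, sources at minima) so that the map $\bar u\mapsto u_\nu(t,\cdot)$ remains order-preserving.

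Finally, (iv) follows from (ii) and (iii) by Crandall–Tartar: on a space of functions with the property that $\|u\vee v\|_{\L^1}+\|u\wedge v\|_{\L^1}=\|u\|_{\L^1}+\|v\|_{\L^1}$ (true here, working with the periodic representative or with compactly supported data, so all $\L^1$ norms are finite), an order-preserving map that conserves or does not increase the integral is an $\L^1$-contraction. Concretely: set $w_\nu$ to be the front tracking solution with initial data $\bar u_\nu\vee\bar v_\nu$ and $z_\nu$ with data $\bar u_\nu\wedge\bar v_\nu$; by (iii), $z_\nu(t)\le u_\nu(t),v_\nu(t)\le w_\nu(t)$, hence
\[
\|u_\nu(t)-v_\nu(t)\|_{\L^1}\ \le\ \|w_\nu(t)-z_\nu(t)\|_{\L^1}\ =\ \int\bigl(w_\nu(t)-z_\nu(t)\bigr)\,dx,
\]
and by (i) applied to $w_\nu-z_\nu\ge0$ (equivalently, since $\int w_\nu$ and $\int z_\nu$ are each nonincreasing with the right signs, or simply by monotone dependence of the integral), the last quantity is $\le\int\bigl(w_\nu(0)-z_\nu(0)\bigr)\,dx=\|\bar u_\nu-\bar v_\nu\|_{\L^1}$. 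One small point to check is that $\int u_\nu(t,\cdot)\,dx$ is in fact \emph{conserved} up to the contribution of the finitely many bounded plateaus — but for the difference $w_\nu-z_\nu$ of two solutions the spurious source terms cancel in the way needed, so only the monotonicity supplied by (i)–(iii) is used. This gives \eqref{L1d} and completes the proof. \endproof
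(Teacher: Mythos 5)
Your overall architecture matches the paper's (TV and $\L^\infty$ by the plateau dynamics, standard single-flux comparison at front interactions, and a Crandall--Tartar reduction of (iv) to (i)+(iii)), but two steps are genuinely missing. First, your argument for (i) does not work: the local reasoning "the plateau moves toward the neighboring values, so the adjacent jumps shrink" proves the TV and $\L^\infty$ claims, but says nothing correct about $\|u_\nu\|_{\L^1}$. If a local maximum sits at a \emph{negative} height (or a local minimum at a positive height), the ODE \eqref{24} pushes the plateau \emph{away} from zero, so $|u_\nu|$ increases pointwise on that interval; the decrease of the $\L^1$ norm is not a local effect of the extrema at all. The paper's proof writes $u_\nu$ as an exact weak solution of the conservation law \eqref{Fnu} with the affinely interpolated coefficient $\theta_\nu$, partitions $\R$ at the sign-change points $y_k(t)$, and proves \eqref{dtarea} by a flux balance at those fronts; that balance genuinely uses Liu admissibility of the sign-changing jumps (the inequalities \eqref{eq:gnu-k}, \eqref{eq:fnu-k}) together with $g_\nu(0)-f_\nu(0)\ge 0$. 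None of these ingredients appear in your sketch, and without them the asserted "brief computation" does not close.

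Second, in (iv) the remaining step after the Crandall--Tartar reduction — that $\int\bigl(w_\nu^+(t)-w_\nu^-(t)\bigr)dx$ is nonincreasing — is not supplied. Appealing to (i) for the difference $w_\nu^+-w_\nu^-$ is circular: the difference of two solutions is not a solution, and the monotonicity of its $\L^1$ norm is precisely the contraction being proved. The claim that "the spurious source terms cancel" is also false: the sinks and sources of $w_\nu^+$ and $w_\nu^-$ sit on different intervals at different heights, so there is no pointwise cancellation. The paper's mechanism is different: each of $w_\nu^\pm$ solves its own conservation law of the form \eqref{Fnu} exactly, so the time derivative of the mass difference reduces to boundary flux terms at $x\to\pm\infty$ as in \eqref{comp2}, and the ordering $w_\nu^-\le w_\nu^+$ forces, via \eqref{lim-}--\eqref{lim+} and again $f_\nu(0)\le g_\nu(0)$, that both boundary contributions are nonpositive. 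A smaller but real omission occurs in (iii): at a contact of two local-max plateaus at the same height, the ordering $u_\nu\le v_\nu$ forces the $v_\nu$-plateau interval to \emph{contain} the $u_\nu$-plateau interval, so by \eqref{24} the $u_\nu$-plateau decreases at least as fast and order is preserved; your hedge "(longer or shorter) interval" is exactly where this decisive inequality lives and must be pinned down.
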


\begin{proof}
{\bf 1.} To prove (i) we observe that $u_\nu$ is an entropy weak solution 
of the equation
\bel{Fnu}
u_t + \bigl(F_\nu(t,x,u)\bigr)_x~=~0\,,
\eeq
 for
the interpolated flux
\bel{ipflux}
F_\nu(t,x,u) ~\doteq~\theta_\nu(x,t) f_\nu(u) + \bigl(1-\theta_\nu(x,t)\bigr) g_\nu(u)\,,
\eeq
where the parameter $\theta_\nu\in [0,1]$ is affine-interpolated  as follows.
\begi
\item If $\bigl[ x_\alpha(t), x_{\alpha+1}(t)]$ is an interval 
where $u_\nu(t,\cdot)$ attains a local maximum, then
\bel{th-n-1}
\theta_\nu(t,x)~\doteq~\frac{x_{\alpha+1}(t)-x}{ x_{\alpha+1}(t)-x_\alpha(t)} 
\qquad\forall x\in \bigl[ x_\alpha(t), x_{\alpha+1}(t)].
\end{equation}
\item If $\bigl[ x_\alpha(t), x_{\alpha+1}(t)]$ is an interval 
where $u_\nu(t,\cdot)$ attains a local minimum, then
\bel{th-n-2}
\theta_\nu(t,x)~\doteq~\frac{x-x_{\alpha}(t)}{ x_{\alpha+1}(t)-x_\alpha(t)} 
\qquad\forall x\in \bigl[ x_\alpha(t), x_{\alpha+1}(t)].
\end{equation}
\endi
The definition of $\theta_\nu$  is completed by requiring of $\theta_\nu(t,\cdot)$
to be constant on every
connected component of the remaining set $\R\setminus \bigcup_{\alpha\in \M}
\bigl[ x_\alpha(t), x_{\alpha+1}(t)]$, where $\M$ is the set of indices corresponding to
local maxima or minima. Notice that in this way one obtains a map $x\mapsto \theta_\nu(t,x)$ which is continuous,
piecewise affine, defined for all $x\in\R$.

\begin{figure}[htbp]
\begin{center}
\resizebox{.85\textwidth}{!}{
\begin{picture}(0,0)%
\includegraphics{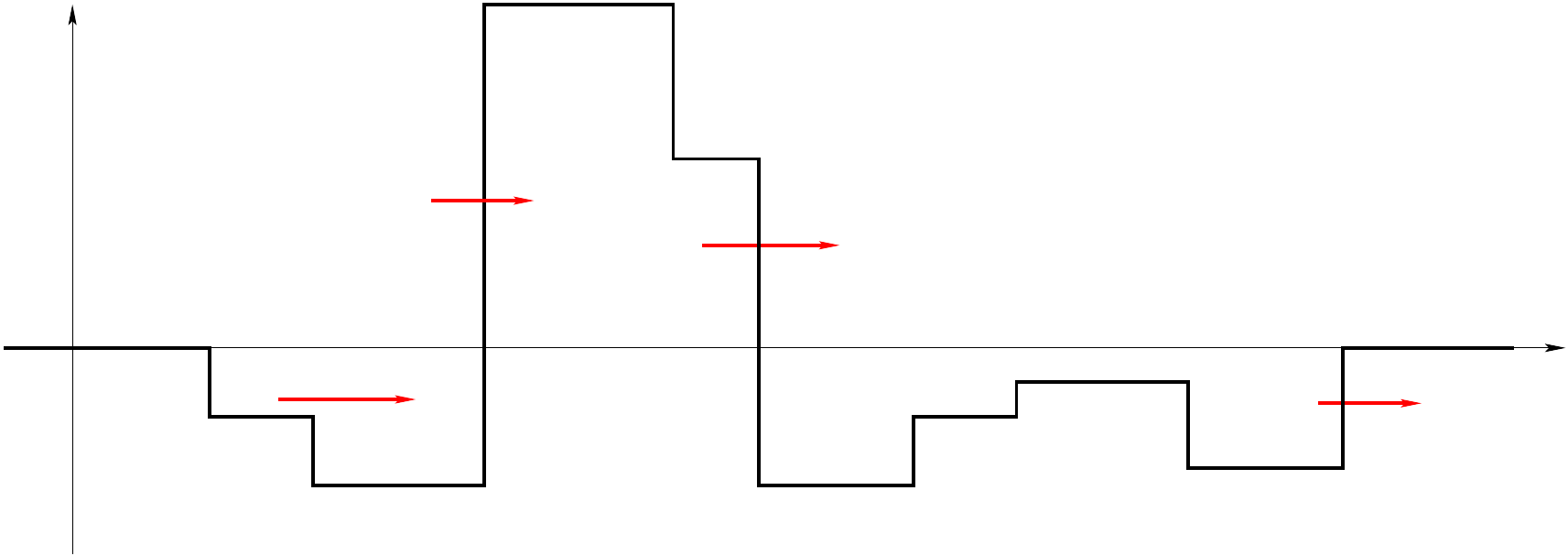}%
\end{picture}%
\setlength{\unitlength}{3947sp}%
\begin{picture}(13695,4845)(-1232,-973)
\put(4801,2864){\makebox(0,0)[lb]{\smash{\fontsize{22}{26.4}\usefont{T1}{ptm}{m}{n}{\color[rgb]{0,0,0}$u_\nu$}%
}}}
\put(5551,989){\makebox(0,0)[lb]{\smash{\fontsize{22}{26.4}\usefont{T1}{ptm}{m}{n}{\color[rgb]{0,0,0}$y_3(t)$}%
}}}
\put(10201,1064){\makebox(0,0)[lb]{\smash{\fontsize{22}{26.4}\usefont{T1}{ptm}{m}{n}{\color[rgb]{0,0,0}$y_4(t)$}%
}}}
\put(3076,2414){\makebox(0,0)[lb]{\smash{\fontsize{22}{26.4}\usefont{T1}{ptm}{m}{n}{\color[rgb]{1,0,0}$f$}%
}}}
\put(5551,1889){\makebox(0,0)[lb]{\smash{\fontsize{22}{26.4}\usefont{T1}{ptm}{m}{n}{\color[rgb]{1,0,0}$g$}%
}}}
\put(1501,539){\makebox(0,0)[lb]{\smash{\fontsize{22}{26.4}\usefont{T1}{ptm}{m}{n}{\color[rgb]{1,0,0}$g$}%
}}}
\put(10576,464){\makebox(0,0)[lb]{\smash{\fontsize{22}{26.4}\usefont{T1}{ptm}{m}{n}{\color[rgb]{1,0,0}$f$}%
}}}
\put(301,989){\makebox(0,0)[lb]{\smash{\fontsize{22}{26.4}\usefont{T1}{ptm}{m}{n}{\color[rgb]{0,0,0}$y_1(t)$}%
}}}
\put(3151,989){\makebox(0,0)[lb]{\smash{\fontsize{22}{26.4}\usefont{T1}{ptm}{m}{n}{\color[rgb]{0,0,0}$y_2(t)$}%
}}}
\end{picture}%
}
\caption{\small  Partitioning the real line into subintervals where $u_\nu$ has a  constant sign.}
\label{f:df27}
\end{center}
\end{figure}

Next, as shown in Fig.~\ref{f:df27}, we insert points $y_k(t)$, $k=0,1,2,\ldots$, such that 
%
\begin{align*}
&u_\nu(t,x)\,\le\,0\qquad \hbox{if}\quad y_{2k-1}(t)\,<x\,<y_{2k}(t)\,,
\\[2mm]
&u_\nu(t,x)\,\ge \,0\qquad \hbox{if}\quad y_{2k}(t)\,<\,x\,<\, y_{2k+1}(t)\, .
\end{align*}
%
Using the fact that $f <g$ and all jumps satisfy the Liu admissibility conditions,
we check that
\bel{dtarea}\frac{d}{ dt} \int_{y_{2k-1}(t)}^{ y_{2k}(t)}u_\nu(t,x)\, dx~\geq~0,\qquad\qquad 
\frac{d}{ dt} \int_{y_{2k}(t)}^{ y_{2k+1}(t)}u_\nu(t,x)\, dx~\leq~0,\eeq
for every $k$.  Property (i) is a direct consequence of (\ref{dtarea}).

To prove (\ref{dtarea}),  consider the first  inequality. The jump at $y_{2k-1}$ 
connects the states
$u_\nu(t,y_{2k-1}-) > u_\nu(t,y_{2k-1}+)$. Being a downward jump, it travels with speed 
determined by the flux $g_\nu$. 
Similarly, the jump at $y_{2k}$ is an upward jump and  travels with speed determined by the flux
 $f_\nu$.
Therefore
\begin{align*}
\frac{d}{ dt} \int_{y_{2k-1}(t)}^{ y_{2k}(t)}u_\nu(t,x)\, dx = & \,
- f_\nu\bigl( u_\nu(t,y_{2k}-)   \bigr)  +  g_\nu\bigl( u_\nu(t,y_{2k-1}+)   \bigr)  \\
 &\,+ \dot y_{2k}(t)\, u_\nu\bigl(t,y_{2k}-\bigr) - \dot y_{2k-1}(t)\, u_\nu\bigl(t,y_{2k-1}-\bigr)\,.
 \end{align*}
At $y_{2k-1}(t)$, since $u=0$ is an intermediate value between the left and right states, the 
Liu stability condition implies
$$
\dot y_{2k-1} (t)~=~ \frac{g_\nu\bigl(u_\nu(t,y_{2k-1}+) \bigr) -  g_\nu\bigl(u_\nu(t,y_{2k-1}-) \bigr) }{ u_\nu(t,y_{2k-1}+) - u_\nu(t,y_{2k-1}-)} ~\ge~ 
\frac{g_\nu\bigl(u_\nu(t,y_{2k-1}+) \bigr) -  g_\nu\bigl(0 \bigr) }{ u_\nu(t,y_{2k-1}+) }\,.
$$
Therefore
\bel{eq:gnu-k}
\dot y_{2k-1} (t) \, u_\nu(t,y_{2k-1}+) ~ \le~  g_\nu\bigl( u_\nu(t,y_{2k-1}+)   \bigr)  -  g_\nu\bigl(0 \bigr) \,.
\eeq
Similarly, at $y_{2k}(t)$, 
the Liu stability condition yields
$$
\dot y_{2k}(t) ~=~ \frac{f_\nu\bigl(u_\nu(t,y_{2k}-) \bigr) -  f_\nu\bigl(u_\nu(t,y_{2k}+) \bigr) }{u_\nu(t,y_{2k}-) - u_\nu(t,y_{2k}+)}~ \le~ 
\frac{f_\nu\bigl(u_\nu(t,y_{2k}-) \bigr) -  f_\nu\bigl(0 \bigr) }{ u_\nu(t,y_{2k}-) }\,,
$$
and hence
\bel{eq:fnu-k}
\dot y_{2k} (t)\, u_\nu(t,y_{2k}-)  ~\ge ~f_\nu\bigl(u_\nu(t,y_{2k}-) \bigr) -  f_\nu\bigl(0 \bigr) \,.
\eeq
We thus conclude
$$\frac{d}{ dt} \int_{y_{2k-1}(t)}^{ y_{2k}(t)}u_\nu(t,x)\, dx ~
\ge  ~ g_\nu\bigl(0 \bigr) -  f_\nu\bigl(0 \bigr)~\ge~ 0\,.$$
The proof of the second inequality in (\ref{dtarea}) is entirely similar. 

\v
{\bf 2.}  Concerning (ii), we observe that, outside interaction times, the total variation does not increase because
the value of $u_\nu$ decreases on intervals where a local max is attained, 
and increases  on intervals where a local min is attained.

On the other hand, when two or more wave-fronts interact, 
the solution to the Riemann problem does not increase the total variation.
Therefore $\TV\bigl\{ u_\nu(t,\cdot)\bigr\}$ is non-increasing in time.

The 
uniform upper and lower bounds for $u_\nu(t,x)$ follow by the above properties of 
front interactions, together with (\ref{24}) and (\ref{26}).
\v

\begin{figure}[htbp]
\begin{center}
\resizebox{0.8\textwidth}{!}{
\begin{picture}(0,0)%
\includegraphics{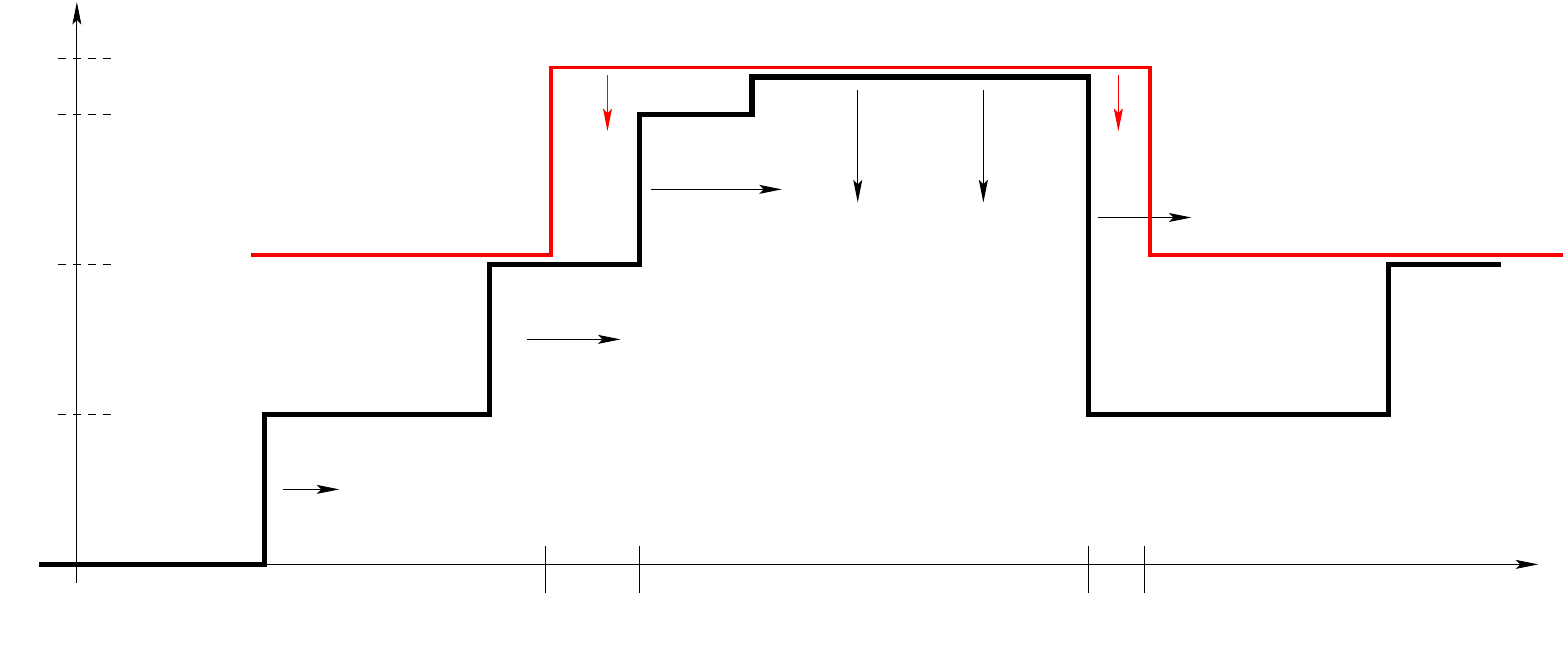}%
\end{picture}%
\setlength{\unitlength}{3947sp}%
\begin{picture}(12543,5231)(286,120)
\put(2926,3464){\makebox(0,0)[lb]{\smash{\fontsize{22}{26.4}\usefont{T1}{ptm}{m}{n}{\color[rgb]{1,0,0}$v$}%
}}}
\put(301,4664){\makebox(0,0)[lb]{\smash{\fontsize{22}{26.4}\usefont{T1}{ptm}{m}{n}{\color[rgb]{0,0,0}$u^\sharp$}%
}}}
\put(5301,299){\makebox(0,0)[lb]{\smash{\fontsize{22}{26.4}\usefont{T1}{ptm}{m}{n}{\color[rgb]{0,0,0}$x_\alpha$}%
}}}
\put(4551,299){\makebox(0,0)[lb]{\smash{\fontsize{22}{26.4}\usefont{T1}{ptm}{m}{n}{\color[rgb]{0,0,0}$y_\beta$}%
}}}
\put(8551,299){\makebox(0,0)[lb]{\smash{\fontsize{22}{26.4}\usefont{T1}{ptm}{m}{n}{\color[rgb]{0,0,0}$x_{\alpha+1}$}%
}}}
\put(9401,299){\makebox(0,0)[lb]{\smash{\fontsize{22}{26.4}\usefont{T1}{ptm}{m}{n}{\color[rgb]{0,0,0}$y_{\beta+1}$}%
}}}
\put(3001,2189){\makebox(0,0)[lb]{\smash{\fontsize{22}{26.4}\usefont{T1}{ptm}{m}{n}{\color[rgb]{0,0,0}$u$}%
}}}
\end{picture}%
}
\caption{\small  Two front tracking approximate solutions to the conservation law
with discontinuous flux (\ref{1})-(\ref{2}). If $u(t,x)\leq v(t,x)$ for all $x\in\R$, then 
$u(t',x)\leq v(t',x)$ for all $x\in\R$ and $t'>t$.   
}
\label{f:df10}
\end{center}
\end{figure}

{\bf 3.} To prove the comparison property  (iii), assume that at some time $\tau$ 
one has $u_\nu(\tau,x)\leq v_\nu(\tau,x)$ for all $x\in \R$.
Consider a point $\bar x$  where both $u_\nu$ and $v_\nu$ have a jump, with 
right and left states $u^\pm, v^\pm$ respectively.
 
In a neighborhood of $\bar x$, for $t>\tau$ 
 the comparison property trivially holds if $\max\{u^-,u^+\}\le \min\{v^-,v^+\}$. 
 On the other hand, in the case of  upward jumps
$$
u^-\le v^- < u^+\le v^+\,, $$
both $u_\nu $ and $v_\nu$ are entropy solutions to a conservation law with the same flux:
\bel{clawfnu}u_t+f_\nu(u)_x~=~0.\eeq
In this case the comparison property follows by the standard argument in \cite[Sect.~6.4]{Bbook}.

In the case of downward jumps
$$ u^+\le v^+ < u^-\le v^-\,,
$$
both $u_\nu$ and $v_\nu$ are entropy solutions to a conservation law with the same flux:
$$u_t+g_\nu(u)_x~=~0.$$
Hence the comparison argument in \cite[Sect.~6.4]{Bbook} still holds.

It remains to check that the horizontal portions of the graphs of $u_\nu$, $v_\nu$
cannot cross each other at intervals where local maxima or minima are attained.
This is a consequence of the formulas (\ref{24}) and (\ref{26}). Indeed, assume that 
$u_\nu(\tau,x)\leq v_\nu(\tau,x)$ for all $x\in \R$. As shown in Fig.~\ref{f:df10},
let  $u_\nu$ attain a local maximum $\Hat u(\tau)$ on the interval  $[x_\alpha(\tau),x_{\alpha +1}(\tau)]$ and let
$v_\nu$ attain a local maximum
$\Hat v(\tau)$ on $[y_\beta(\tau),y_{\beta +1}(\tau)] \,\supseteq\,
[x_\alpha(\tau),x_{\alpha +1}(\tau)]$.
If $\Hat u(\tau)=\Hat v(\tau)\doteq u^\sharp$, then at time $t=\tau$ by (\ref{24}) and (\ref{26}) one has 
$$\frac{d}{ dt} \Hat u(t)~~=~\frac{f_\nu(u^\sharp) - g_\nu(u^\sharp)}{ 
x_{\alpha+1}(\tau)- x_{\alpha}(\tau)}~\leq~\frac{f_\nu(u^\sharp) - g_\nu(u^\sharp)}{ 
y_{\beta+1}(\tau)- y_{\beta}(\tau)}~=~\frac{d}{ dt} \Hat v(t)~\leq~0. $$
The case of a local minimum is entirely similar.
\v
{\bf 4.}
The property (iv) is a consequence of (i)  and (iii). Indeed, given any two initial data
$\bar u, \bar v$, consider  the two functions $\Tilde {w}^\pm$ defined by
$$\Tilde  w ^+(x)~=~\max\bigl\{ \bar u(x), \bar v(x)\bigr\},\qquad\qquad
\Tilde  w^-(x)~=~\min\bigl\{ \bar u(x), \bar v(x)\bigr\}.$$
Let $w_\nu^+(t,x)$, $w_\nu^-(t,x)$ be the approximate solutions corresponding to the initial data $\Tilde {w}^+$,  $\Tilde  {w}^-$ respectively.

By the comparison property (iii), for every $t\geq 0$ and $x\in \R$  we have
$$
w^-_\nu(t,x) \le u_\nu(t,x) \le w_\nu^+(t,x)\,,\qquad w_\nu^-(t,x) \le   v_\nu(t,x) \le w_\nu^+(t,x).
$$
Therefore, at any time $t\geq 0$ there holds
\bel{comp1}
\bigl\| u_\nu(t)-v_\nu(t)\bigr\|_{\L^1(\R)} ~\leq~\bigl\| w_\nu^+(t)- w_\nu^-(t)\bigr\|_{\L^1}
~=~
 \int_\R \bigl( w_\nu^+(t,x)- w_\nu^-(t,x)\bigr)\,dx.
\eeq

\begin{figure}[htbp]
\begin{center}
\resizebox{.8\textwidth}{!}{
\begin{picture}(0,0)%
\includegraphics{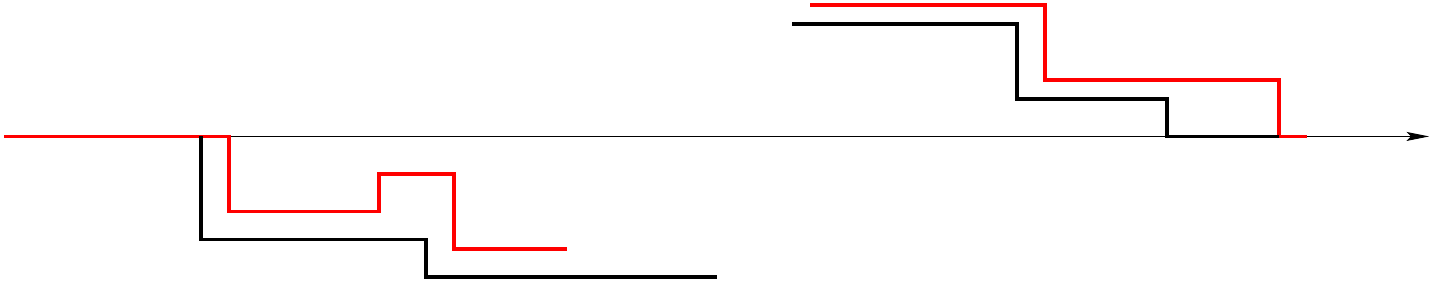}%
\end{picture}%
\setlength{\unitlength}{3947sp}%
\begin{picture}(11445,2241)(-632,-319)
\put(1426,389){\makebox(0,0)[lb]{\smash{\fontsize{22}{26.4}\usefont{T1}{ptm}{m}{n}{\color[rgb]{1,0,0}$w^+$}%
}}}
\put(10476,514){\makebox(0,0)[lb]{\smash{\fontsize{22}{26.4}\usefont{T1}{ptm}{m}{n}{\color[rgb]{0,0,0}$x$}%
}}}
\put(4726,-136){\makebox(0,0)[lb]{\smash{\fontsize{22}{26.4}\usefont{T1}{ptm}{m}{n}{\color[rgb]{0,0,0}$w^-$}%
}}}
\put(6376,1214){\makebox(0,0)[lb]{\smash{\fontsize{22}{26.4}\usefont{T1}{ptm}{m}{n}{\color[rgb]{0,0,0}$w^-$}%
}}}
\put(8176,1439){\makebox(0,0)[lb]{\smash{\fontsize{22}{26.4}\usefont{T1}{ptm}{m}{n}{\color[rgb]{1,0,0}$w^+$}%
}}}
\end{picture}%
}
\caption{\small  The two implications at (\ref{lim-})-(\ref{lim+})
are an immediate consequence of the inequality $w^-\leq w^+$.
}
\label{f:df28}
\end{center}
\end{figure}

We claim that the right hand side of (\ref{comp1}) is a non-increasing 
function of time. Indeed, since both $w_\nu^-, w_\nu^+$ satisfy a conservation 
law of the form (\ref{Fnu}), 
one has
\begin{align} 
\frac{d}{ dt} \int_\R \bigl( w_\nu^+(t,x)- w_\nu^-(t,x)\bigr)\,dx 
~=~&\lim_{x\to -\infty} \Big[ F_\nu(t,x, w_\nu^+) - F_\nu(t,x, w_\nu^-)\Big] 
\nonumber \\
&~ - ~\lim_{x\to +\infty} \Big[ F_\nu(t,x, w_\nu^+) - F_\nu(t,x, w_\nu^-)\Big].
\label{comp2} 
\end{align}
We claim that both terms on the right hand side of (\ref{comp2}) give a non-positive
contribution.
Indeed,  each of the limits $\lim_{x\to\pm \infty}  F_\nu(t,x, w_\nu^\pm)$ can be either
$f_\nu(0)$ or $g_\nu(0)$.   As shown in Fig.~\ref{f:df28},  by the assumption 
$w_\nu^-(t,x)\leq w_\nu^+(t,x)$ for all $x$
we have the implications
\bel{lim-}\lim_{x\to -\infty} F_\nu(t,x, w_\nu^+) ~=~g_\nu(0)\qquad\implies\qquad
\lim_{x\to -\infty} F_\nu(t,x, w_\nu^-) ~=~g_\nu(0),\eeq
\bel{lim+}\lim_{x\to +\infty} F_\nu(t,x, w_\nu^+) ~=~f_\nu(0)\qquad\implies\qquad
\lim_{x\to +\infty} F_\nu(t,x, w_\nu^-) ~=~f_\nu(0).\eeq
From (\ref{lim-})-(\ref{lim+}) it follows that the right hand side of (\ref{comp2})
cannot be positive.   This completes the proof of (iv).
\end{proof}

\begin{remark}\label{r:21}
{\rm Given an integer $\nu\geq 1$,  the piecewise affine 
approximate  flux functions $f_\nu,g_\nu$ are uniquely defined
according to (\ref{fnudef}).   We remark that the above construction of front tracking solutions 
can be performed starting with any piecewise constant initial $\bar v\in \L^1(\R)$,
not necessarily taking values in the set $2^{-\nu}\Z$.

In particular, given any initial data  $\bar u$ as in (\ref{idata}),  we can keep $\nu$ fixed
and construct a sequence
of piecewise constant initial data $\bar u_k:\R\mapsto 2^{-k}\Z$, converging to $\bar u$ as $k\to\infty$.  
Denote by $t\mapsto u_k(t)\doteq S_t^\nu \bar u_k$ the corresponding solutions.
By the contraction property (\ref{L1d}),
for every $t\geq 0$ we have the $\L^1$ convergence $u_k(t)\to u(t)= S^\nu_t \bar u$.
This defines a  contractive semigroup, which we denote by 
\bel{Snu}S^\nu: \L^1(\R)\times \R_+\mapsto\L^1(\R).\eeq
}\end{remark}

We now let $\nu\to\infty$ and consider the limit semigroup $S_t\bar u=\lim_{\nu\to\infty} S_t^\nu \bar u$.  

\begin{theorem}\label{t:21} 
Let $f,g$ satisfy {\bf (A1)} and consider the piecewise affine 
approximations  $f_\nu, g_\nu$, $\nu\geq 1$,
defined as in (\ref{fnudef}).  Call $S^\nu$ the corresponding semigroup at (\ref{Snu}).
Then, for any initial data $\bar u\in \L^1(\R)$,  we have the 
$\L^1$-convergence to a unique limit
\bel{St}\lim_{\nu\to\infty} 
S^\nu_t \bar u~=~u(t)~\doteq~S_t \bar u\qquad\qquad\forall t\geq 0.\eeq
The limit semigroup satisfies the contraction property:
\bel{contr2}
\bigl\|S_t \bar u - S_t\bar v\bigr\|_{\L^1}~\leq~\bigl\| \bar u - \bar v\bigr\|_{\L^1}\,.\eeq
\end{theorem}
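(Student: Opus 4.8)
The plan is to prove Theorem~\ref{t:21} as a Cauchy-sequence argument in $\L^1$, exploiting the contractivity already established for each fixed $\nu$ in Proposition~\ref{p:21}(iv) and the uniform bounds on total variation and $\L^\infty$ norm in Proposition~\ref{p:21}(ii). The standard structure is: (a) first treat a \emph{fixed} piecewise constant initial datum $\bar u$ with compact support and finite total variation, showing that $\{S^\nu_t\bar u\}_{\nu\geq 1}$ converges; (b) then extend to arbitrary $\bar u\in\L^1(\R)$ by density, using the fact that each $S^\nu$ and the limit are contractions; (c) finally pass the contraction inequality \eqref{L1d} to the limit to obtain \eqref{contr2}.

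For step (a), the key point is that the semigroups $S^\nu$ and $S^{\nu'}$ for different mesh sizes are \emph{not} directly comparable by a contraction estimate (they use different fluxes $f_\nu,g_\nu$), so I would estimate the discrepancy in two moves. First, since the flux functions are $\C^2$, one has $\|f_\nu-f\|_{\C^0(K)}, \|g_\nu-g\|_{\C^0(K)} = O(2^{-2\nu})$ on any compact set $K\supseteq[-\|\bar u\|_{\L^\infty},\|\bar u\|_{\L^\infty}]$, and likewise $\|f'_\nu-f'\|_{\C^0}=O(2^{-\nu})$; one also controls $\|\bar u_\nu-\bar u_{\nu'}\|_{\L^1}$ via \eqref{1lim}. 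The clean way to compare $S^\nu_t\bar u_\nu$ and $S^{\nu'}_t\bar u_{\nu'}$ is to view $u_{\nu'}$ as an approximate solution of the $\nu$-th conservation law \eqref{Fnu}–\eqref{ipflux} and invoke the standard error estimate for contractive semigroups (Bressan's book, Ch.~2/9): the $\L^1$-distance at time $t$ is bounded by the initial $\L^1$-distance plus $\int_0^t(\text{instantaneous error rate})\,ds$, where the error rate is controlled by the flux discrepancy times the total variation, which is uniformly bounded by \eqref{bunu}. Both contributions tend to $0$ as $\nu,\nu'\to\infty$, so $\{S^\nu_t\bar u\}$ is Cauchy in $\L^1$, uniformly for $t$ in bounded intervals; call the limit $S_t\bar u$. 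The semigroup property $S_{t+s}=S_t\circ S_s$ passes to the limit because each $S^\nu$ has it and convergence is in $\L^1$ with the contraction controlling composition.

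Step (b) is routine: for general $\bar u\in\L^1(\R)$ pick piecewise constant compactly supported $\bar u^{(k)}\to\bar u$ in $\L^1$; by \eqref{L1d}, $\|S^\nu_t\bar u^{(k)} - S^\nu_t\bar u\|_{\L^1}\leq\|\bar u^{(k)}-\bar u\|_{\L^1}$ uniformly in $\nu$, so a standard $\varepsilon/3$ argument (using step (a) for each fixed $\bar u^{(k)}$) shows $\{S^\nu_t\bar u\}$ is Cauchy; define $S_t\bar u$ as its limit. Uniqueness of the limit is immediate since $\L^1$ is complete and the limit of a convergent sequence is unique. Finally, \eqref{contr2} follows by taking $\nu\to\infty$ in \eqref{L1d}, and the semigroup identity $S_0=\mathrm{Id}$, $S_{s+t}=S_s\circ S_t$ on all of $\L^1(\R)$ follows by continuity/density from the piecewise-constant case.

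I expect the main obstacle to be step (a): making rigorous the claim that $u_{\nu'}$ is an \emph{admissible} approximate solution of the $\nu$-th problem with a quantitatively small error. The subtlety is that $u_{\nu'}$ contains the special non-constant plateaus where local max/min values evolve by the ODEs \eqref{24}, \eqref{26} with flux $g_{\nu'}-f_{\nu'}$, not $g_\nu-f_\nu$; one must check that replacing the flux pair changes the plateau dynamics only by $O(2^{-\nu})$ in the relevant norm and that Lemma~\ref{l:21} gives a \emph{uniform-in-$\nu$} lower bound $\ell(t)\geq \ell_*(t)>0$ on plateau lengths (which it does, since the bound \eqref{eqM2} depends only on $\|\bar u\|_{\L^\infty}$, $c_0$, $\lambda^\dagger$, not on $\nu$), so the denominators in \eqref{24}–\eqref{26} stay bounded away from zero and the ODE comparison is controlled. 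Once this uniform geometric control is in hand, the rest is a by-now-classical front-tracking convergence argument.
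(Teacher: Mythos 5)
Your proposal is correct and follows essentially the same route as the paper: for piecewise constant data one shows $\{S^\nu_t\bar u\}$ is Cauchy by bounding the instantaneous error rate of $u_\nu$ viewed against the $\mu$-semigroup (flux discrepancy $\times$ uniformly bounded TV, plus the plateau contribution) and integrating via the standard error formula for contractive semigroups, then extends to all of $\L^1$ by density and passes \eqref{L1d} to the limit. The only cosmetic difference is your treatment of the plateaus: in the paper's estimate \eqref{ees} the interval lengths cancel in the $\L^1$ norm, leaving a term $\bigl(\|f_\mu-f_\nu\|_{\L^\infty}+\|g_\mu-g_\nu\|_{\L^\infty}\bigr)\cdot N_\nu(t)$ with $N_\nu(t)$ non-increasing, so the uniform-in-$\nu$ lower bound on plateau lengths from Lemma~\ref{l:21} that you invoke is not actually needed for this step.
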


\begin{proof}
 {\bf 1.} Consider first the case where $\bar u$ is piecewise constant.
Then for every $\nu\geq 1$ and $t>0$ the function $u_\nu(t,\cdot)=S_t^\nu\bar u$ is piecewise constant.
To prove (\ref{St}), we show that the sequence $\{u_\nu(t,\cdot)\}_{\nu\ge 1}$ is a Cauchy sequence  for every $t>0$ and hence it converges to a function $u(t,\cdot)$ as $\nu\to\infty$.

Consider any two integers $1\leq \nu\leq \mu$. We estimate
\begin{align} 
&\ds\lim_{h\to 0+} \frac{1}{ h}  \bigl\| S^\mu_h u_\nu(t) - u_\nu(t+h)\bigr\|_{\L^1}
\nonumber \\
&\qquad \ds\leq~
\Big(\|f_\mu-f_\nu\|_{W^{1,\infty}}+ \|g_\mu-g_\nu\|_{W^{1,\infty}}\Big) \cdot \TV \bigl\{ u_\nu(t,\cdot)\bigr\} 
\nonumber\\
&\qquad\qquad +~  \Big(\|f_\mu-f_\nu\|_{\L^\infty}+ \|g_\mu-g_\nu\|_{\L^\infty}\Big)\cdot
N_\nu(t),\label{ees} 
\end{align} 
where $N_\nu(t)$ is the number of intervals where $u_\nu(t,\cdot)$ attains a local maximum or a local minimum.  Observing that $f_\nu\to f$ and $g_\nu\to g$ in $W^{1,\infty}$, and the 
number of intervals of local maxima or minima decreases in time, we conclude that the 
right hand side approaches zero as $\mu,\nu\to\infty$.

The convergence of $u_\nu\to u$ now follows by means of the standard error formula \cite[Theorem 2.9]{Bbook}
\bel{numu}
    \bigl\| S^\mu_T \bar u - S^\nu_T \bar u \bigr\|_{\L^1} \le \int_0^T  
    \left\{ \liminf_{h\to 0+}  \frac{ \bigl\| S^\mu_{t+h}\bar u - S^\nu_h S^\mu_{t}\bar u
    \bigr\|_{\L^1} }{h}\right\}  dt,
\eeq
valid for all $T>0$.   This  yields (\ref{St}), for piecewise constant initial data $\bar u$.

\v
{\bf 2.} Since every $S^\nu$ is a contractive semigroup, taking the limit as $\nu\to\infty$  
we conclude
that $S$ satisfies the semigroup property $S_t S_s \bar u= S_{s+t}\bar u$, together
with the contraction property (\ref{contr2}),  for every
piecewise constant initial data $\bar u,\bar v$.
\v
{\bf 3.} It remains to prove that the convergence holds for all initial data $\bar u\in \L^1(\R)$.
For this purpose, let $\epsilon>0$ be given. Choose a piecewise constant $\bar v$ 
such that $\|\bar v-\bar u\|_{\L^1}\leq \epsilon$. 
Since every $S^\mu$ is a contractive semigroup, for every $t\geq 0$
we have
\begin{align*}
\limsup_{\mu,\nu\to\infty} \|S^\mu_t \bar u - S^\nu_t\bar u\|_{\L^1}~&\leq~
 \limsup_{\mu\to\infty} \|S^\mu_t \bar u - S^\mu_t\bar v\|_{\L^1}
 +  \limsup_{\mu,\nu\to\infty} \|S^\mu_t \bar v - S^\nu_t\bar v\|_{\L^1} \\
& \qquad  +\limsup_{\nu\to\infty} \|S^\nu_t \bar v - S^\nu_t\bar u\|_{\L^1}\\
 \qquad& \leq ~\epsilon + 0 + \epsilon.
\end{align*}
%
%
Since $\epsilon>0$ was arbitrary, this proves that the sequence $S^\nu_t\bar u$ is Cauchy and hence converges to a unique limit. 
\end{proof}

\begin{remark}\label{r:22}
{\rm Entirely similar results hold in the spatially periodic case.  For a fixed $\nu\geq 1$,
consider the piecewise affine 
approximations  $f_\nu, g_\nu$,
defined as in (\ref{fnudef}). These determine a contractive semigroup $S^\nu: \L^1_{per}(\R)\times \R_+\mapsto \L^1_{per}(\R)$.
As $\nu\to\infty$,
for any initial data $\bar u\in \L^1_{per}(\R)$, we have the convergence to a unique limit (\ref{St}).
The limit semigroup satisfies the contraction property:
\bel{contr22}
\bigl\|S_t \bar u - S_t\bar v\bigr\|_{\L^1_{per}}~\leq~\bigl\| \bar u - \bar v\bigr\|_{\L^1_{per}}\,.\eeq
}\end{remark}

We conclude this section with a standard error bound valid for front tracking approximations.


\begin{proposition}
\label{p:22} 
Let $\bar u$ be a piecewise constant initial data, with compact support.
Call $t\mapsto u(t)=S_t\bar u$ the exact semigroup solution, and let $u_\nu(t,\cdot)=S_t^\nu\bar u$.
Then, for any $t\ge 0$ we have
\bel{err1}\bigl\| u_\nu(t) - S_t \bar u\bigr\|_{\L^1_{per}}~=~\O(1)\cdot \TV\{\bar u\} \cdot 2^{-\nu} t.\eeq
\end{proposition}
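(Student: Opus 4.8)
The plan is to bound the error $\|u_\nu(t) - S_t\bar u\|_{\L^1}$ by comparing $S_t^\nu$ (the front-tracking semigroup for fixed $\nu$, built from the piecewise affine fluxes $f_\nu, g_\nu$) against the exact limit semigroup $S_t = \lim_{\mu\to\infty} S_t^\mu$, and to exploit the fact that $u_\nu(t,\cdot) = S_t^\nu\bar u$ is itself an \emph{exact} trajectory of $S^\nu$. By the standard error estimate \cite[Theorem 2.9]{Bbook}, for any $\mu \geq \nu$ one has
\bel{p22pf1}
\bigl\| S_T^\mu \bar u - S_T^\nu \bar u\bigr\|_{\L^1} ~\leq~ \int_0^T \left\{\liminf_{h\to 0+} \frac{\bigl\| S^\mu_{T-t}\bigl(S_h^\nu u_\mu(t)\bigr) - S^\mu_{T-t}\bigl(u_\mu(t+h)\bigr)\bigr\|_{\L^1}}{h}\right\} dt \,,
\eeq
where I write $u_\mu(t) = S_t^\mu\bar u$; since $S^\mu$ is contractive this reduces to the integral of the instantaneous error rate $\liminf_{h\to 0+} \frac1h\|S_h^\nu u_\mu(t) - u_\mu(t+h)\|_{\L^1}$. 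One then lets $\mu\to\infty$ (using $S_T^\mu\bar u \to S_T\bar u$ from Theorem~\ref{t:21}) and passes the limit inside, so that in the end I must control how fast the \emph{exact} flow $u(t+h) = S_{t+h}\bar u$ departs from the frozen-$\nu$ flow $S_h^\nu u(t)$, integrated over $[0,T]$.

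The instantaneous error rate splits into two contributions, mirroring the right-hand side of \eqref{ees}. First, the difference between the true fluxes $f,g$ and the piecewise affine $f_\nu, g_\nu$ on smooth arcs contributes $\O(1)\cdot\|(f-f_\nu, g-g_\nu)\|_{W^{1,\infty}}\cdot\TV\{u(t,\cdot)\}$. Since $f,g$ are $\C^2$ and $f_\nu, g_\nu$ are their piecewise affine interpolants at mesh $2^{-\nu}$, a Taylor estimate gives $\|f-f_\nu\|_{W^{1,\infty}} + \|g-g_\nu\|_{W^{1,\infty}} = \O(1)\cdot 2^{-\nu}$ on the compact range $[-\|\bar u\|_{\L^\infty}, \|\bar u\|_{\L^\infty}]$, and by Proposition~\ref{p:21}(ii) the total variation stays bounded by $\TV\{\bar u\}$. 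Second, there is the error from the local-max/min intervals, where the flux genuinely switches between $f$ and $g$: here the discrepancy is $\O(1)\cdot\|(f-f_\nu, g-g_\nu)\|_{\L^\infty}\cdot N(t)$, and since $f,g\in\C^2$ this again is $\O(1)\cdot 2^{-\nu}\cdot N(0)$ because $N(t)\leq N(0)$. Because $\bar u$ has compact support and finite total variation, both $\TV\{\bar u\}$ and $N(0)$ are finite constants, so the total instantaneous rate is $\O(1)\cdot 2^{-\nu}$, and integrating over $[0,t]$ yields \eqref{err1}.

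The main obstacle — and the place requiring genuine care rather than bookkeeping — is justifying the interchange of the limit $\mu\to\infty$ with the time integral in \eqref{p22pf1}, and more subtly verifying that the instantaneous error rate of the \emph{exact} semigroup $S$ against $S^\nu$ is still controlled by the same quantities (total variation and number of extrema), even though $S_t\bar u$ need no longer be piecewise constant. The cleanest route is to avoid passing to the $\mu\to\infty$ limit directly in the error formula: instead, estimate $\|S_T^\mu\bar u - S_T^\nu\bar u\|_{\L^1}\leq \O(1)\cdot(\TV\{\bar u\} + N(0))\cdot 2^{-\nu}\cdot T$ uniformly in $\mu\geq\nu$ — the bound on the right being independent of $\mu$ because $\|f_\mu - f_\nu\|_{W^{1,\infty}}\leq \|f_\mu - f\|_{W^{1,\infty}} + \|f - f_\nu\|_{W^{1,\infty}} = \O(1)\cdot 2^{-\nu}$, and similarly for the $\L^\infty$ term and for $g$, while $\TV\{u_\mu(t,\cdot)\}\leq\TV\{\bar u\}$ and $N_\mu(t)\leq N_\mu(0)\leq N(0) + \O(1)$ hold for all $\mu$ — and then let $\mu\to\infty$ in this already-uniform bound, using only $S_T^\mu\bar u\to S_T\bar u$ in $\L^1$. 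This sidesteps any measure-theoretic subtlety about the limit trajectory. One should double-check that the count $N_\mu(0)$ of local extrema of the discretized initial data $\bar u_\mu$ can indeed be taken $\leq N(0)$ (or at worst $N(0)+\O(1)$): this is true if the approximating sequence $\bar u_\mu$ is chosen to preserve monotonicity structure, which the construction in \eqref{bunu}--\eqref{1lim} permits.
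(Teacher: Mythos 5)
Your proof is correct, but it takes a genuinely different route from the paper's. The paper applies the standard error formula directly with the \emph{exact} contractive semigroup $S$ in the role of the good evolution and the front-tracking trajectory $u_\nu$ in the role of the approximate one: it bounds $\liminf_{h\to 0+} h^{-1}\|u_\nu(\tau+h) - S_h u_\nu(\tau)\|_{\L^1}$ by $\TV\{\bar u\}\cdot\sup_v(|f'-f'_\nu|+|g'-g'_\nu|) = \O(2^{-\nu})\TV\{\bar u\}$, and integrates over $[0,t]$. You instead reuse the estimate \eqref{ees} from Theorem~\ref{t:21} to bound $\|S_T^\mu\bar u - S_T^\nu\bar u\|_{\L^1}$ uniformly in $\mu\ge\nu$, and only then let $\mu\to\infty$, using $S^\mu_T\bar u\to S_T\bar u$. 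What your route buys you is that you never have to reason about the instantaneous behavior of the exact semigroup $S$ applied to a piecewise constant state — everything is phrased purely in terms of finite-$\nu$ front-tracking objects, and the limit is taken only at the very end on an already-uniform bound. This is a reasonable way to sidestep possible worries about the regularity of limit trajectories.

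Two small points worth tightening. First, your stated uniform bound carries a $\bigl(\TV\{\bar u\}+N(0)\bigr)$ factor, whereas the target is $\TV\{\bar u\}$ alone. To make the $N(0)$ piece subdominant you should use the sharp estimate for piecewise affine interpolation of a $\C^2$ function, namely $\|f-f_\nu\|_{\L^\infty} = \O(2^{-2\nu})$ (not merely $\O(2^{-\nu})$), so that the local-max/min contribution is $\O(2^{-2\nu})\cdot N(0)$, which for fixed $\bar u$ and $\nu$ large is absorbed into the $\O(1)\cdot\TV\{\bar u\}\cdot 2^{-\nu}$ term. (The paper's own proof silently does the same absorption, since it drops the $N(t)$ term entirely from the instantaneous rate.) Second, your display \eqref{p22pf1} has an extra $S^\mu_{T-t}$ composed outside the difference; as you note, contractivity of $S^\mu$ removes it, but what results is exactly the paper's formula \eqref{numu} with roles of $\mu$ and $\nu$ swapped — i.e., the small semigroup acting over time $h$ must be the contractive one, here $S^\nu$, while the approximate trajectory is $S^\mu_\cdot\bar u$. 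Your verbal description matches this; just make sure the final inequality is written in the form that matches the roles in \eqref{ees}.
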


\begin{proof} 
Since the semigroup $S$ is contractive, the 
standard error formula \cite{Bbook} yields  
$$
    \bigl\| u_\nu(t) - S_t \bar u \bigr\|_{\L^1_{per}} \le \int_0^t
    \left\{ \liminf_{h\to 0+}  \frac{ \bigl\| u_\nu(\tau+h) - S_h u_\nu(\tau)
    \bigr\|_{\L^1_{per}} }{h}\right\}  d\tau\,.$$
For every $\tau\in [0,t]$\,, the instantaneous error rate is bounded by
\begin{align*}
\liminf_{h\to 0+} & \frac{\bigl\|u_\nu (\tau+h) - S_h u_\nu(\tau)\bigr\|_{\L^1_{per}}}{ h} \\
&\leq~\TV\{\bar u\}\cdot \sup_v~\Big(\bigl| f'(v)-f'_\nu(v)\bigr|+\bigl| g'(v)-g'_\nu(v)\bigr|\Big)  \\
&=~\O(1)\cdot \TV\{\bar u\}\cdot 2^{-\nu}\,.
\end{align*}
Combining the previous two estimates, we obtain (\ref{err1}).
\end{proof}

\section{Properties of solutions}
\label{sec:3}
\setcounter{equation}{0}

By further analyzing the behavior of front tracking approximations constructed in the previous section, 
several properties of semigroup solutions can be established.

\begin{proposition}\label{p:40}
Let $f,g$ satisfy {\bf (A1)}.  Then for any initial data $\bar u\in\L^1(\R)$, the 
semigroup solution satisfies
\begin{align} 
\bigl\| S_\tau\bar u\bigr\|_{\L^\infty}~<~\infty\quad\quad&\forall \tau>0, \label{Sp1} \\
S_\tau\bar u =0 \quad\qquad &\forall \tau ~\geq~ \frac 1 {c_0} {\int_{\R} |\bar u(x)|\, dx}\,. \label{Sp2}
\end{align}
\end{proposition}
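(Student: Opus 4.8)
The plan is to derive both properties from the front tracking approximations $u_\nu$, using the quantitative decay rates already established in the construction, and then passing to the limit $\nu\to\infty$ via the $\L^1$-convergence of Theorem~\ref{t:21}. The two assertions concern the limit semigroup, so the natural route is to prove uniform-in-$\nu$ bounds first.

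\textbf{Step 1 (instantaneous $\L^\infty$ smoothing, giving \eqref{Sp1}).} Fix $\tau>0$. By Proposition~\ref{p:21}(ii), $\|u_\nu(t,\cdot)\|_{\L^\infty}$ and $\TV\{u_\nu(t,\cdot)\}$ are non-increasing, so it suffices to bound $\|u_\nu(\tau,\cdot)\|_{\L^\infty}$ uniformly in $\nu$ in terms of $\tau$ and $\|\bar u\|_{\L^1}$. The key mechanism is that a positive part of the profile sits on an interval where a local max is attained, and there the value decreases at rate $\ge c_0/\ell(t)$ by \eqref{24}; meanwhile, by conservation of (signed) area between sign-change points (cf.\ \eqref{dtarea}), the positive mass $\int (u_\nu)_+\,dx$ is non-increasing. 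Roughly: the height $M(t)=\|u_\nu(t,\cdot)\|_{\L^\infty}^+$ of the tallest local max is supported over an interval of length $\le \|\bar u\|_{\L^1}/M(t)$ (area bound), hence $M$ satisfies a differential inequality of the form $\dot M \le -c_0 M/\|\bar u\|_{\L^1}$ once it is the governing extremum — more precisely, integrating \eqref{24} against the worst interval length yields $M(\tau)\le C\|\bar u\|_{\L^1}/(c_0\tau)$, uniformly in $\nu$. The symmetric argument for local minima controls the negative part. Passing to the limit gives \eqref{Sp1} with an explicit bound $\|S_\tau\bar u\|_{\L^\infty}=\O(1)\cdot\|\bar u\|_{\L^1}/(c_0\tau)$.

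\textbf{Step 2 (finite extinction time, giving \eqref{Sp2}).} By Proposition~\ref{p:21}(i) the total mass functional $\|u_\nu(t,\cdot)\|_{\L^1}$ is non-increasing, and more sharply one controls its decay. Split $\R$ into the intervals $(y_{k},y_{k+1})$ of constant sign of $u_\nu$ as in Figure~\ref{f:df27}. On each such interval the argument leading to \eqref{dtarea} in fact shows
\[
\frac{d}{dt}\int_{y_{2k}(t)}^{y_{2k+1}(t)}u_\nu(t,x)\,dx~\le~f_\nu(0)-g_\nu(0)~\le~-c_0,
\]
\emph{provided} this interval still carries a local maximum of $u_\nu$ with positive value (i.e.\ the interval is nonempty and $u_\nu>0$ on part of it); an interval that has shrunk to zero contributes nothing further. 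Summing the analogous inequalities over the (at most $N(0)$, but we only need: over the nonempty) positive and negative blocks, one gets $\frac{d}{dt}\|u_\nu(t,\cdot)\|_{\L^1}\le -c_0$ as long as $u_\nu(t,\cdot)\not\equiv 0$. Hence $\|u_\nu(t,\cdot)\|_{\L^1}=0$ for $t\ge \|\bar u_\nu\|_{\L^1}/c_0$, and since $u_\nu\equiv 0$ is a (trivial) front tracking solution, $u_\nu(t,\cdot)\equiv 0$ for all such $t$. Letting $\nu\to\infty$ and using $\|\bar u_\nu\|_{\L^1}\to\|\bar u\|_{\L^1}$ together with $\L^1$-convergence gives $S_\tau\bar u=0$ for $\tau\ge \|\bar u\|_{\L^1}/c_0$, which is \eqref{Sp2}. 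For general $\bar u\in\L^1(\R)$ one approximates by piecewise constant data and uses the contraction \eqref{contr2}.

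\textbf{Main obstacle.} The delicate point is the bookkeeping in Step 2: one must be sure that the bound $\frac{d}{dt}\int u_\nu\,dx\le -c_0$ holds on \emph{every} sign-block that still contains positive (resp.\ negative) mass, and that blocks which have collapsed do not secretly re-contribute — this is exactly where Lemma~\ref{l:21} (uniform positivity of interval lengths before a front vanishes) and the monotonicity of $N(t)$ are needed. A second subtlety is that in Step 1 the "governing extremum" may change over time as fronts merge; one handles this by working with the decreasing quantity $\sup$ over all current local maxima and checking the differential inequality holds in the a.e./one-sided sense across restarting times, which is legitimate because $t\mapsto u_\nu(t,\cdot)$ is Lipschitz into $\L^1$ and the number of restartings is finite by Lemma~\ref{l:22}.
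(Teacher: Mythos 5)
Your Step 1 contains a genuine gap, and in fact its stated conclusion is false. From $\ell(t)\le \|\bar u\|_{\L^1}/M(t)$ and \eqref{24} you only get the differential inequality $\frac{d}{dt}M(t)\le -c_0\,M(t)/\|\bar u\|_{\L^1}$, and integrating this gives exponential decay, $M(\tau)\le M(0)\,e^{-c_0\tau/\|\bar u\|_{\L^1}}$; there is no blow-down from infinity, so the step ``integrating yields $M(\tau)\le C\|\bar u\|_{\L^1}/(c_0\tau)$'' does not follow. Worse, no bound of $\|S_\tau\bar u\|_{\L^\infty}$ in terms of $\|\bar u\|_{\L^1}$ and $\tau$ alone can hold for $\tau<\|\bar u\|_{\L^1}/c_0$: take $f\equiv 0$, $g\equiv c_0$ (allowed by {\bf (A1)}, and mentioned in the remark after this proposition) and $\bar u$ a rectangular bump of height $H$ and width $A/H$, so $\|\bar u\|_{\L^1}=A$. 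Then both edges are stationary, the plateau value decreases at the constant rate $c_0H/A$ by \eqref{24}, and at $\tau=A/(2c_0)$ the solution still has height $H/2$, which is unbounded as $H\to\infty$ with $A,\tau$ fixed. Since the entire content of \eqref{Sp1} is the case $\bar u\in\L^1\setminus\L^\infty$, where the approximating data satisfy $\|\bar u_\nu\|_{\L^\infty}\to\infty$, your exponential bound carries no uniform-in-$\nu$ information and \eqref{Sp1} is left unproved: the decay rate of the height degenerates when the plateau is long, so tracking the running maximum cannot produce an instantaneous $\L^\infty$ bound.

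The repair -- and the paper's actual argument -- is to fix a level before invoking the decay: given $\tau>0$, choose $M$ so large that $\int_{|\bar u|>M}|\bar u|\,dx<c_0\tau$ as in \eqref{ic0} (possible for any $\bar u\in\L^1$), and run the \eqref{dtarea}-type computation on each connected component $[a(t),b(t)]$ of the super-level set $\{x:\,u_\nu(t,x)>M\}$: at $a(t)$ the relevant flux is $f_\nu$, at $b(t)$ it is $g_\nu$, and Liu admissibility with $M$ as intermediate value shows that the mass above level $M$ in each component decreases at the \emph{absolute} rate $\ge c_0$, independently of the height of the profile. Since the initial mass above level $M$ is $<c_0\tau$, the set $\{u_\nu(\tau,\cdot)>M\}$ is empty, uniformly in $\nu$; the set $\{u_\nu<-M\}$ is symmetric, and passing to the limit gives \eqref{Sp1}. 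Your Step 2, by contrast, is essentially sound and is in substance the paper's own proof of \eqref{Sp2}: the paper applies the same level-set computation at level $\tilde\ve>0$ and lets $\tilde\ve\to 0$, while you sum the sign-block version of \eqref{dtarea} at level $0$; either way the mass decays at rate at least $c_0$ while the solution is nonzero, and the bookkeeping across the finitely many restarting times is covered by Lemma~\ref{l:22}, as you note.
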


\begin{proof}
{\bf 1.}
It suffices to prove that the above properties are satisfied by all front tracking approximations $u_\nu$,
uniformly w.r.t.~$\nu$.  
By earlier analysis, the norm $\bigl\|u_\nu(t,\cdot)\bigr\|_{\L^\infty}$ is non-increasing in time.
To prove it for any initial data $\bar u\in\L^1(\R)$, 
let $\tau>0$ be given. Choose $M>0$ large enough so that 
\bel{ic0}\int_{\left|\bar u(x)\right|>M} |\bar u(x)|\, dx~<~c_0 \tau.\eeq

Let $0<t<\tau$. Consider any interval $\bigl[a(t), b(t)\bigr]$ which is a connected component of the set 
$$I(t)~\doteq~\Big\{ x\in \R\,;~u_\nu(t,x)>M\Big\}.$$
At $x=a(t)$, either $u_\nu$ is continuous with $u_\nu\bigl(t,a(t)\bigr)=M$ or an upward jump occurs, 
with $u_\nu\bigl(t,a(t)-\bigr)\le M$.
In both cases the 
flux is $f_\nu$. 
At $x=b(t)$ the behavior of $u_\nu$ is analogous, with 
flux $g_\nu$.
As in the proof of (\ref{dtarea}), recalling that all  jumps are Liu-admissible 
and that $g_\nu(v) -f_\nu (v)\geq c_0$ for all $v\in\R$, we have
%
$$\frac{d}{ dt} \int_{a(t)}^{b(t)} u_\nu(t,x)\, dx
~\leq~{-\bigl(g_\nu(M) -f_\nu (M)\bigr)}
~\leq~- c_0\,.$$
By (\ref{ic0}), if  $\meas \bigl(I(\tau)\bigr)>0$, we get a contradiction. 
{A similar analysis holds for the set $\Big\{ x\in \R\,;~u_\nu(t,x)<-M\Big\}$.} 
Since this is true for
any front tracking approximation, taking a limit we obtain (\ref{Sp1}).

{\bf 2.} To prove (\ref{Sp2}), consider any $\tilde\ve>0$ arbitrarily small. 
For any $\tau>0$ that satisfies (\ref{ic0}),
the above argument can be repeated, leading to 
$$\meas \Big(\bigl\{ x\in\R\,;~|u_\nu(\tau,x)|>\tilde\ve \bigr\}\Big)=0 \qquad\hbox{whenever} \quad \tau\,\geq\,  \tau_{\tilde\ve}\,\doteq
\,\frac 1 {c_0} {\int_{\{|\bar u(x)|>\tilde\ve \}} |\bar u(x)|\, dx}\,.$$ 
Letting $\nu\to\infty$ one obtains 
$\bigl\|S_\tau\bar u \bigr\|_{\L^\infty}\leq \tilde\ve$  for any $\tau\geq\tau_{\tilde\ve}$.
In particular, taking $\tilde\ve=0$ we obtain (\ref{Sp2}). 
\end{proof}

\begin{remark} {\rm  The decay properties stated in Proposition~\ref{p:40} have nothing to do with 
the genuine nonlinearity of the fluxes.   In particular, they remain valid  even if
$f(u)=0$, $g(u)=c_0>0$ for all $u\in\R$.}  \end{remark} 

\begin{proposition}\label{p:42}
Let $f,g$ satisfy {\bf (A1)}.   Then for any initial data $\bar u\in\L^1(\R)$, the 
semigroup solution has locally bounded variation:
\bel{Sp3} \TV\bigl\{S_\tau\bar u\,;~~[a,b]\bigr\}~<~\infty\eeq
for any interval $[a,b]$ and every $\tau>0$.
\end{proposition}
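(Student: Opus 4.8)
The plan is to prove \eqref{Sp3} by reducing it, via the $\L^1$-contractive approximation scheme already developed, to a uniform bound on the total variation of the front tracking solutions $u_\nu$ on the given interval, after a short positive time. First I would recall that for piecewise constant initial data with bounded total variation, Proposition~\ref{p:21}(ii) already gives $\TV\{u_\nu(\tau,\cdot)\}\le\TV\{\bar u\}$, uniformly in $\nu$, so the difficulty is entirely in handling general $\bar u\in\L^1(\R)$, whose total variation may be infinite. The key mechanism to exploit is the regularization already visible in the earlier propositions: by Proposition~\ref{p:40}, $\|S_\tau\bar u\|_{\L^\infty}<\infty$ for every $\tau>0$, and the ODEs \eqref{24}, \eqref{26}, together with Lemma~\ref{l:21}, show that the equation is strongly smoothing near local extrema. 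So I expect the total variation to become finite instantaneously, even though it was infinite at $t=0$.

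The main step I would carry out is a quantitative estimate, uniform in $\nu$, of the form
\bel{eq:tvbound}
\TV\bigl\{u_\nu(\tau,\cdot)\,;~[a,b]\bigr\}~\leq~C(\tau,a,b)\qquad\hbox{for all}~\nu\geq 1,
\eeq
for every fixed $\tau>0$ and every bounded interval $[a,b]$. The route is to split the graph of $u_\nu(\tau,\cdot)$ over $[a,b]$ according to the partition into intervals of local maxima/minima (the set $\M$ appearing in the proof of Proposition~\ref{p:21}) and the complementary ``monotone'' pieces. On each monotone piece, the total variation is just the jump in value between consecutive extrema; summing over the pieces inside $[a,b]$, the total contribution is at most $2(N_\nu(\tau)+1)\|u_\nu(\tau,\cdot)\|_{\L^\infty}$, where $N_\nu(\tau)$ is the number of extremal intervals meeting $[a,b]$. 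So it suffices to bound $N_\nu(\tau)$ uniformly in $\nu$. Here I would use Lemma~\ref{l:21}: for $t\ge\tau$, each bounded extremal interval of $u_\nu(t,\cdot)$ has length at least the uniform bound $\ell(\tau)>0$ — but this bound depends on $\ell_\nu(0)$, so instead I should argue via the integrated decay estimate directly. Namely, the ODEs \eqref{24}–\eqref{26} force the values of the local maxima to decrease, and of the local minima to increase, at rate at least $c_0/(\hbox{interval length})$; combining this with the finite $\L^\infty$ bound from Proposition~\ref{p:40} and the finite-propagation-speed comparison exactly as in the second step of the proof of Lemma~\ref{l:21}, one gets, for any $t>0$, that all bounded extremal intervals present at time $t$ have length bounded below by a constant $\ell_*(t)>0$ depending only on $t$, $c_0$, and $\|S_{t/2}\bar u\|_{\L^\infty}$ — \emph{not} on $\nu$ or on the initial data beyond that $\L^\infty$ bound. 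Consequently the number of such intervals intersecting $[a,b]$ is at most $(b-a)/\ell_*(\tau)+2$, which gives \eqref{eq:tvbound}.

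Finally I would pass to the limit. Fix $\tau>0$ and $[a,b]$. By Theorem~\ref{t:21} (or Remark~\ref{r:22} in the periodic case), $u_\nu(\tau,\cdot)\to S_\tau\bar u$ in $\L^1$, hence along a subsequence pointwise a.e.; lower semicontinuity of total variation under $\L^1$ convergence then yields
\bel{eq:lsc}
\TV\bigl\{S_\tau\bar u\,;~[a,b]\bigr\}~\leq~\liminf_{\nu\to\infty}\TV\bigl\{u_\nu(\tau,\cdot)\,;~[a,b]\bigr\}~\leq~C(\tau,a,b)~<~\infty,
\eeq
which is \eqref{Sp3}. The one delicate point — the main obstacle — is making the lower bound $\ell_*(\tau)$ on the extremal interval lengths genuinely independent of $\nu$ and of the (possibly unbounded) total variation of $\bar u$: the argument in Lemma~\ref{l:21} as written starts from $\ell_0=\ell(0)>0$, which is meaningless when $\bar u$ has infinitely many oscillations. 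The fix is to run the estimate starting from an intermediate time $\tau/2$ rather than from $0$, using finite propagation speed to propagate the shrinking-interval contradiction backward in time and the uniform $\L^\infty$ bound of Proposition~\ref{p:40} at time $\tau/2$ as the only input; any extremal interval alive at time $\tau$ that were shorter than the threshold would have produced, over $[\tau/2,\tau]$, a variation of $u_\nu$ exceeding $2\|u_\nu(\tau/2,\cdot)\|_{\L^\infty}$, a contradiction. Carrying this book-keeping through carefully is the crux; the rest is routine.
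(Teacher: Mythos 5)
Your proposal is correct, and it establishes the key quantitative bound on the number $N_\nu(\tau)$ of extremal intervals by a genuinely different mechanism than the paper. Both arguments share the same skeleton: reduce to $\L^\infty$-bounded data by writing $S_\tau\bar u=S_{\tau/2}(S_{\tau/2}\bar u)$ and invoking Proposition~\ref{p:40}, bound $\TV$ on $[a,b]$ by $2M(1+N_\nu(\tau))$, and conclude by $\L^1$-lower semicontinuity of total variation. Where you diverge is in bounding $N_\nu(\tau)$: the paper works with the weighted quantity $\sum_k M_k(t)\bigl(b_k(t)-a_k(t)\bigr)$ (the ``mass'' carried by the local-maximum plateaus over a shrinking spatial cone), shows its time derivative is $\leq -N(\tau)\,c_0$, and integrates over $[0,\tau]$ to get $N(\tau)\leq \frac{2M(b-a+\hat\lambda\tau)}{\tau c_0}$. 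You instead restart the argument of Lemma~\ref{l:21} from the intermediate time $\tau/2$ to obtain a lower bound $\ell_*(\tau)>0$ on the length of \emph{each} extremal interval alive at time $\tau$, depending only on $\tau$, $c_0$, the wave-speed bound, and $\|S_{\tau/2}\bar u\|_{\L^\infty}$ (not on $\nu$), and then count by pigeonhole: $N_\nu(\tau)\leq (b-a)/\ell_*(\tau)+2$. This trades the paper's linear-in-$M/c_0$ bound on $N(\tau)$ for one that is exponential in $M/c_0$ (since $\ell_*(\tau)\sim \lambda^\dagger\tau/(e^{4\lambda^\dagger M/c_0}-1)$), which is harmless for proving finiteness. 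It also has the merit of sidestepping the bookkeeping of the boundary terms $M_k\cdot\frac{d}{dt}(b_k-a_k)$ that arise when differentiating the paper's weighted sum. The ``crux'' you flag — making $\ell_*(\tau)$ independent of $\nu$ when $\bar u$ has unbounded variation — is handled correctly by integrating the ODE \eqref{24} backward from $\tau$ only to $\tau/2$ rather than to $0$; the one technical point worth making explicit in a full write-up is that a local-max value $\Hat u_k(\cdot)$ and its plateau can be traced backward in time through interaction/merger events with the finite-propagation-speed bound $\ell(t)\leq\ell(\tau)+2\lambda^\dagger(\tau-t)$ intact (mergers only shrink the tracked interval as one goes backward), so that the decay estimate and the $\L^\infty$ bound combine as you describe.
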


\begin{proof}
Let $\bar u\in \L^1(\R)$ be given. It suffices to prove that the total 
variation of front tracking approximations $u_\nu$ satisfies a uniform bound,
independent of $\nu$.

{\bf 1.} 
We have $S_\tau\bar u =S_{\tau/2} \bar v$, with $\bar v=S_{\tau/2}\bar u\in \L^1(\R)\cap \L^\infty(\R)$, as stated in Proposition~\ref{p:40}.  
Therefore, without loss of generality we can assume that 
$\|\bar u\|_{L^\infty}\leq M$ for some constant $M$. 
Let $\bar\lambda>0$
be an upper bound on all wave speeds, as long as $|u|\leq M$.

{\bf 2.} Let $\tau>0$ and an interval $[a,b]$ be given.
Consider the intervals
$$I(t)~\doteq ~\bigl[ a-\bar\lambda(\tau-t)\,,~b+\bar\lambda(\tau-t)\bigr].$$

Consider any front tracking approximation $u_\nu:\R_+\times\R\mapsto [-M,M]$.
Call $N(t)$ the number of intervals, where $u_\nu(t,\cdot)$ 
attains a local max or a local min, which are entirely contained inside $I(t)$.
Then
\bel{TVbd}\TV\bigl\{ u_\nu(t,\cdot)\,;~I(t)\bigr\}~\leq~2M\cdot \bigl(1+N(t)\bigr).\eeq
We observe that $N(t)$ is a non-increasing function of time. Hence $N(\tau)\leq N(t)$.

Consider the intervals $\bigl[ a_k(t), b_k(t)\bigr]$ where $u_\nu(t)$ attains a local maximum
$M_k(t)$.   
By construction, we have
$$\frac{d}{ dt} M_k(t)\cdot \bigl(b_k(t)- a_k(t)\bigr)~\leq~-c_0\,.$$
Therefore, since at every time $t\in \,[0,\tau]$ there are $N(t)\geq N(\tau)$ of these intervals, 
we have
$$\frac{d}{ dt} \sum_k M_k(t) \bigl(b_k(t)- a_k(t)\bigr)~\leq~- N(t) c_0 ~\leq ~- N(\tau) \, c_0\,.$$
Observing that 
\begin{align*}
\sum_k M_k(0) \bigl(b_k(0)- a_k(0)\bigr)~&\leq~ M(b-a+2\hat\lambda \tau),\\
  \sum_k M_k(\tau) \bigl(b_k(\tau)- a_k(\tau)\bigr)~&\geq~-M(b-a),
\end{align*} 
we conclude
\bel{mmi}\tau\cdot N(\tau) c_0~\leq~2M(b-a+\hat\lambda\tau),
\qquad\qquad N(\tau)~\leq~\frac{2M(b-a+\hat\lambda\tau)}{ \tau c_0}\,.\eeq
By (\ref{TVbd}), this yields a bound on the total variation, independent of $\nu$.
\end{proof}
\medskip

From the uniform bound (\ref{mmi}), valid for any front tracking approximation and hence also for the limit solution,
we obtain 

\begin{proposition}\label{p:33}
Let $f,g$ satisfy {\bf (A1)}.   Then for any initial data $\bar u\in\L^1(\R)$, the 
semigroup trajectory $u(t,\cdot)= S_t\bar u$ provides a weak solution to the Cauchy problem (\ref{1}), (\ref{idata}),
in the sense of Definition~\ref{def:11}.\end{proposition}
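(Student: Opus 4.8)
The plan is to verify the two requirements of Definition~\ref{def:11} by passing to the limit in the front tracking approximations — first for piecewise constant initial data, and then, by density and the contraction property (\ref{contr2}), for arbitrary $\bar u\in\L^1(\R)$. Fix a piecewise constant $\bar u$ with compact support and set $u_\nu=S^\nu_\cdot\bar u$. As established in the proof of Proposition~\ref{p:21}, $u_\nu$ is an entropy weak solution of the scalar law $u_t+(F_\nu(t,x,u))_x=0$ with the interpolated flux (\ref{ipflux}), so that for every $\vp\in\C^1_c(\,]0,T[\,\times\R)$
\[
\dint\Big\{u_\nu\,\vp_t+\big[\theta_\nu(t,x)\,f_\nu(u_\nu)+(1-\theta_\nu(t,x))\,g_\nu(u_\nu)\big]\vp_x\Big\}\,dx\,dt~=~0.
\]
By Proposition~\ref{p:21}(ii) the functions $u_\nu(t,\cdot)$ are equibounded in $\L^\infty$ and in total variation, and by Theorem~\ref{t:21} $u_\nu\to u=S\bar u$ in $\L^1_{loc}$; since $f_\nu\to f$ and $g_\nu\to g$ uniformly on $[-\|\bar u\|_{\L^\infty},\|\bar u\|_{\L^\infty}]$, also $f_\nu(u_\nu)\to f(u)$ and $g_\nu(u_\nu)\to g(u)$ in $\L^1_{loc}$. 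The functions $\theta_\nu$ take values in $[0,1]$, hence along a subsequence $\theta_\nu\rightharpoonup\Theta$ weak-$*$ in $\L^\infty_{loc}$ with $0\le\Theta\le1$; multiplying the weak-$*$ convergent $\theta_\nu$ against the strongly convergent $f_\nu(u_\nu)\vp_x$, $g_\nu(u_\nu)\vp_x$, we may pass to the limit and obtain (\ref{weaksol}) for $u$ with this $\Theta$. Continuity of $t\mapsto u(t,\cdot)$ into $\L^1_{loc}$ and the initial condition (\ref{idata}) are inherited from the approximations.

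It then remains to check the sign condition (\ref{18}): $\Theta=1$ a.e.\ where $u_x>0$ and $\Theta=0$ a.e.\ where $u_x<0$ (on $\{u_x=0\}$ — i.e.\ on the plateaus produced by local maxima and minima — no condition is imposed, and there $\Theta$ is the affine interpolant carried over from (\ref{th-n-1})--(\ref{th-n-2})). By construction $\theta_\nu(t,\cdot)\equiv1$ on every maximal nondecreasing interval of $u_\nu(t,\cdot)$, $\equiv0$ on every maximal nonincreasing one, and differs from $0$ and $1$ only on the intervals where $u_\nu$ attains a local extremum. The key point is that, by the bound (\ref{mmi}), the number of such extremum intervals — hence the number of maximal monotonicity intervals of $u_\nu(t,\cdot)$ — is bounded uniformly in $\nu$ on every strip $[\tau,T]\times[a,b]$ with $\tau>0$; consequently the negative variation of $u_\nu(t,\cdot)$ is carried by boundedly many decreasing runs and cannot develop persistent fine oscillations. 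Combining this with $u_\nu\to u$ in $\L^1$ and the $BV_{loc}$ regularity of $u$ from Proposition~\ref{p:42}, one shows that on any space--time box centered at a Lebesgue point where $u$ has a strictly positive approximate derivative the set $\{\theta_\nu<1\}$ has measure tending to $0$, whence $\Theta=1$ a.e.\ there; the case $u_x<0$ is symmetric. This step — excluding surviving downward oscillations of $u_\nu$ in regions where $u$ is increasing — is the main obstacle, and it is precisely where the uniform bound (\ref{mmi}) on the number of monotonicity intervals, rather than a mere bound on $\TV\{u_\nu\}$, must be used.

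Finally, for arbitrary $\bar u\in\L^1(\R)$ choose piecewise constant $\bar u_k\to\bar u$ in $\L^1$ with compact support; by (\ref{contr2}), $S_t\bar u_k\to S_t\bar u$ in $\L^1$ for every $t$. By Propositions~\ref{p:40} and~\ref{p:42}, for each $\tau>0$ the solutions $S_\cdot\bar u_k$ are equibounded in $\L^\infty$ and in $\TV$ on $[\tau,T]\times[a,b]$ — the constants in (\ref{ic0}) and (\ref{mmi}) may be taken uniform in $k$ for $k$ large, since $\bar u_k\to\bar u$ in $\L^1$. Taking a further weak-$*$ limit of the associated $\Theta_k$, passing to the limit in (\ref{weaksol}) for test functions supported in $\,]\tau,T[\,\times\R$, and then letting $\tau\to0$, we conclude that $u=S\bar u$ is a weak solution of (\ref{1})--(\ref{2}), (\ref{idata}) in the sense of Definition~\ref{def:11}; the sign condition (\ref{18}) survives these limits by the argument of the previous paragraph, while continuity in time and the initial condition follow from the corresponding properties of the $S\bar u_k$ together with the $\L^1$-contraction.
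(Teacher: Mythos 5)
Your proposal is correct and follows essentially the same approach as the paper: both pass to the limit $\nu\to\infty$ in the weak formulation (\ref{wksol}) for $u_\nu$ with the interpolated coefficient $\theta_\nu$, using the $\L^1_{loc}$ convergence $u_\nu\to u$ together with the uniform bound (\ref{mmi}) on the number of extremum intervals to control the limiting $\Theta$. The only organizational difference is that you verify the sign condition (\ref{18}) via weak-$*$ compactness of $\theta_\nu$ and a Lebesgue-point argument, while the paper makes the same structural fact rigorous by proving convergence of the plateau endpoints $a_k^\nu(t)\to a_k(t)$, $b_k^\nu(t)\to b_k(t)$ (via Arzel\`a--Ascoli on the Lipschitz curves), after which $\theta_\nu$ is identically $1$ or $0$ on the remaining monotonicity intervals.
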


\begin{proof}
Let an initial data $\bar u$ be given and $u_\nu$ be a front tracking approximation,
constructed as in Section~\ref{sec:2}.
Given any interval $[a,b]$ and any $t_0>0$, by (\ref{mmi}) 
there exists an integer $\ov N$ independent of $\nu$ such that the following holds.
For every $t\geq t_0$ there exists intervals
$\bigl[ a^\nu_k(t), b^\nu_k(t)\bigr]$, $1\leq k\leq N(t)\leq \ov N$ such that:
\begi
\item[(i)] The function $u_\nu(t,\cdot)$ is constant on each of these intervals.
\item[(ii)] On each connected component of the complement $\R\setminus \bigcup_k \bigl[ a^\nu_k(t), b^\nu_k(t)\bigr]$
the function $u_\nu(t,\cdot)$ is monotone (increasing or decreasing).
\endi
 In other words, $\bigl[ a^\nu_k(t), b^\nu_k(t)\bigr]$ are intervals where $u_\nu$ attains its local maximum/minimum.

From the construction of front tracking solutions $u_\nu(t,\cdot)$, it is clear that for $t\geq t_0>0$ 
the endpoints $a_k,b_k$ are 
Lipschitz function of time, and the number $N(t)$ of intervals is non-increasing. Letting $\nu\to\infty$ and 
taking the limit of a suitable subsequence, we conclude that the same holds for the limit solution $u(t,\cdot)$.
In particular, $a^\nu_k(t)\to a_k(t)$, $b_k^\nu(t)\to b_k(t)$ on the time interval where all these functions are defined.
%

Starting with the identities
\bel{wksol}
 \dint \left\{u_\nu\,\vp_t + \Big[\theta_\nu f(u_\nu) + \bigl(1-\theta_\nu \bigr)g(u_\nu)\Big] \vp_x\right\}\, dx\,dt~=~0\eeq
 where $\theta_\nu$ are the piecewise affine functions in (\ref{th-n-1})-(\ref{th-n-2}) and letting
 $\nu\to\infty$,  we thus achieve (\ref{18})-(\ref{weaksol}).
\end{proof}

\begin{figure}[htbp]
\begin{center}
\resizebox{0.9\textwidth}{!}{
\begin{picture}(0,0)%
\includegraphics{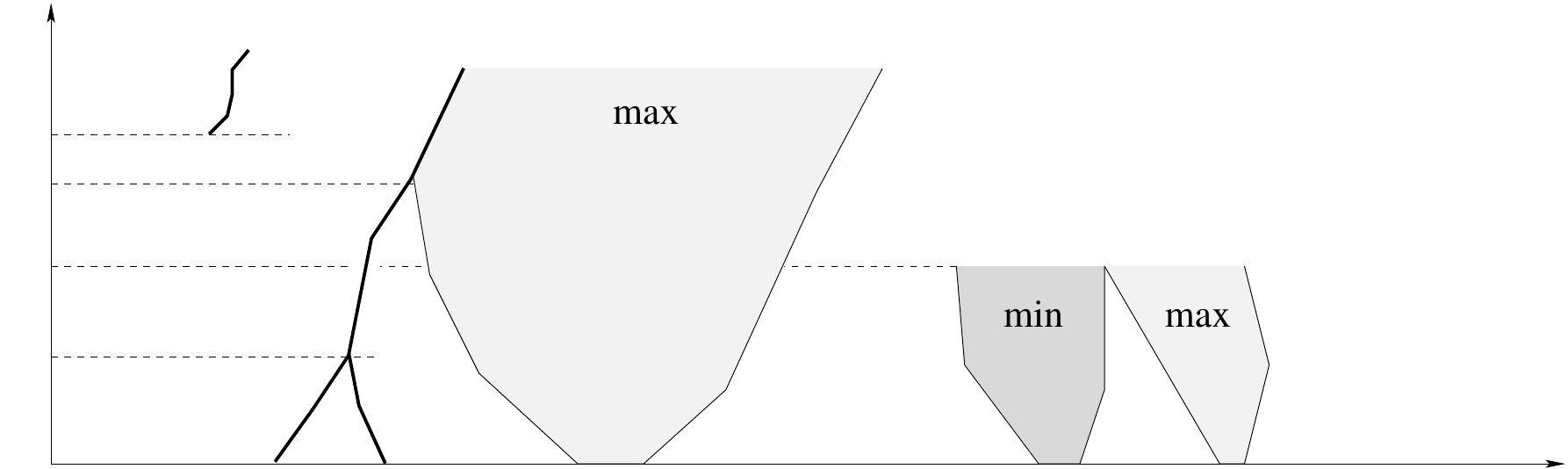}%
\end{picture}%
\setlength{\unitlength}{3947sp}%
\begin{picture}(14277,4254)(-1064,-2203)
\put(-1049,-1336){\makebox(0,0)[lb]{\smash{\fontsize{22}{26.4}\usefont{T1}{ptm}{m}{n}{\color[rgb]{0,0,0}$t_1$}%
}}}
\put(-1049,239){\makebox(0,0)[lb]{\smash{\fontsize{22}{26.4}\usefont{T1}{ptm}{m}{n}{\color[rgb]{0,0,0}$t_3$}%
}}}
\put(-1049,764){\makebox(0,0)[lb]{\smash{\fontsize{22}{26.4}\usefont{T1}{ptm}{m}{n}{\color[rgb]{0,0,0}$t_4$}%
}}}
\put(-1049,-511){\makebox(0,0)[lb]{\smash{\fontsize{22}{26.4}\usefont{T1}{ptm}{m}{n}{\color[rgb]{0,0,0}$t_2$}%
}}}
\put(12601,-2086){\makebox(0,0)[lb]{\smash{\fontsize{22}{26.4}\usefont{T1}{ptm}{m}{n}{\color[rgb]{0,0,0}$x$}%
}}}
\put(826,-1711){\makebox(0,0)[lb]{\smash{\fontsize{22}{26.4}\usefont{T1}{ptm}{m}{n}{\color[rgb]{0,0,0}$f$}%
}}}
\put(6751,-1411){\makebox(0,0)[lb]{\smash{\fontsize{22}{26.4}\usefont{T1}{ptm}{m}{n}{\color[rgb]{0,0,0}$g$}%
}}}
\put(9151,-1861){\makebox(0,0)[lb]{\smash{\fontsize{22}{26.4}\usefont{T1}{ptm}{m}{n}{\color[rgb]{0,0,0}$f$}%
}}}
\put(11176,-811){\makebox(0,0)[lb]{\smash{\fontsize{22}{26.4}\usefont{T1}{ptm}{m}{n}{\color[rgb]{0,0,0}$g$}%
}}}
\put(9451,1139){\makebox(0,0)[lb]{\smash{\fontsize{22}{26.4}\usefont{T1}{ptm}{m}{n}{\color[rgb]{0,0,0}$g$}%
}}}
\put(2626,-1636){\makebox(0,0)[lb]{\smash{\fontsize{22}{26.4}\usefont{T1}{ptm}{m}{n}{\color[rgb]{0,0,0}$f$}%
}}}
\put(2101,1064){\makebox(0,0)[lb]{\smash{\fontsize{22}{26.4}\usefont{T1}{ptm}{m}{n}{\color[rgb]{0,0,0}$f$}%
}}}
\put(301,1064){\makebox(0,0)[lb]{\smash{\fontsize{22}{26.4}\usefont{T1}{ptm}{m}{n}{\color[rgb]{0,0,0}$f$}%
}}}
\put(-524,1664){\makebox(0,0)[lb]{\smash{\fontsize{22}{26.4}\usefont{T1}{ptm}{m}{n}{\color[rgb]{0,0,0}$t$}%
}}}
\end{picture}%
}
\caption{\small A front tracking approximation $u_\nu(t,\cdot)= S^\nu_t \bar u$. 
Here we marked the regions where the flux is constantly equal to $f$ of to $g$.
For each $t\geq 0$ there are finitely many intervals where $u_\nu(t,\cdot)$ 
attains a local maximum or minimum (shaded areas). }
\label{f:df43}
\end{center}
\end{figure}

{\color{black}
In the spatially periodic case, the identity (\ref{Sp2}) is replaced by

\begin{proposition}\label{p:34}
Let $f,g$ satisfy the assumptions {\bf (A1)}, and consider  a piecewise constant, spatially periodic initial data
$\bar u\in \L^1_{per}(\R)$. For a given $\nu\geq 1$, let  $u_\nu(t,\cdot) = S_t^\nu\bar u$ be 
the corresponding front tracking solution. 
 Then, for all $t\geq \|\bar u\|_{\L^\infty}/c_0$ one has
\bel{uave} u_\nu(t,x)~=~\bar u^{average} ~\doteq~\int_0^1\bar u(x)\, dx.\eeq
\end{proposition}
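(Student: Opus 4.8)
The plan is to fix $\nu\ge1$ and prove that the front tracking solution $u_\nu(t,\cdot)=S^\nu_t\bar u$ is identically constant for every $t\ge\|\bar u\|_{\L^\infty}/c_0$; once it is constant, conservation of mass over one period forces the value to be $\bar u^{average}$, and since a constant is a stationary front tracking solution, it then stays equal to $\bar u^{average}$.

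I would first record three facts, all consequences of the earlier analysis. (a) Since $\bar u$ is periodic and piecewise constant, so is every profile $u_\nu(t,\cdot)$, with finitely many jumps and finitely many intervals of local max or min per period; and since $u_\nu$ solves $u_t+\bigl(F_\nu(t,x,u)\bigr)_x=0$ with $F_\nu$ periodic in $x$, the mass $\int_0^1 u_\nu(t,x)\,dx$ is conserved, hence equal to $\bar u^{average}$ for all $t$. (b) A constant function is a trivial front tracking solution, so, setting $M(t)\doteq\max_x u_\nu(t,x)$ and $m(t)\doteq\min_x u_\nu(t,x)$, the comparison principle of Proposition~\ref{p:21}(iii) (in its periodic version, Remark~\ref{r:22}) applied with the constant data $M(s)$, resp.\ $m(s)$, shows that $M$ is non-increasing, $m$ is non-decreasing, and $m(t)\le\bar u^{average}\le M(t)$. (c) Hence, as soon as $M(t_\ast)=m(t_\ast)$ at some time $t_\ast$, the profile $u_\nu(t_\ast,\cdot)$ is constant, equal to $\bar u^{average}$ by (a), and remains so for all $t\ge t_\ast$.

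Next I would estimate the decay rate of the oscillation $M(t)-m(t)$. Viewing $u_\nu(t,\cdot)$ as a function on the circle $\R/\Z$, if it is non-constant then it attains its global maximum $M(t)$ only on one or more intervals where it has a strict local maximum, each of length at most $1$; by the ODE (\ref{24}) together with (\ref{spe1}) and the bound $g_\nu-f_\nu\ge c_0$, the value there decreases at rate $\ge c_0$, so the upper right derivative of $M$ satisfies $\dot M(t)\le-c_0$; symmetrically, using (\ref{26}), $\dot m(t)\ge c_0$. These bounds hold away from the restarting times, of which there are only finitely many in any bounded time interval (the periodic analogue of Lemma~\ref{l:22}), while $M$ and $m$ are continuous across restarting times since $u_\nu(\tau,\cdot)$ is unchanged by a restart. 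Therefore, as long as $u_\nu(t,\cdot)$ remains non-constant,
\[
0~<~M(t)-m(t)~\le~\bigl(M(0)-m(0)\bigr)-2c_0t~\le~2\|\bar u\|_{\L^\infty}-2c_0t,
\]
the last step using $|M(0)|,|m(0)|\le\|\bar u\|_{\L^\infty}$. This forces $u_\nu(t,\cdot)$ to be constant for some $t_\ast\le\|\bar u\|_{\L^\infty}/c_0$, and then (c) yields (\ref{uave}).

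The main obstacle is the bookkeeping in the rate estimate: one must be sure that the global maximum is always attained on a genuine local-max interval — this is where periodicity is used, since a constancy interval of $u_\nu(t,\cdot)$ has length $<1$ unless the whole profile is constant — that such an interval never exceeds the period, and that $M$ and $m$ are regular enough for the differential inequalities to be integrated, which follows from the finiteness of restarting times and the $\L^1$-continuity of $t\mapsto u_\nu(t,\cdot)$. Everything else is immediate from the monotonicity, comparison and conservation properties already established.
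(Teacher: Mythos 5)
Your proof is correct and follows essentially the same route as the paper: the source terms \eqref{24}, \eqref{26} together with $g_\nu-f_\nu\ge c_0$ and the fact that in the periodic setting an interval of constancy has length at most $1$ force every local maximum value to decrease, and every local minimum value to increase, at rate at least $c_0$, so the profile must become constant by time $\|\bar u\|_{\L^\infty}/c_0$, and conservation of mass over one period identifies the constant as $\bar u^{average}$. The additional scaffolding you include (comparison with constant states to get monotonicity of $M,m$, and the explicit handling of restarting times) is sound but inessential; the paper reaches the same conclusion by directly noting that otherwise a local maximum value would fall below a local minimum value, a contradiction.
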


\begin{proof}  By assumption, $u_\nu(t,\cdot)$ has period 1.   By \eqref{24} and \eqref{26}  it follows
\begin{align*}
\frac{d}{dt} \Hat u_{k}(t)~\leq~- c_0, \qquad & \mbox{when $\Hat u_{k}(t)$ is a local maximum,}  
\\
\frac{d}{dt} \Hat u_{j}(t)~\geq~ c_0, \qquad & \mbox{when $\Hat u_{j}(t)$ is a local minimum.}  
\end{align*}
This implies that,
for $t> \|\bar u\|_{\L^\infty}/c_0$, 
we have $\Hat u_{k}(t) < \Hat u_{j}(t)$  for any such couple  of indices $k,j$. This yields
a contradiction. We conclude that  $u_\nu(t,\cdot)$ is constant.
Since the equation is in conservation form, this constant value coincides with the average value of the initial data, as in (\ref{uave}).
\end{proof}
}
\v
In the remaining sections we will show that the semigroup trajectories, obtained 
as limits of front tracking 
approximations, are the unique limits of vanishing viscosity solutions (\ref{3}). 
On the other hand, 
the following example shows that the Definition~\ref{def:11}  is not enough to single out a unique weak solution,
even if additional entropy conditions are imposed.

\begin{example}\label{ex:31} {\rm
Consider the two fluxes
$$f(u)~=~\frac{u^2}{ 2}\,,\qquad\qquad g(u)~=~\frac{u^2}{ 2}+1\,.$$
Let $u=u(t,x)$ be the unique entropy weak solution to Burgers' equation
\bel{BuCP}u_t+f(u)_x~=~0,\qquad\qquad u(0,x)=\left\{ \bega{cl} e^x\quad &\hbox{if}\quad x<0,\\[1mm]
-e^{-x}\quad &\hbox{if}\quad x>0.\enda\right.
\eeq
This solution can be easily constructed by characteristics.  We observe that,  for every $t\geq 0$, the function 
$u(t,\cdot)\in \L^1(\R)$ is piecewise smooth, with a single downward jump at the origin.
Moreover, 
$$u(t,-x)~= ~- u(t,x)\qquad\forall t\geq 0, ~x>0,$$
$$u_x(t,x)~>~0\qquad\forall t\geq 0,~ x\not= 0.$$
In this case, setting 
$$\theta(t,x)\,\doteq\,\sign \,u_x(t,x)\, = \, 1\qquad\hbox{for a.e.}~t,x,$$
since $u$ is an entropy weak solution to Burgers' equation, it satisfies 
$$
\dint \Big\{u \phi_t + \bigl[ \theta f(u) + (1-\theta) g(u)\bigr] \phi_x\Big\}\, dxdt~=~0
$$
for every test function $\phi\in \C^1_c$, and 
$$
\dint \Big\{|u-k| \phi_t + \sign(u-k) \bigl[ \theta \bigl(f(u) - f(k)\bigr)+ (1-\theta) \bigl(g(u)-
g(k)\bigr)\bigr] \phi_x\Big\}\, dxdt~\geq~0
$$
for every $k\in \R$, and every non-negative test function  $\phi\in \C^1_c$.

However,  the solution $u$ of (\ref{BuCP}) is NOT a trajectory of our semigroup, obtained  as limit of 
the approximations (\ref{3}).  This can be checked by observing that, for every $\tau>0$, 
the function  $u(\tau,\cdot)$ is
strictly increasing on the two open domains where $-\infty<x<0$ and $0<x<\infty$, respectively.
On the other hand, by (\ref{eqM2}) 
the function $S_\tau \bar u$ is constantly equal to its maximum value on an interval of length $\geq 2C\tau$.   
These two properties are clearly incompatible.
}
\end{example}

\section{Viscous approximations}
\label{sec:4}
\setcounter{equation}{0}

Assuming  $f,g$ satisfy {\bf (A1)}, in this section we focus on the spatially periodic case.
Having constructed the semigroup $S:\L^1_{per}\times \R_+\mapsto \L^1_{per}$ according to Theorem~\ref{t:21} 
and Remark~\ref{r:22}, we  wish to characterize its trajectories as the unique limits 
of viscous approximations.
{ Toward this goal, for any $\ve, \delta>0$ we consider the  uniformly parabolic equation (\ref{3}).}
%
For any initial data $\bar u\in \L^1_{per}$, the Cauchy problem (\ref{3})-(\ref{idata})
has a unique solution 
$$t~\mapsto~ u^{\ve,\delta}(t,\cdot)~\doteq ~S^{\ve,\delta}_t\bar u.$$   
Note that we also have the comparison principle
\bel{comp}
\bar u(x)\leq \bar v(x)\quad\hbox{for all}~~ x\in\R\quad\implies\quad u^{\ve,\delta}(t,x)\leq v^{\ve,\delta}(t,x)\quad\hbox{for all}~~ x\in\R,~t\geq 0.\eeq
Writing (\ref{3}) in the { conservation form}
\bel{30} u_t +  \Big[\theta_\ve(u_x) f(u) + \bigl(1-\theta_\ve(u_x) \bigr)g(u)\Big]_x
~=~\delta u_{xx}\,,
\eeq
we thus obtain the contraction property
\bel{cont}  \bigl\| S^{\ve,\delta}_t\bar u -  S^{\ve,\delta}_t\bar v\bigr\|_{\L^1_{per}}~\leq~\|\bar u-\bar v\|_{\L^1_{per}}\,,\eeq
for all $\bar u,\bar v$, and all $\ve,\delta>0$, $t\geq 0$.

In turn, for any initial data $\bar u\in BV$, this implies that the total variation over one period does not increase in time:
\begin{align*}
\TV\{S^{\ve,\delta}_t \bar u\}~& \dot=~\lim_{h\to 0+} \frac{1}{ h}\int_0^1 \bigl| u^{\ve,\delta}(t, x+h) -
 u^{\ve,\delta}(t,x)
\bigr|\, dx\\
& \leq~\lim_{h\to 0} \frac{1}{ h} \int_0^1 \bigl| \bar u(x+h) -
\bar u(x)
\bigr|\, dx~=~\TV\{\bar u\}.
\end{align*}

The next two sections will be devoted to a proof of next Theorem.

\begin{theorem}
\label{t:41}
 Let $f,g$ satisfy {\bf (A1)}.
For every $\bar u\in \L^1_{per}(\R)$ and $t\geq 0$, 
the  limit 
\bel{HatS}\lim_{\ve,\delta\to 0+} S^{\ve,\delta}_t\bar u~=~S_t\bar u\eeq
exists in $\L^1_{per}(\R)$.   The map $(\bar u, t)\mapsto  S_t \bar u$ is precisely
the semigroup constructed in (\ref{St}) as limit of front tracking approximations.
\end{theorem}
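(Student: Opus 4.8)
The plan is to follow the error‑estimate strategy of \cite{BY}, comparing the two semigroups on piecewise constant data and then extending by density. Since both $S^{\ve,\delta}$ (by \eqref{cont}) and the front tracking semigroup $S$ (by \eqref{contr22}) are contractive on $\L^1_{per}$, and piecewise constant periodic functions are dense in $\L^1_{per}(\R)$, it suffices to establish \eqref{HatS} when $\bar u$ is piecewise constant: the general case then follows by the $\epsilon/3$ argument used in Step~3 of the proof of Theorem~\ref{t:21}. Moreover, the inequality we will obtain identifies the limit with $S_t\bar u$ automatically, so existence and identification are proved simultaneously.

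So fix a piecewise constant $\bar u\in\L^1_{per}(\R)$, a horizon $T>0$, and $\eta>0$. By Theorem~\ref{t:21} and Remark~\ref{r:22} we may fix $\nu$ so large that the front tracking solution $u_\nu(t,\cdot)=S^\nu_t\bar u$ satisfies $\|u_\nu(t,\cdot)-S_t\bar u\|_{\L^1_{per}}\le\eta$ for all $t\in[0,T]$. The core of the argument is then to construct, for each small $\ve,\delta>0$, a smooth spatially periodic function $w=w^{\ve,\delta}(t,x)$ which is at once $\L^1_{per}$‑close to $u_\nu$ and an approximate solution of the parabolic equation \eqref{3}. As announced in the introduction, $w(t,\cdot)$ is obtained by gluing three types of building blocks via cut‑off functions supported on transition zones of small total length: on each interval $[a^\nu_k(t),b^\nu_k(t)]$ where $u_\nu$ attains a local max or min, a nearly constant plateau modeling the strong‑viscosity spreading produced by $\theta'_\ve\approx\ve^{-1}$ when $u_x\approx 0$; in a neighborhood of each large jump of $u_\nu$, a travelling viscous‑shock profile for the relevant flux ($f$ if the jump is upward, $g$ if downward) with viscosity coefficient $\theta'_\ve(u_x)[g(u)-f(u)]+\delta$; and on the remaining region, where $u_\nu$ has only small jumps, a mollification of $u_\nu$ on a suitable intermediate scale chosen to beat both $\ve$ and $\delta$. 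This is precisely the construction carried out in Section~\ref{sec:6}, and it is arranged so that $\|w(t,\cdot)-u_\nu(t,\cdot)\|_{\L^1_{per}}\le\eta$ on $[0,T]$.

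Next I would estimate the residual obtained by inserting $w$ into \eqref{3}: the content of Proposition~\ref{p:41} is that, for each fixed $\nu$, the $\L^1_{per}$ norm of this source term, integrated in time over $[0,T]$, tends to $0$ as $\ve,\delta\to0$. The contributions split as follows: on the local max/min intervals, the ODE \eqref{24}--\eqref{26} for $\Hat u_\alpha(t)$ was designed exactly to match the balance $[g(\Hat u_\alpha)-f(\Hat u_\alpha)]/(x_{\alpha+1}-x_\alpha)$ produced there by the parabolic term, so only boundary‑layer errors remain; near each large jump, the discrepancy between a true travelling profile and the piecewise constant shock is classical and is $\O(1)\cdot(\ve+\delta)$ per jump; on the small‑jump region the flux differs from a single flux by a term controlled by the small oscillation, and the standard viscous Burgers‑type estimate applies with the chosen mollification scale; and the gluing zones contribute an error proportional to their (small) length. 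Since $S^{\ve,\delta}$ is contractive, the standard error formula \cite[Theorem 2.9]{Bbook} then gives
\begin{equation*}
\bigl\|w(T,\cdot)-S^{\ve,\delta}_T\bar u\bigr\|_{\L^1_{per}}~\le~\int_0^T\Bigl\{\liminf_{h\to0+}\frac1h\,\bigl\|w(t+h,\cdot)-S^{\ve,\delta}_h w(t,\cdot)\bigr\|_{\L^1_{per}}\Bigr\}\,dt~\longrightarrow~0
\end{equation*}
as $\ve,\delta\to0$. Combining this with $\|w(T,\cdot)-u_\nu(T,\cdot)\|_{\L^1_{per}}\le\eta$ and $\|u_\nu(T,\cdot)-S_T\bar u\|_{\L^1_{per}}\le\eta$ yields $\limsup_{\ve,\delta\to0}\|S^{\ve,\delta}_T\bar u-S_T\bar u\|_{\L^1_{per}}\le 2\eta$; since $\eta>0$ was arbitrary, \eqref{HatS} follows for piecewise constant $\bar u$, hence for all $\bar u\in\L^1_{per}(\R)$ by density, with the limit equal to $S_t\bar u$.

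The step I expect to be the main obstacle is the construction of $w$ together with the residual estimate on the local max/min intervals (Sections~\ref{sec:6}--\ref{sec:7}): there $\theta'_\ve$ is of size $\ve^{-1}$ while $u_x$ is of size $\ve$, so the two large/small factors must cancel to leading order, and one must control the boundary layers where the nearly‑constant plateau meets the adjacent monotone parts of the profile. A second delicate point is the quantitative bookkeeping that fixes the mollification scale on the small‑jump region in terms of $\ve$, $\delta$ and $\nu$ so that all residual contributions are $o(1)$ simultaneously as $\ve,\delta\to0$.
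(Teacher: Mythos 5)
Your proposal follows essentially the same route as the paper: reduce to piecewise constant data by contractivity and density, fix $\nu$ so that $S^\nu_t\bar u$ is close to $S_t\bar u$, build the glued approximation $w$ of Section~\ref{sec:6}, bound its residual in \eqref{3} as in Proposition~\ref{p:41}, and conclude via the standard error formula for the contractive semigroup $S^{\ve,\delta}$. The only point you gloss over is that $w$ cannot be taken continuous in time on all of $[0,T]$: its structure changes at the finitely many restarting times of $u_\nu$ (front interactions, or a jump dropping below size $\ve_0$), so the error formula must also include the total $\L^1_{per}$ size of the jumps of $t\mapsto w(t,\cdot)$ at those times, which is exactly the extra condition (iii), estimate \eqref{wu2}, in Proposition~\ref{p:41} and the second $\ve_0$ in \eqref{errw}.
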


Toward a proof, the heart of the matter is to show that any front tracking solution $u_\nu$ can be
closely approximated by
solutions to  (\ref{3}), for $\ve,\delta>0$ small enough.

\begin{proposition}\label{p:41} Assume that $f,g$ satisfy {\bf (A1)}. Given
any piecewise constant initial data $\bar u\in \L^1_{per}(\R)$, a time interval $[0,T]$ and  $\ve_0>0$,
there exist $\nu\geq 1$ and $\bar \ve, \bar\delta>0$ such that the following holds.
Calling $u_\nu(t,\cdot) = S^\nu_t\bar u$ the corresponding front tracking approximations, and 
$u(t,\cdot) = S_t\bar u$ the exact solution, one has
\bel{SSnu} \bigl\| S^\nu_t \bar u - S_t\bar u\bigr\|_{\L^1_{per}} ~\leq~\ve_0
\qquad\quad \forall~ t\in [0,T]\,.\eeq
Moreover, 
for every $\ve\in \,]0,\bar \ve]$ and $\delta\in \,]0,\bar \delta]$ one can construct an approximate solution $w:[0,T]\mapsto \L^1_{per}(\R)$ of (\ref{3})-(\ref{idata}) with the following properties.
\begi
\item[(i)]
For every $t\in [0,T]$ there holds
\bel{wu1}
\int_0^1 \bigl| w(t,x) - u_\nu(t,x)\bigr|\, dx~\leq~\ve_0.\eeq
\item[(ii)] There exist finitely many points $0=t_0<t_1<\cdots<t_N=T$ such that the restriction of 
$w$ to each open subinterval $\,]t_{j-1}, t_j[\,\times\R$ has $\C^{1,1}$ regularity (i.e., it is continuously differentiable with 
Lipschitz derivatives). For $t\notin \{t_j\,;~j=0,\ldots,N\}$ one has
\bel{wer}\int_0^T \int_0^1 \bigg| 
w_t +  \Big[\theta_\ve(w_x) f(w) + \bigl(1-\theta_\ve(w_x) \bigr)g(w)\Big]_x
-\delta w_{xx}\bigg|\, dx\, dt ~\leq~\ve_0\,.\eeq
\item[(iii)] At the times $t_j$ where the map $t\mapsto w(t,\cdot)$ is discontinuous, the jumps satisfy
\bel{wu2}
\sum_{j=0}^N ~ \int_0^1 \Big| w(t_j+, x) - w(t_j-, x)\Big|\, dx ~\leq~\ve_0\,.\eeq
We used here the convention $w(t_0-,x)\doteq \bar u(x)$.
\endi
\end{proposition}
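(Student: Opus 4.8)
The plan is to build the approximate solution $w$ by the "gluing" strategy advertised in the introduction: on each time interval $]t_{j-1},t_j[$ where the front tracking solution $u_\nu$ has a fixed combinatorial structure, I would partition the real line (mod $1$) into three types of regions and define $w$ separately on each, then patch the pieces together with smooth cut-offs. The three region types are: (1) the intervals $[a_k(t),b_k(t)]$ where $u_\nu(t,\cdot)$ attains a local max or min — here, by Lemma~\ref{l:21}, the length stays uniformly positive, and I would solve there the reduced ODE-type profile problem so that $w$ carries the large viscosity $\theta'_\ve(w_x)\approx\ve^{-1}$ correctly near the flat part; (2) small neighborhoods of the finitely many points where $u_\nu$ has a jump of size $\geq \rho$ for a suitably chosen threshold $\rho=\rho(\ve_0)$, where I would insert a viscous travelling-wave/shock layer of width $\O(\delta)$ (or $\O(\ve)$) matching the two constant states, using the standard construction for the appropriate flux $f$ or $g$ according to the sign of the jump; (3) the complementary region, where $u_\nu$ has only small jumps and small total variation, on which one takes $w$ to be a mollification of $u_\nu$ on scale comparable to $\ve$ so that $|w_x|$ is uniformly small and $\theta_\ve(w_x)$ is essentially constant ($=\theta_\ve$ of a small number), reducing the PDE locally to a scalar viscous conservation law with a single smooth flux.

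Concretely, the order of steps I would carry out is: first fix $\nu$ large enough (and thus the piecewise affine fluxes $f_\nu,g_\nu$ close to $f,g$) so that (\ref{SSnu}) holds, using Proposition~\ref{p:22} and the decay estimates of Section~\ref{sec:3} to control the number $N$ of extremal intervals uniformly in time; this also fixes the finitely many structural-change times $t_1,\dots,t_{N}$ of $u_\nu$, at which $w$ will be allowed to jump. Second, on each interval $]t_{j-1},t_j[$, construct $w$ region by region as above, choosing the layer widths and mollification scales as explicit functions of $\ve,\delta$ and the lower bound $\ell(t)\geq\ell_*>0$ from Lemma~\ref{l:21}; the overlap/transition zones between adjacent regions are handled with fixed smooth partitions of unity, contributing an error that is $\O$ of (transition-zone width) $\times$ (flux Lipschitz constant), hence small. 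Third, verify (i): $\|w-u_\nu\|_{\L^1}$ is controlled because on type-(1) and type-(3) regions $w$ differs from $u_\nu$ only by a mollification/boundary-layer discrepancy of $\L^1$-size $\O(\ve+\delta)$, and on type-(2) regions the shock layers have total width $\O(N\delta)$ with bounded amplitude. Fourth, verify (iii): the jumps of $t\mapsto w(t,\cdot)$ occur only at the $t_j$, where the structure of $u_\nu$ changes (fronts merging, a front being removed, an extremal interval appearing/disappearing); since $u_\nu$ itself is $\L^1$-continuous in $t$ and only the \emph{profile} prescription for $w$ changes, the jump in $w$ at each $t_j$ is again $\O(\ve+\delta)$ in $\L^1$, and there are only $N$ of them. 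Finally, for (ii), plug $w$ into the operator in (\ref{wer}) on each region: on type-(3) regions the residual is the mollification error for a viscous conservation law, $\O(\ve)\cdot(\text{local TV})$; on type-(2) regions the travelling wave is an \emph{exact} solution of the single-flux viscous equation up to the discrepancy between $f_\nu$ and $f$ and the mismatch between the $\delta$-viscosity and the $\theta'_\ve$-viscosity (which is negligible since $w_x$ is large only inside the thin layer, where $\theta_\ve(w_x)\in\{0,1\}$ is locked); on type-(1) regions the residual measures how well the near-constant profile with the prescribed large-viscosity transition solves (\ref{3}), and here the ODE (\ref{24})/(\ref{26}) governing $\Hat u_\alpha(t)$ is precisely the solvability condition, so the residual is again higher order; the transition zones contribute $\O$(zone width). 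Summing over the $\O(N)$ regions and integrating over $[0,T]$ gives a total bounded by $C(T,\nu)\cdot(\ve+\delta)$, which is $\leq\ve_0$ once $\bar\ve,\bar\delta$ are small enough.

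The main obstacle, I expect, is the construction and matching on the type-(1) regions: near a local maximum the correct profile must interpolate between the nearly-constant value $\Hat u_\alpha(t)$ on the flat part and the neighboring monotone pieces in such a way that $w_x$ passes through $0$ (so that $\theta_\ve(w_x)$ genuinely transitions between $0$ and $1$ and the enhanced viscosity $\theta'_\ve(w_x)(g(w)-f(w))\approx \ve^{-1}c_0$ is activated exactly where needed), while keeping $w$ within $\O(\ve_0)$ of $u_\nu$ in $\L^1$ and keeping the PDE residual small. This requires a careful boundary-layer analysis: one essentially needs a stationary (or slowly varying) solution of $\theta'_\ve(w_x)(g(w)-f(w))w_{xx}+\delta w_{xx}\approx -\dot{\Hat u}_\alpha$ on an $\O(\ve)$-scale, whose existence and estimates hinge on the strict positivity $g-f\geq c_0$ and on the lower bound $\ell(t)\geq\ell_*$ keeping $\dot{\Hat u}_\alpha$ bounded. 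The second delicate point is the bookkeeping at the structural-change times $t_j$ and at front interactions inside an interval $]t_{j-1},t_j[$ — one must make sure that the total number of pieces, and hence the accumulated transition-zone and jump errors, stays bounded by a constant depending only on $\bar u$ and $T$ (this is exactly what Lemma~\ref{l:22} and estimate (\ref{mmi}) provide), so that the small per-piece errors do not add up to something uncontrolled. Everything else — the type-(2) viscous shock layers and the type-(3) mollification — is by now standard (as in \cite{BY,Bbook}) and I would only sketch it.
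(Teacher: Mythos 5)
Your overall architecture (three region types, jumps of $w$ only at structural-change times, residual estimates region by region) matches the paper's, but two of your concrete prescriptions would break the residual estimate \eqref{wer}, and these are exactly the points where the paper's construction is nonstandard. First, on the type-(3) regions you propose to mollify $u_\nu$ so that ``$|w_x|$ is uniformly small and $\theta_\ve(w_x)$ is essentially constant $(=\theta_\ve$ of a small number$)$''. This is the opposite of what is needed: if $|w_x|\lesssim\ve$ then $\theta_\ve(w_x)\approx\tfrac12$ and $\theta'_\ve(w_x)\approx\ve^{-1}$, so the expanded flux term contains $\theta'_\ve(w_x)\,w_{xx}\,[f(w)-g(w)]$, which is of size $\ve^{-1}c_0|w_{xx}|$, and moreover the effective transport speed becomes $\approx\tfrac12(f'+g')$ instead of $f'_\nu$ (resp.\ $g'_\nu$), producing an $\O(1)\cdot\TV$ error rather than $\O(1)\cdot\ve_0\cdot\TV$. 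The same problem appears in your partition-of-unity transition zones: wherever $w_x$ sweeps through the interval $(0,\ve)$, the factor $\theta'_\ve(w_x)\sim\ve^{-1}$ makes the error there \emph{not} of order (zone width)$\times$(Lipschitz constant). The paper avoids both issues by forcing $|w_x|\geq\ve$ on all monotone regions via the additive correction \eqref{TWdef} (and the rescaling \eqref{derbig}), so that $\theta_\ve(w_x)\in\{0,1\}$ and $\theta'_\ve(w_x)=0$ identically there, and by gluing with \emph{exact} $\C^1$ matching: the max/min profiles $V^{h,y}$ of \eqref{Vhy} have slope exactly $\pm\ve$ at their endpoints, the pieces are shifted as in \eqref{shiftW} rather than blended, and the resulting periodicity defect is repaired through the modified function $v_\nu$ in \eqref{vnu}. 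Your ``fixed smooth partitions of unity'' have no counterpart of this mechanism, and without it the estimate \eqref{wer} fails.

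Second, for the type-(2) shock layers you invoke ``the standard construction for the appropriate flux $f$ or $g$''. Since $f,g$ are not assumed convex, a jump that is Liu-admissible for the polygonal flux $f_\nu$ need not be Liu-admissible for $f$, so a viscous traveling profile for $f$ itself may simply not exist; the paper has to perturb the fluxes as in \eqref{TFG}, prove Lemma~\ref{l:50} and Corollary~\ref{c:41} to recover strict Liu admissibility for $\Tilde f,\Tilde g$, and then truncate the profile as in \eqref{mollU}. Related, and also missing from your plan, is the fact that the end states $u_k^\pm(t)$ of a large shock adjacent to a max/min interval vary in time, so one needs the Lipschitz dependence of the profile on its end states (Lemmas~\ref{l:53}--\ref{l:54}, estimate \eqref{HUC}) to control the extra $w_t$ contribution. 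Your treatment of the type-(1) intervals correctly identifies the solvability condition \eqref{24}/\eqref{26} but stops at ``this requires a careful boundary-layer analysis''; the paper resolves it explicitly with the auxiliary function $Z$ in \eqref{Zeq} and the profiles $V^{h,y}$, which is what makes the exact slope matching $\pm\ve$ (and hence the gluing) possible. As it stands, the proposal is a plausible outline of the same strategy, but the steps it treats as routine are precisely where the argument would fail without the paper's additional devices.
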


Sections~\ref{sec:5} and \ref{sec:6} will be devoted to a proof of Proposition~\ref{p:41}.
We conclude this section by showing how Theorems~\ref{t:21} and \ref{t:41} both follow
from the above proposition.
\v

\begin{proof} 
We prove that {Proposition~\ref{p:41} implies Theorem~\ref{t:41}.}

Let a piecewise constant initial data $\bar u\in \L^1_{per}(\R)$ be given, together with $T, \ve_0>0$. 
Choose $\nu\geq 1$ according to Proposition~\ref{p:41}.

We observe that the parabolic equation (\ref{30}) is in conservation form  and
preserves ordering. Namely (see for example  \cite{Lieb}),  for any pair of solutions $u,v$ one has
$$u(0,x)\leq v(0,x)\qquad\implies\qquad u(t,x)\leq v(t,x)\qquad\forall t\geq 0.$$
It follows that (\ref{30})
generates a contractive semigroup $S^{\ve,\delta}$ on $\L^1_{per}(\R)$.
A standard error formula~\cite{Bbook} now yields
\begin{align}
\bigl\|w(\tau,\cdot)& - S^{\ve,\delta}_\tau \bar u\bigr\|_{\L^1_{per}}
\nonumber\\
&\leq~\int_0^\tau\int_0^1 \bigg| 
w_t +  \Big[\theta_\ve(w_x) f(w) + \bigl(1-\theta_\ve(w_x) \bigr)g(w)\Big]_x
-\delta w_{xx}\bigg|\, dx\, dt 
\nonumber \\
&\ds\qquad\ds + \sum_{t_j\in [0,\tau]}   \bigl\| w(t_j+, x) - w(t_j-, x)\bigr\|_{\L^1_{per}}
\nonumber\\
&\leq~\ve_0 + \ve_0\,. \label{errw}
\end{align}
%
%
Combining the estimate (\ref{errw}) with (\ref{wu1}) and (\ref{SSnu}), for every $t\in [0,T]$ we obtain
$$\bega{rl} \|S_t^{\ve,\delta}\bar u - S_t\bar u\|_{\L^1_{per}} 
&\leq~\bigl\|S^{\ve,\delta}_t \bar u-w(t,\cdot)\bigr\|_{\L^1_{per}}+\bigl\| w(\tau,\cdot)
- S^\nu_t \bar u\bigr\|_{\L^1_{per}}+\bigl\| 
S^\nu_t \bar u-S_t \bar u\bigr\|_{\L^1_{per}}\\[2mm]
&\leq~\ve_0 + \ve_0 + \ve_0\,.
\enda
$$
Since $\ve_0>0$ was arbitrary, this implies the convergence (\ref{HatS})
for every piecewise constant initial data $\bar u\in \L^1_{per}$.   Since 
piecewise constant functions are dense  and all semigroups $S^{\ve,\delta}$, $S$
are contractive, we conclude that the limit (\ref{HatS}) holds for all initial data $\bar u\in \L^1_{per}$.
\end{proof}

The statements in Theorem~\ref{t:21} are now an immediate consequence of Theorem~\ref{t:41}
and Proposition~\ref{p:33}.

\section{Special families of viscous solutions}
\label{sec:5}
\setcounter{equation}{0}

The piecewise smooth approximations $w$ satisfying (\ref{wu1})-(\ref{wu2}) 
will be obtained by gluing together different types of smooth solutions,
which we now describe.

\subsection{Viscous shocks}
Given the conservation law
$$u_t+ f(u)_x~=~0,$$
we recall that a shock solution of the form
\bel{RHL}u(t,x) ~=~\left\{\bega{rl} u^-\quad\hbox{if}\quad x<\lambda t,\\[1mm]
u^+\quad\hbox{if}\quad x>\lambda t,\enda\right.
\qquad\qquad  \lambda\,=\, \frac{f(u^+)-f(u^-)}{ u^+-u^-}\,,\eeq
is Liu-admissible if
\bel{Liu}
\frac{f(u^+)-f(u^-)}{ u^+-u^-}~\leq ~\frac{f(u^*)-f(u^-)}{ u^*-u^-}\,.
\eeq
for every intermediate state $u^*$ in the open interval with endpoints $u^-,u^+$.

As shown in \cite{L76}, the Liu admissibility condition (\ref{Liu}) is closely related to 
the existence of 
of  traveling profiles $u(t,x) = U(x-\lambda t)$ for the viscous equation
\bel{viscf}u_t + f(u)_x~=~\delta u_{xx}\eeq
which converge to the shock (\ref{RHL}) as $\delta\to 0$.
These viscous traveling waves are solutions to the ODE
$$\delta U'~=~f(U) -\lambda U + C,$$
where the constant $C$ is determined by
\bel{CC}C~=~\lambda u^+-f(u^+)~=~\lambda u^- - f(u^-).\eeq

When one wishes to approximate a Liu-admissible  jump for a front tracking solution of the conservation 
law (\ref{clawfnu})  with a traveling wave for the viscous equation (\ref{viscf}), a technical difficulty arises.  
Namely, the assumption that a shock with left and right states $u^-$ and $u^+$ is Liu-admissible 
for the polygonal flux $f_\nu$ at (\ref{fnudef}) does not necessarily imply that it is still Liu-admissible for the original flux $f$. 
The issue can be resolved by adding to the flux $f$ a small perturbation. We thus define
\bel{TFG} 
\Tilde f(u)~\doteq~f(u) - \ve_0 u^2,\qquad\qquad \Tilde g(u)~\doteq~g(u)+ \ve_0 u^2,\eeq
to deal with upward and downward jumps, respectively.

\begin{lemma} \label{l:50}  Let $\ve_0>0$ be given, and let $h\in \C^2(\R)$. Then one can choose
$ \bar \nu\geq 1$ depending only on $\ve_0$ and $ \|h\|_{\C^2}$ such that the following holds. 
If $b-a\geq \ve_0$,   $h(a)=h(b)=0$,  $\nu\geq \bar\nu$  and $$h(2^{-\nu} k)\geq 0\qquad
\hbox{for every integer $k$ such that}~~  2^{-\nu}k\in [a,b],$$
then
\bel{Tfu} h(u) + \ve_0(u-a)(b-u)~\geq~\frac{\ve_0(u-a)(b-u)}{ 2}~\geq~0\qquad \forall ~u\in [a,b]\,.\eeq
\end{lemma}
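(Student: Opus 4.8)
The plan is to reduce \eqref{Tfu} to the single pointwise estimate
\[
h(u)~\geq~-\frac{\ve_0}{2}(u-a)(b-u)\qquad\forall u\in[a,b],
\]
since the second inequality in \eqref{Tfu} is trivial for $u\in[a,b]$, while adding $\ve_0(u-a)(b-u)$ to both sides of the displayed bound gives the first. If $h''\equiv 0$ then $h$ is affine and $h(a)=h(b)=0$ force $h\equiv 0$, so the lemma is trivial; hence we may assume $\|h''\|_{\L^\infty}>0$ and take $\bar\nu$ to be the least integer with $2^{-\bar\nu}\le \ve_0^2/\bigl(2\|h\|_{\C^2}\bigr)$, which depends only on $\ve_0$ and $\|h\|_{\C^2}$ as required.

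First I would introduce the finite ordered set $Z=\{a,b\}\cup\bigl(2^{-\nu}\Z\cap[a,b]\bigr)$, written $a=z_0<z_1<\cdots<z_m=b$, and record two facts. One: $h(z_i)\ge 0$ for every $i$ --- at $z_0=a$ and $z_m=b$ because $h(a)=h(b)=0$, and at every interior $z_i$ because it is a point of $2^{-\nu}\Z$ lying in $[a,b]$, so the hypothesis applies. Two: $z_{i+1}-z_i\le 2^{-\nu}$ for every $i$, because any open interval of length $>2^{-\nu}$ contains a point of $2^{-\nu}\Z$ in its interior; such a point, lying in $(a,b)\subset[a,b]$, would belong to $Z$ and lie strictly between $z_i$ and $z_{i+1}$, contradicting their consecutiveness.

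Next, fix $u\in[z_i,z_{i+1}]$ and let $L$ be the affine function interpolating $h$ at $z_i$ and $z_{i+1}$. Since $L(u)$ is a convex combination of the nonnegative values $h(z_i),h(z_{i+1})$, we have $L(u)\ge 0$, and the Lagrange remainder gives $h(u)=L(u)-\tfrac12 h''(\xi)(u-z_i)(z_{i+1}-u)$ for some $\xi\in(z_i,z_{i+1})$; hence $h(u)\ge -\tfrac12\|h''\|_{\L^\infty}(u-z_i)(z_{i+1}-u)$. The weight is then controlled using $z_{i+1}-z_i\le 2^{-\nu}$ together with $a\le z_i$ and $z_{i+1}\le b$:
\[
(u-z_i)(z_{i+1}-u)~\le~2^{-\nu}\min\{u-z_i,\,z_{i+1}-u\}~\le~2^{-\nu}\min\{u-a,\,b-u\},
\]
and since $b-a\ge\ve_0$ one has $\max\{u-a,b-u\}\ge\tfrac{b-a}{2}\ge\tfrac{\ve_0}{2}$, so $\min\{u-a,b-u\}\le\tfrac{2}{\ve_0}(u-a)(b-u)$. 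Combining these,
\[
h(u)~\ge~-\frac{\|h''\|_{\L^\infty}\,2^{-\nu}}{\ve_0}\,(u-a)(b-u)~\ge~-\frac{\ve_0}{2}\,(u-a)(b-u)
\]
for all $\nu\ge\bar\nu$, using the choice of $\bar\nu$ and $\|h''\|_{\L^\infty}\le\|h\|_{\C^2}$; this is the reduced estimate, and \eqref{Tfu} follows. The only point that genuinely needs care --- and the reason a crude uniform bound $|h|\lesssim\|h''\|_{\L^\infty}2^{-2\nu}$ would not suffice --- is that the target degenerates like $(u-a)(b-u)$ near the endpoints, which forces the sharper inequality $(u-z_i)(z_{i+1}-u)\le 2^{-\nu}\min\{u-a,b-u\}$ and the use of $b-a\ge\ve_0$ to convert the $\min$ into the product; everything else is routine.
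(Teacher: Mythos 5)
Your proof is correct and follows essentially the same strategy as the paper: nonnegativity of $h$ at the $2^{-\nu}$-spaced points together with the $\C^2$ bound gives a lower bound on $h$ of size $\O(2^{-\nu})$ times the distance to the nearest endpoint of $[a,b]$, which is then absorbed into $\frac{\ve_0}{2}(u-a)(b-u)$ using $b-a\geq\ve_0$. The only difference is minor: you obtain the local bound from the linear-interpolation remainder on each grid cell and treat both endpoints simultaneously via $\min\{u-a,\,b-u\}$, whereas the paper controls $h'$ by the mean value theorem and argues separately on the two halves $[a,(a+b)/2]$ and $[(a+b)/2,b]$.
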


\begin{proof}
Let $\bar u \in  \{a\} \cup \{2^{-\nu}k\in[a,(a+b)/2[\}$. We claim that 
\bel{hu-lb}h(u)~\geq ~-2^{-\nu}(u-\bar u)C_h ,  \qquad 
C_h~\doteq~\max_{u\in [a,b]}~\bigl|h''(u)\bigr|,
\qquad\forall u\in \bigl[\bar u, \bar u+2^{-\nu}\bigr].\eeq
Indeed, by assumption we have $h(\bar u)\ge 0$. If $h(u)\ge 0$ for all $u\in \bigl[\bar u, \bar u+2^{-\nu}\bigr]$, then (\ref{hu-lb}) trivially holds.
Otherwise, assume $\min \{h(u);\ u\in \bigl[\bar u, \bar u+2^{-\nu}\bigr] \}<0$. 
By assumption, in addition to $\bar u$, there exists another point $\bar u_1$ with  $\bar u < \bar u_1\le \bar u+2^{-\nu}$  
where $h$ is non-negative: $h(\bar u_1)\ge 0$.
Therefore, there must exist $v_0\in [\bar u, \bar u_k+2^{-\nu}]$ where $h'(v_0)\geq 0$.
Using the mean value theorem we obtain
$$h'(u)~\geq ~-2^{-\nu} C_h\,,\qquad\qquad\forall u\in [\bar u, \bar u+2^{-\nu}].$$
Since $h(\bar u)\ge 0$, this implies
(\ref{hu-lb}).

Next, observing that $\bar u\ge a$, from (\ref{hu-lb}) we obtain the lower bound 
\bel{fb2}h(u)~\geq ~-2^{-\nu}(u-a)C_h\qquad\qquad\forall u\in \bigl[a, (a+b)/2\bigr].\eeq
Recalling that $b-a\geq \ve_0$, for all $u\in \bigl[a, (a+b)/2\bigr]$ one has
\bel{fb3}
\frac{\ve_0 (u-a)(b-u)}{ 2}~\geq~\frac{\ve_0^2}{ 4}(u-a)  ~\geq~2^{-\nu}(u-a)C_h\,,\eeq
provided that the integer $\nu\geq \bar \nu$ is large enough.

Together, the inequalities  \eqref{fb2} and  \eqref{fb3} yield  \eqref{Tfu} on $\bigl[a,(a+b)/2\bigr]$.
An entirely similar argument yields the same inequality (\ref{fb3}) on the half interval $\bigl[(a+b)/2, b\bigr]$. 
\end{proof}

\v
\begin{corollary}\label{c:41} Let $\ve_0>0$ be given, together with the flux functions $f,g\in \C^2(\R)$. 
For all $\nu\geq \bar\nu$ large enough, defining $\Tilde f, \Tilde g$ as in 
(\ref{TFG}) the following holds.

Let $u^-<u^+$ be the left and right states of a Liu-admissible shock for the flux $f_\nu$.
Assume $u^+-u^-\geq \ve_0$.   Then the same shock is Liu-admissible for the flux
$\Tilde f$. Indeed, for every $u\in [u^-, u^+]$ one has
\bel{fb5}\Tilde f(u) ~\geq~\Tilde f(u^-) + \frac{\Tilde f(u^+) - \Tilde f (u^-)}{ u^+-u^-} (u-u^-) + 
\frac{ \ve_0 (u-u^-) (u^+-u)}{ 2}\,.
\eeq
Similarly, let $u^->u^+$ be the left and right states of a Liu-admissible shock for the flux $g_\nu$.
Assume $u^--u^+\geq \ve_0$.  Then the same shock is Liu-admissible for the flux
$\Tilde g$. Indeed, for every $u\in [u^+, u^-]$ one has
\bel{fb6}\Tilde g(u) ~\leq~\Tilde g(u^+) + \frac{\Tilde g(u^-) - \Tilde g (u^+)}{ u^--u^+} (u-u^+) - 
\frac{\ve_0 (u-u^+) (u^--u)}{ 2}\,.
\eeq
\end{corollary}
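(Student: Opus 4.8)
The plan is to derive Corollary~\ref{c:41} directly from Lemma~\ref{l:50}, applied to an auxiliary function $h$ built from the flux and the chord connecting the two shock states. I will treat the upward-jump case \eqref{fb5} in detail; the downward-jump case \eqref{fb6} follows by the symmetric substitution $g\mapsto -g$, $\Tilde g\mapsto -\Tilde g$ (or equivalently $u\mapsto -u$), swapping the roles of $u^-$ and $u^+$.

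First I would set $a\doteq u^-$, $b\doteq u^+$, so that $b-a\geq\ve_0$ by hypothesis, and define
\bel{cor-h-def}
h(u)~\doteq~f(u) - f(a) - \frac{f(b)-f(a)}{b-a}(u-a)\qquad\forall u\in[a,b].
\eeq
Then $h\in\C^2(\R)$, $h(a)=h(b)=0$, and $\|h\|_{\C^2}$ is controlled by $\|f\|_{\C^2}$ and $b-a$ (hence by $\|f\|_{\C^2}$ alone once we know the shock states lie in a bounded set; more carefully, $h''=f''$, so $C_h\le\|f''\|_{\infty}$, and the first-order term of $h$ contributes a constant depending only on $\|f'\|_\infty$). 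The key observation is that the assumption that the shock $(u^-,u^+)$ is Liu-admissible for the piecewise-affine flux $f_\nu$ translates into exactly the sign condition needed in Lemma~\ref{l:50}. Indeed, Liu-admissibility for $f_\nu$ says that the graph of $f_\nu$ on $[a,b]$ lies above the chord through $(a,f_\nu(a))$ and $(b,f_\nu(b))$. Since $f_\nu$ interpolates $f$ at the dyadic points $2^{-\nu}k$ and $f_\nu(a)=f(a)$, $f_\nu(b)=f(b)$ whenever $a,b\in 2^{-\nu}\Z$ (which holds because the front tracking states are multiples of $2^{-\nu}$), this gives $f(2^{-\nu}k)\ge$ [value of the chord at $2^{-\nu}k$], i.e. $h(2^{-\nu}k)\ge 0$ for every integer $k$ with $2^{-\nu}k\in[a,b]$.

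Now I would invoke Lemma~\ref{l:50}: choosing $\bar\nu$ as provided there (depending only on $\ve_0$ and $\|h\|_{\C^2}$, hence ultimately only on $\ve_0$, $\|f\|_{\C^2}$, and an a priori $\L^\infty$ bound on the states), for all $\nu\ge\bar\nu$ we get
\bel{cor-h-lb}
h(u)+\ve_0(u-a)(b-u)~\geq~\frac{\ve_0(u-a)(b-u)}{2}\qquad\forall u\in[a,b].
\eeq
Unwinding the definition \eqref{cor-h-def} of $h$, and observing that
\bel{cor-tf-rewrite}
\Tilde f(u) - \Tilde f(a) - \frac{\Tilde f(b)-\Tilde f(a)}{b-a}(u-a)
~=~h(u) - \ve_0\Big[u^2 - a^2 - \frac{b^2-a^2}{b-a}(u-a)\Big]
~=~h(u)+\ve_0(u-a)(b-u),
\eeq
since $u^2-a^2-(b+a)(u-a)=(u-a)(u+a)-(u-a)(b+a)=-(u-a)(b-u)$, inequality \eqref{cor-h-lb} is precisely \eqref{fb5}. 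For the downward case, one applies the same argument to the function $u\mapsto -g(-u)$ on the interval $[-u^-,-u^+]$ (note $-u^-<-u^+$), for which the perturbation $-g(-u)-\ve_0u^2$ plays the role of $\Tilde f$; Liu-admissibility of the $g_\nu$-shock for $(u^-,u^+)$ with $u^->u^+$ is equivalent, after this reflection, to the chord condition needed above, and \eqref{fb6} results after reflecting back.

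The main obstacle — really the only subtle point — is making sure the constant $\bar\nu$ from Lemma~\ref{l:50} can be chosen uniformly over all the shocks that actually arise in the front tracking construction. This requires an a priori bound on the range of the states $u^\pm$, which is supplied by the uniform $\L^\infty$ bound on front tracking approximations (Proposition~\ref{p:21}(ii), and Proposition~\ref{p:40}); with that bound in hand, $C_h=\max_{[a,b]}|f''|$ and the analogous quantity for $g$ are bounded over a fixed compact interval, so $\|h\|_{\C^2}$ is uniformly controlled and a single $\bar\nu$ works for all relevant shocks. The rest is the bookkeeping identity \eqref{cor-tf-rewrite}, which is elementary.
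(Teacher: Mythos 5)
Your proof is correct and takes essentially the same route as the paper: the paper likewise applies Lemma~\ref{l:50} to $h(u)=f(u)-f(u^-)-\lambda(u-u^-)$ on $[u^-,u^+]$, recalls that $f=f_\nu$ on $2^{-\nu}\Z$, and notes that the case \eqref{fb6} is analogous. You simply spell out the algebraic identity $\ve_0\bigl[u^2-a^2-(u-a)(a+b)\bigr]=-\ve_0(u-a)(b-u)$ and the reflection argument for $g$, which the paper leaves implicit.
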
 

\begin{proof}
The inequality (\ref{fb5}) follows by applying Lemma~\ref{l:50} to the function 
$h(u) \,\doteq\, f(u) - f(u^-) - \lambda (u - u^-)$ on the interval $[u^-,u^+]$, with $\lambda$ 
given by Rankine-Hugoniot condition, and by recalling that $f=f_\nu$ at all points of $2^{-\nu}\Z$.
The proof of (\ref{fb6}) is entirely similar.
\end{proof}

Thanks to Corollary \ref{c:41}, we can approximate an upward shock for  $f_\nu$ or  a downward shock
for $g_\nu$ with traveling profiles for the viscous conservation laws
$$u_t+\Tilde f(u)_x~=~\delta\, u_{xx}\,,\qquad\qquad  u_t+\Tilde g(u)_x~=~\delta\, u_{xx}\,.
$$

For future use, we shall need to truncate these profiles, so that they coincide with $u^-$ or $u^+$ outside a 
bounded interval.
This will be achieved using a non-increasing function $\vp:\R_+\mapsto [0,1]$ such that
\bel{interp}\vp(s)~=~\left\{\bega{cl}1\quad&\hbox{if}~~s\in [0,1],\\[1mm]
0\quad&\hbox{if}~~s\geq 2.\enda\right.\eeq

\begin{figure}[htbp]
\begin{center}
\resizebox{0.95\textwidth}{!}{
\begin{picture}(0,0)%
\includegraphics{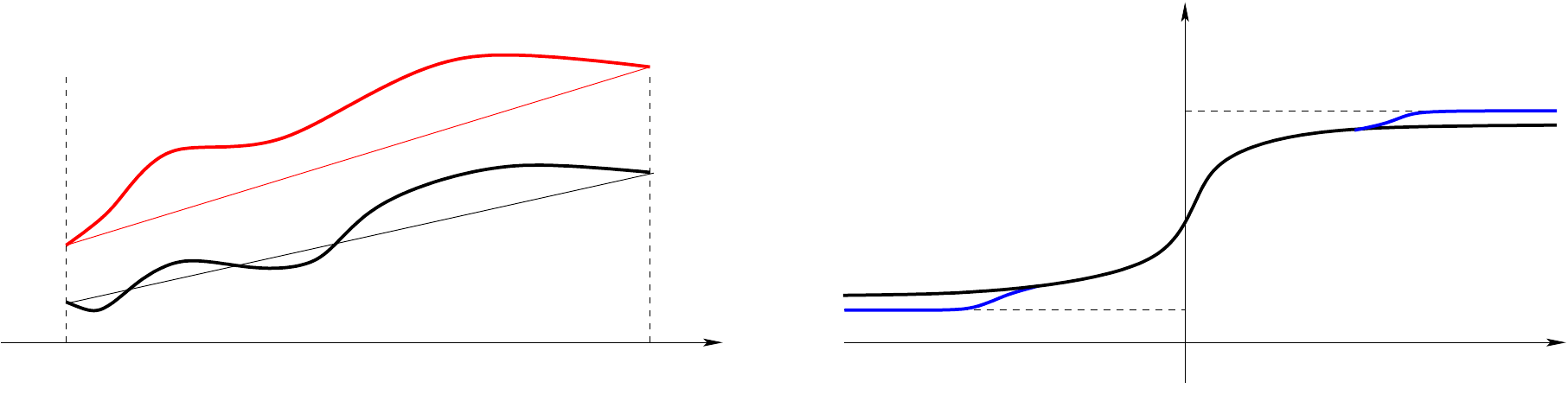}%
\end{picture}%
\setlength{\unitlength}{3947sp}%
\begin{picture}(14499,3642)(589,359)
\put(2176,2939){\makebox(0,0)[lb]{\smash{\fontsize{22}{26.4}\usefont{T1}{ptm}{m}{n}{\color[rgb]{1,0,0}$\tilde f$}%
}}}
\put(14626,494){\makebox(0,0)[lb]{\smash{\fontsize{22}{26.4}\usefont{T1}{ptm}{m}{n}{\color[rgb]{0,0,0}$x$}%
}}}
\put(14026,2489){\makebox(0,0)[lb]{\smash{\fontsize{22}{26.4}\usefont{T1}{ptm}{m}{n}{\color[rgb]{0,0,0}$U$}%
}}}
\put(13951,3239){\makebox(0,0)[lb]{\smash{\fontsize{22}{26.4}\usefont{T1}{ptm}{m}{n}{\color[rgb]{0,0,1}$\tilde U$}%
}}}
\put(10951,2864){\makebox(0,0)[lb]{\smash{\fontsize{22}{26.4}\usefont{T1}{ptm}{m}{n}{\color[rgb]{0,0,0}$u^+$}%
}}}
\put(11626,1064){\makebox(0,0)[lb]{\smash{\fontsize{22}{26.4}\usefont{T1}{ptm}{m}{n}{\color[rgb]{0,0,0}$u^-$}%
}}}
\put(6476,464){\makebox(0,0)[lb]{\smash{\fontsize{22}{26.4}\usefont{T1}{ptm}{m}{n}{\color[rgb]{0,0,0}$u^+$}%
}}}
\put(1076,464){\makebox(0,0)[lb]{\smash{\fontsize{22}{26.4}\usefont{T1}{ptm}{m}{n}{\color[rgb]{0,0,0}$u^-$}%
}}}
\put(3526,1289){\makebox(0,0)[lb]{\smash{\fontsize{22}{26.4}\usefont{T1}{ptm}{m}{n}{\color[rgb]{0,0,0}$f$}%
}}}
\end{picture}%
}
\caption{\small  Left: a flux function $f$ and the modified flux $\Tilde f$, constructed in  Corollary~\ref{c:41}.
Notice that the upward jump with left and right states $u^-<u^+$ is Liu-admissible for $\Tilde f$ but not for $f$.
Right: a viscous traveling profile $U$ constructed at (\ref{Utvd}), and its smooth
truncation $\Tilde  U$ at (\ref{mollU}).
}
\label{f:df42}
\end{center}
\end{figure}

Next,
assume that the shock with left and right states $u^-<u^+$ is Liu admissible for the polygonal flux $f_\nu$
and  define $\Tilde f$ as in (\ref{TFG}). 
Let $u(t,x)=U(x-\lambda t)$ be a viscous traveling profile in connection with the flux $\Tilde f$ 
and with $\delta=1$.  More precisely, let $U$ satisfy
\bel{Utvd} U'~=~\Tilde f(U) -\lambda U + C,\qquad\qquad U(0)\,=\,\frac{u^++u^-}{ 2}\,,\eeq
where the speed $\lambda$ and the constant $C$ are as in (\ref{RHL}) and (\ref{CC}) for the flux $\Tilde f$.
Recalling (\ref{interp}), define the smooth interpolation by setting
 \bel{mollU} \Tilde U(x)~\doteq~\left\{ 
 \bega{cl} \vp(x/\delta^{1/3}) \cdot U(x/\delta) + \bigl( 1-\vp(x/\delta^{1/3})\bigr) u^+
 \qquad &\hbox{if}~~x\geq 0,\\[3mm]
  \vp(-x/\delta^{1/3}) \cdot U(x/\delta) + \bigl( 1-\vp(-x/\delta^{1/3})\bigr) u^-
 \qquad &\hbox{if}~~x\leq 0.\enda\right.\eeq
Notice that the above definition implies
\bel{trU} \Tilde U(x)~\doteq~\left\{\bega{cl} u^-\qquad &\hbox{if}\quad x \leq -2\delta^{1/3},\\[1mm]
U(x/\delta)\qquad &\hbox{if}\quad  |x|<\delta^{1/3},\\[1mm]
 u^+\qquad &\hbox{if}\quad x \geq 2\delta^{1/3}.\enda\right.\eeq
 
 We estimate by how much the profile $\Tilde U$ fails to satisfy the viscous conservation law (\ref{viscf}).
\begin{lemma}\label{l:52}
 Consider 
 the interpolated profile  $\Tilde U$ defined  
 at (\ref{mollU}).  Setting $w(t,x)\doteq~\Tilde U(x-\lambda t)$, 
 for any fixed time $t$ and all $\delta>0$ sufficiently small one has
\bel{erb7}\int_{-\infty}^{+\infty}  \Big|
w_t +  f(w)_x 
-\delta w_{xx}\Big|\, dx\, =\,\O(1)\cdot \ve_0\,  |u^+-u^-|.
\eeq
\end{lemma}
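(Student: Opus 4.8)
The plan is to split the integral in \eqref{erb7} over the three regions determined by \eqref{trU}: the outer region $|x-\lambda t|\ge 2\delta^{1/3}$, the inner core $|x-\lambda t|<\delta^{1/3}$, and the two transition annuli $\delta^{1/3}\le|x-\lambda t|<2\delta^{1/3}$. In the outer region $w$ is locally constant, so $w_t=f(w)_x=w_{xx}=0$ and the contribution is exactly zero. The bulk of the work is in the inner core, where $w(t,x)=U\bigl((x-\lambda t)/\delta\bigr)$ is a genuine traveling profile for the \emph{modified} flux $\Tilde f$; here one computes directly that $w_t+\Tilde f(w)_x-\delta w_{xx}=0$ by the ODE \eqref{Utvd} (after the rescaling $y=(x-\lambda t)/\delta$). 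So the only error in the core comes from replacing $\Tilde f$ by $f$, namely $\bigl(\Tilde f(w)-f(w)\bigr)_x = -\ve_0\,(w^2)_x = -2\ve_0\, w\, w_x$. Integrating $|w_x|$ over the core gives a total variation bounded by $|u^+-u^-|$, and $w$ is bounded, so this contributes $\O(1)\cdot\ve_0\,|u^+-u^-|$.

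Next I would handle the transition annuli. Here $w = \vp\cdot U(\cdot/\delta) + (1-\vp)u^\pm$ with $\vp=\vp\bigl(\pm(x-\lambda t)/\delta^{1/3}\bigr)$, so all three terms $w_t$, $f(w)_x$, $\delta w_{xx}$ must be estimated crudely. The key quantitative input is the exponential decay of the traveling profile: since the shock is Liu-admissible for $\Tilde f$ with the strict quadratic gap \eqref{fb5}, the profile $U(y)$ approaches $u^\pm$ at an exponential rate in $y$, so at $|y|=\delta^{1/3}/\delta = \delta^{-2/3}$ one has $|U(y)-u^\pm| = \O(1)\,e^{-c\,\delta^{-2/3}}$ and likewise $|U'(y)|$, $|U''(y)|$ are exponentially small. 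Consequently on the annuli $|w-u^\pm|$, $|w_x|$ (coming both from $U'/\delta$ and from $\vp'/\delta^{1/3}$ times the exponentially small $|U-u^\pm|$), and $|w_{xx}|$ are all exponentially small in $\delta^{-2/3}$; multiplying by the annulus width $\O(\delta^{1/3})$ and the Lipschitz constants of $f$ still leaves something that tends to $0$ faster than any power of $\delta$. Hence for $\delta$ small enough the annular contribution is $\le \ve_0\,|u^+-u^-|$, and in fact negligible.

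I expect the main obstacle to be making the exponential-decay estimate on $U$ clean and self-contained. One needs: (a) that $\lambda$ and $C$ from \eqref{RHL}, \eqref{CC} make $u^-$ and $u^+$ rest points of the ODE $U'=\Tilde f(U)-\lambda U + C$; (b) that the strict Liu condition \eqref{fb5} forces $\Tilde f(U)-\lambda U + C$ to have the correct sign between $u^-$ and $u^+$ so the orbit is heteroclinic and monotone; and (c) that the linearization at each endpoint is hyperbolic with a rate bounded below uniformly in $\nu$ (this uses the uniform quadratic gap $\ve_0(u-u^-)(u^+-u)/2$, which gives $\frac{d}{du}\bigl(\Tilde f(u)-\lambda u + C\bigr)\le -\ve_0(u^+-u^-)/2<0$ at $u=u^-$ and the symmetric bound at $u^+$). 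From (c) the standard stable-manifold/Gronwall argument gives $|U(y)-u^\pm|\le Ce^{-c|y|}$ with $c\ge \ve_0|u^+-u^-|/2$, which is exactly what feeds the annular estimate. Everything else is bookkeeping: change of variables $y=(x-\lambda t)/\delta$, the product rule for $w=\vp U+(1-\vp)u^\pm$, and collecting the three regional bounds into \eqref{erb7}.
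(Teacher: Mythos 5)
Your proof is essentially the paper's own: the same three-region splitting at $|x-\lambda t|=\delta^{1/3},\,2\delta^{1/3}$, the same inner-core computation where the only error is $\bigl(f(w)-\Tilde f(w)\bigr)_x=2\ve_0 w\,w_x$ integrated over the monotone profile to give $\O(1)\cdot\ve_0|u^+-u^-|$, and the same exponential-decay bound on the transition annuli coming from the strict quadratic gap (\ref{fb5}), which beats the negative powers of $\delta$ from the cutoff and the rescaling. The only quibble is a sign slip in your linearization remark: the bound $\frac{d}{du}\bigl(\Tilde f(u)-\lambda u+C\bigr)\le -\ve_0(u^+-u^-)/2$ holds at $u^+$ (attracting rest point), while at $u^-$ the derivative is bounded \emph{below} by $+\ve_0(u^+-u^-)/2$ (repelling, giving decay as $s\to-\infty$); this does not affect the argument, and the paper sidesteps linearization entirely by integrating the differential inequality $U'\ge \tfrac{\ve_0^2}{4}\,(u^+-U)$ directly.
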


\begin{proof}
The integral in (\ref{erb7}) ranges over three distinct regions: (1) the internal region where
$\Tilde U$ coincides with the viscous shock, (2) an external region where $\Tilde U= u^-$ or 
$\Tilde U=u^+$,  and  (3) an intermediate region where the interpolation occurs.  
We analyze each part below. 

{\bf 1.} For  $|x-\lambda t|\leq  \delta^{1/3}$ one has $w(t,x)=\Tilde U(x-\lambda t)=U(\frac{x-\lambda t}{\delta})$.
On this region there holds
$$w_t + f'(w) w_x -\delta w_{xx}~=~-\frac{\lambda}{\delta} U' +\frac{ f'(U)}{\delta} U' - \frac{1}{\delta} U''
~=~\frac{1}{\delta} \bigl[ f'(U) -\Tilde  f'(U)\bigr] U'.$$
Indeed, the left hand side would vanish if $f$ were replaced by $\Tilde f$.
Performing the change of variable $s=x/\delta$, one obtains
\begin{align}
& \int \Big|w_t + f'(w) w_x -\delta w_{xx}\Big|\, dx~=~
\frac{1}{\delta}
\int \bigg| \Big[f'\bigl(U(x/\delta)\bigr) -\Tilde f'\bigl(U(x/\delta)\bigr) \Big] U'(x/\delta)\bigg|\, dx
\nonumber\\
&\qquad \ds =~
\int\Big| f'\bigl(U(s)\bigr) -\Tilde f'\bigl(U(s)\bigr) \Big| \cdot \bigl|U'(s)\bigr|\, ds
~=~\O(1)\cdot \ve_0\,|u^+-u^-|.
\label{icv}
\end{align}

{\bf 2.} On the domain where
$|x-\lambda t|\geq 2\delta^{1/3}$ the function $w$ is constant.  Hence the integrand 
in (\ref{erb7}) is trivially zero.

{\bf 3.} 
To estimate the error on  the region where ~$x-\lambda t \in I_\delta\doteq \bigl[\delta^{1/3}, ~2\delta^{1/3}
\bigr]$  we observe that, by (\ref{Utvd}) and (\ref{fb5}), for all $s>\!>1$ suitably large  one has
$$U' (s) ~\geq~ \frac{\ve_0}{2}\left(U(s)-u^-\right)\left(u^+-U(s)\right) 
~\geq~\frac14 \ve^2_0\bigl(u^+-U(s)\bigr)\,.$$
Hence
$$ u^+-U(s) ~\le~ e^{-\alpha s}, \qquad \mbox{with}\quad \alpha= \ve^2_0/4.$$
For $x-\lambda t\in I_\delta$ we now have the estimate
$$
\sup_{s\geq \delta^{1/3}}\, \bigl| u^+- U(s/\delta)\bigr| ~=~  \bigl| u^+- U(\delta^{-2/3})\bigr|~
~=~ \O(1) \cdot e^{-\alpha /\delta^{2/3}}\,.
$$
Setting $$\vp_{\delta^{1/3}}(s)~\doteq~\vp(s/\delta^{1/3})$$
and observing that
$$ 
\left\| (\vp_{\delta^{1/3}} )_x\right\|_{\L^\infty}~=~ \delta^{-1/3} ,\qquad
\left\| (\vp_{\delta^{1/3}} )_{xx}\right\|_{\L^\infty}~=~ \O(1) \cdot \delta^{-2/3} ,
$$
we thus have the bound
\begin{align*} 
& \bigl| w_t +  f'(w) w_x -\delta w_{xx} \bigr|
 \\
&  \qquad\leq~ \O(1)\cdot \Big\{1+ \left\| (\phi_{\delta^{1/3}} )_x\right\|_{\L^\infty} +
\left\| (\phi_{\delta^{1/3}} )_{xx}\right\|_{\L^\infty} \Big\}
\\
& \qquad \qquad\qquad 
 \cdot \sup_{s\geq \delta^{1/3}}\,\Big\{  \bigl| u^+- U(s/\delta)\bigr|  + \delta^{-1} \bigl| U'(s/\delta)\bigr|
 + \delta^{-2}\bigl| U''(s/\delta)\bigr|\Big\}
 \\
&  \qquad \leq~\O(1)\cdot \delta^{-3}\cdot e^{-\alpha /\delta^{2/3}}.
\end{align*}
%
%
The right hand side can be rendered arbitrarily small by choosing 
$\delta>0$ small enough. The same estimate holds on the interval 
$- I_\delta\doteq \bigl[-2\delta^{1/3}, \,-\delta^{1/3}
\bigr]$.

In particular, for $\delta>0$ small enough, we achieve
$$ \int_{-I_\delta\cup I_\delta}  \Big|
w_t +  f(w)_x 
-\delta w_{xx}\Big|\, dx\, \leq ~ \ve_0^2~\leq~\ve_0\,  |u^+-u^-|.$$
Together with (\ref{icv}), this yields (\ref{erb7}).
\end{proof}

In a solution constructed by standard front tracking algorithm, 
the states to the left and right of a wave front
remain constant in time.   However, according to the front tracking algorithm described in Section~\ref{sec:2}
a piecewise constant approximation $u_\nu$  changes in time along 
intervals where a local maximum or minimum is attained.   We thus need to 
understand how a viscous traveling profile changes, as the left and right states
$u^-, u^+$ are modified.  

As a first step toward this goal, the next lemma  provides an estimate on the difference between solutions of  two ODEs.

\begin{lemma}\label{l:53}
Let $F,G$ be $\C^1(\R)$ functions such that, for some $U^+\in \R$ and some constants $\Delta_0, \ve_1>0$, there holds
\begin{align}
& F(U^+) \,=\, G(U^+)\,=\,0,\label{FG0} \\
 &F'(U)\,\leq -\ve_1\,,\qquad    G'(U)\,\leq -\ve_1\qquad\qquad \hbox{for}\quad U<U^+,\label{FG1} \\
& \|F-G\|_{\C^1}\,\leq\,\Delta_0\,. \label{FG2} 
\end{align}

Let  $U,V:[s_0, +\infty[\,\mapsto\R$  be functions that satisfy
\begin{align}
&U'(s)~=~F(U(s)), \qquad V'(s)=G(V(s)), \qquad \qquad s\ge s_0\,,\label{UVs1} \\[2mm]
& u_0\doteq U(s_0)<U^+,\quad v_0\doteq V(s_0)< U^+,\qquad 
\bigl|U(s_0)-V(s_0)\bigr| \le \Delta_0 \,. \label{UVs0}
\end{align}
Then 
\bel{UV}\sup_{s\geq s_0} ~\bigl|U(s)-V(s)\bigr|  ~\leq~   
 \left(1 +   \frac{U^+-v_0}{\ve_1} \right)\cdot \Delta_0\,.
\eeq 
\end{lemma}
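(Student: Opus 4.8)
The plan is to compare the two ODE flows by a Gronwall-type argument applied to the difference $z(s)\doteq U(s)-V(s)$, while separately controlling the size of the ``target gap'' $U^+-V(s)$ using the one-sided dissipativity hypothesis \eqref{FG1}. First I would note that, since $G'(U)\le -\ve_1<0$ for $U<U^+$ and $G(U^+)=0$, the point $U^+$ is an asymptotically stable equilibrium for the $V$-equation approached monotonically from below: if $v_0<U^+$ then $V(s)<U^+$ for all $s\ge s_0$ and $V$ is nondecreasing. A direct computation gives $\frac{d}{ds}\bigl(U^+-V(s)\bigr)=-G(V(s))=-(G(V(s))-G(U^+))\le -\ve_1\bigl(U^+-V(s)\bigr)$ (using the mean value theorem and $G'\le -\ve_1$ on the relevant range), hence $U^+-V(s)\le (U^+-v_0)e^{-\ve_1(s-s_0)}$. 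The same reasoning applies to $U$, so both $U(s)$ and $V(s)$ stay below $U^+$ for all $s\ge s_0$, which keeps us inside the region where \eqref{FG1}--\eqref{FG2} are valid.

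Next I would write the differential inequality for $z=U-V$:
\[
z'(s)~=~F(U(s))-G(V(s))~=~\bigl[F(U(s))-F(V(s))\bigr]+\bigl[F(V(s))-G(V(s))\bigr].
\]
The first bracket is bounded by $-\ve_1 z(s)$ when $z(s)>0$ (and by $+\ve_1|z(s)|$ in absolute value in general, but the sign works in our favor), since $F'\le -\ve_1$; the second bracket is bounded in absolute value by $\|F-G\|_{\C^1}\le \Delta_0$. Thus $\frac{d}{ds}|z(s)|\le -\ve_1|z(s)|+\Delta_0$ wherever $z\ne 0$, and a standard comparison argument gives $|z(s)|\le |z(s_0)|e^{-\ve_1(s-s_0)}+\frac{\Delta_0}{\ve_1}\bigl(1-e^{-\ve_1(s-s_0)}\bigr)\le \Delta_0+\frac{\Delta_0}{\ve_1}$, using $|z(s_0)|\le\Delta_0$ from \eqref{UVs0}. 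This already yields a bound of the form $\bigl(1+\tfrac1{\ve_1}\bigr)\Delta_0$, which is in the right spirit.

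To land exactly the claimed bound $\bigl(1+\frac{U^+-v_0}{\ve_1}\bigr)\Delta_0$ — which is sharper when $U^+-v_0<1$ and the natural statement when the flux difference $\Delta_0$ controls the perturbation only near $U^+$ — I would instead run the estimate on $\widetilde z(s)\doteq \bigl(U(s)-U^+\bigr)-\bigl(V(s)-U^+\bigr)$ together with the a priori bound $U(s),V(s)\le U^+$ and the monotone convergence just established, splitting $[s_0,\infty)$ into the transient where $U^+-V(s)$ is still of order $U^+-v_0$ and the tail where it is exponentially small; on the tail the inhomogeneous term $F(V)-G(V)$ is itself $\O(\Delta_0)\cdot(U^+-V(s))$-small because $F$ and $G$ vanish at $U^+$, so the contribution to $\sup|z|$ is $\O(\Delta_0)$ with the factor $(U^+-v_0)/\ve_1$ coming precisely from integrating $e^{-\ve_1(s-s_0)}$ against the transient. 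The main obstacle I anticipate is bookkeeping the constants so that the final coefficient is exactly $1+\frac{U^+-v_0}{\ve_1}$ rather than a cruder $1+\frac1{\ve_1}$: this requires using the vanishing of $F-G$ at $U^+$ (not merely $\|F-G\|_{\C^1}\le\Delta_0$) to replace $\Delta_0$ by $\Delta_0\cdot(U^+-V(s))$ in the tail estimate, and being careful that $\Delta_0$ appears both as the $\C^1$ bound on $F-G$ and as the initial data bound $|U(s_0)-V(s_0)|$, so the two contributions must be added, producing the leading $1\cdot\Delta_0$ term.
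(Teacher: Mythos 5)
Your plan is correct in substance and reaches the lemma by a genuinely different route from the paper. The paper sets $W=U-V$, writes $W'=-\vp_1(s)W+\vp_2(s)\,\Delta_0\,(U^+-V(s))$ with $\vp_1\ge\ve_1$ and $|\vp_2|\le 1$ (the bound $|F(V)-G(V)|\le\Delta_0\,(U^+-V)$ being exactly the point you flag, coming from $F(U^+)=G(U^+)=0$ together with the $\C^1$ bound), and then changes variables $s\mapsto v=V(s)$, using $0\le (U^+-v)/G(v)\le 1/\ve_1$ to get $\frac{d}{dv}|\widetilde W|\le \Delta_0/\ve_1$ and hence $|W|\le \Delta_0+\Delta_0(V(s)-v_0)/\ve_1$. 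You instead stay in the time variable and use the exponential decay $U^+-V(s)\le (U^+-v_0)e^{-\ve_1(s-s_0)}$; this works, and your worry about the bookkeeping is unfounded: once you have $\frac{d}{ds}|U-V|\le -\ve_1|U-V|+\Delta_0\,(U^+-V(s))$, simply drop the damping term and integrate, since $\int_{s_0}^{\infty}\Delta_0(U^+-v_0)e^{-\ve_1(\sigma-s_0)}\,d\sigma=\Delta_0(U^+-v_0)/\ve_1$, then add $|U(s_0)-V(s_0)|\le\Delta_0$ to land exactly on $\bigl(1+\frac{U^+-v_0}{\ve_1}\bigr)\Delta_0$. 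In particular no transient/tail splitting is needed, because the improved bound $|F(V)-G(V)|\le\Delta_0(U^+-V(s))$ holds for all $s\ge s_0$, not only in the tail; your time integral is precisely the paper's $v$-integral after the substitution $v=V(\sigma)$, so the two arguments are equivalent, yours making the decay rate explicit and the paper's avoiding it. Two small points to fix in a write-up: the parenthetical claim that the first bracket is bounded ``by $+\ve_1|z(s)|$ in absolute value in general'' is false (FG1 bounds $F'$ only from above, not below), though harmless since only the signed bound $\mathrm{sign}(z)\bigl(F(U)-F(V)\bigr)\le -\ve_1|z|$ is used; and the assertion $U(s),V(s)<U^+$ for all $s$ should be justified by uniqueness for the $\C^1$ vector fields at the equilibrium $U^+$, as you indicate.
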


\begin{proof}
We start by observing that $F$, $G$ are positive for $U<U^+$. Hence 
both functions $U$, $V$ 
are strictly increasing on $[s_0, +\infty[$ and approach $U^+$ as $s\to +\infty$.

Defining $W(s)\doteq U(s)-V(s)$, one has
\begin{align}
W'(s)&
~=~F\bigl(U(s)\bigr)- G\bigl(V(s)\bigr) 
 ~=~ -\vp_1(s) \cdot W(s) + \vp_2(s) \cdot \Delta_0\cdot  \bigl(U^+-V(s)\bigr)\,, \label{Wexp}
\end{align}
where
$$\vp_1(s) ~\doteq~ - \frac{F\bigl(U(s)\bigr) - F\bigl(V(s)\bigr)}{U(s)-V(s)} ,
\qquad
\vp_2(s) ~\doteq~ \frac{F(V(s))- G(V(s))}{\Delta_0\cdot  \bigl(U^+-V(s)\bigr)}.
$$
In the above setting, we claim that
\begin{equation} \label{vp12}
\vp_1(s) \ge \ve_1>0, 
\qquad
|\vp_2(s)|\le1 \qquad \mbox{ for all}\quad  s\ge s_0\,.
\end{equation}
Indeed, the first estimate follows immediately from \eqref{FG1}. 
The second estimate is obtained by writing
$$
\bigl|F(V(s))- G(V(s))\bigr| 
~\le~ \|F-G\|_{\C^1}\cdot \int_s^{+\infty} V'(\sigma)\,d\sigma~ =~ \Delta_0\cdot  \bigl|U^+-V(s)\bigr|\,.
$$
%
{}From (\ref{Wexp}) and  (\ref{vp12}) it follows
\bel{Wexp2}\frac d{ds}|W(s)|~\le ~ - \ve_1 \cdot |W(s)| +  \Delta_0\cdot  \bigl|U^+-V(s)\bigr|\,.
\eeq
We now denote by $v\mapsto V^{-1}(v)$  the inverse of the strictly increasing map $s\mapsto V(s)$, 
and make a coordinate change. 
Defining $\widetilde W(v) \doteq W \bigl(V^{-1}(v)\bigr)$, 
from (\ref{Wexp2}) we obtain
$$
\ds \frac{d }{ dv}\bigl|\widetilde W(v)\bigr|
~\le ~\Big(-\ve_1\bigl|\widetilde W(v)\bigr| +  \Delta_0\cdot  
\left|U^+-v\right|\Big) \cdot \frac{1}{G(v)}~
 \le ~ \Delta_0 \frac{\left|U^+-v\right|}{G(v)}~\le ~ \frac{\Delta_0}{\ve_0}\,.
 $$
 Here we used the estimate $0\le  \frac{U^+-v}{G(v) - G(U^+)} \le \frac{1}{ \ve_1}$ for $v<U^+$, which follows from 
 \eqref{FG0}-\eqref{FG1}.  
Furthermore, the condition (\ref{UVs0}) implies that $|\widetilde W(v_0)|\le \Delta_0$. 
Therefore 
$$
\bigl|\widetilde W(v)\bigr|~\le ~\bigl|\widetilde W(v_0)\bigr| + \frac{\Delta_0}{\ve_1} \left(v-{v_0}\right) 
~ \le~ \Delta_0\cdot \left(1 +   \frac{U^+-v_0}{\ve_1} \right),
 $$
from which we conclude that   
$$\bigl|W(s)\bigr| ~=~ \bigl|\widetilde W(V(s))\bigr|~\le ~ \Delta_0\cdot \left(1 +   \frac{U^+-v_0}{\ve_1} \right)
\qquad \quad \forall s\ge s_0\,.
$$
This yields  (\ref{UV}).
\end{proof}

\begin{lemma}\label{l:54} Let $f,\Tilde f$ be  as in Corollary~\ref{c:41}, so that  $\Tilde f(u) ~\doteq~ f(u) - \ve_0u^2$. 
Let $U$ be a viscous shock profile satisfying (\ref{Utvd}) with left and right states $u^-<u^+$, 
and let $V$ be a similar profile, but with left and right states $v^-<v^+$. 
Let $\ve_0\in\, ]0,1]$ and assume that
\bel{uv-pm}
|u^+-u^-|\,\geq \,\ve_0\,,\qquad  |v^+-v^-|\,\geq\, \ve_0\,.\eeq
Then there exists a constant $C=C(\ve_0)>0$, independent of $u^\pm, v^\pm$, 
such that
\bel{UVdis}
\|U-V\|_{\L^\infty}~\leq~ C(\ve_0) 
\cdot \bigl(|u^+-v^+|+|u^--v^-|\bigr)\,.\eeq
\end{lemma}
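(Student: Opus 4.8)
The plan is to decompose the difference $U-V$ into the part living in the ``core'' of the two profiles and the part in the tails, using Lemma~\ref{l:53} as the workhorse. First I would recall that $U$ and $V$ solve the autonomous ODEs $U' = \Tilde f(U) - \lambda_U U + C_U$ and $V' = \Tilde f(V) - \lambda_V V + C_V$, where $\lambda_U, C_U$ (resp.\ $\lambda_V, C_V$) are the Rankine--Hugoniot speed and constant associated with the states $u^\pm$ (resp.\ $v^\pm$). Writing $F(U) \doteq \Tilde f(U) - \lambda_U U + C_U$ and $G(V) \doteq \Tilde f(V) - \lambda_V V + C_V$, the normalization $U(0) = (u^++u^-)/2$, $V(0) = (v^++v^-)/2$ from (\ref{Utvd}) gives $|U(0)-V(0)| \leq \tfrac12(|u^+-v^+|+|u^--v^-|) =: \tfrac12\,d$. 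I would set $\Delta_0 \doteq C_1(\ve_0)\cdot d$ for a constant $C_1$ to be fixed, and check the three hypotheses (\ref{FG0})--(\ref{FG2}) of Lemma~\ref{l:53} on the interval $s \ge 0$, with $U^+ \doteq u^+$. Hypothesis (\ref{FG0}): $F(u^+) = 0$ by definition of $C_U$. Hypothesis (\ref{FG2}): $\|F - G\|_{\C^1} \leq |\lambda_U - \lambda_V|\,(1 + \sup|U|) + |C_U - C_V|$, and since $\lambda_U, C_U$ depend Lipschitz-continuously on $(u^+, u^-)$ on the compact range of admissible states (using $\Tilde f \in \C^1$ and, by (\ref{uv-pm}), that the denominators $u^+ - u^-$, $v^+ - v^-$ are bounded below by $\ve_0$), we get $\|F-G\|_{\C^1} \le C_1(\ve_0)\, d = \Delta_0$. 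Note $G(u^+) = \Tilde f(u^+) - \lambda_V u^+ + C_V$ need not vanish, but $|G(u^+)| = |G(u^+) - G(v^+)| \le \|F-G\|_{\C^1}$-type bound is not quite what Lemma~\ref{l:53} wants; this is the point that needs care (see below).

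The key quantitative input is hypothesis (\ref{FG1}): $F'(U) = \Tilde f'(U) - \lambda_U \le -\ve_1$ for $U < u^+$, and similarly for $G$. This is exactly where Corollary~\ref{c:41} enters: since the shock with states $u^- < u^+$ is Liu-admissible for $f_\nu$ and $u^+ - u^- \ge \ve_0$, inequality (\ref{fb5}) gives $\Tilde f(u) - \Tilde f(u^-) - \lambda_U(u - u^-) \ge \tfrac{\ve_0}{2}(u-u^-)(u^+-u) \ge 0$, so the chord of $\Tilde f$ lies below the graph; differentiating the convexity-type estimate, or more directly writing $F(u) = -\bigl[\Tilde f(u^+) - \Tilde f(u) - \lambda_U(u^+ - u)\bigr] \ge \tfrac{\ve_0}{2}(u - u^-)(u^+ - u)$ for $u \in [u^-, u^+]$ gives that $F > 0$ strictly in the interior and, crucially, that near $u^+$ one has $F(u) \ge \tfrac{\ve_0}{2}(u^+ - u^-)(u^+ - u) \ge \tfrac{\ve_0^2}{2}(u^+ - u)$; combined with $F(u^+)=0$ this forces $F'(u^+) \le -\ve_0^2/2$, and by continuity (shrinking $\ve_1$ if necessary, or arguing that $F/(u^+ - u)$ is bounded below) one extracts a uniform $\ve_1 = \ve_1(\ve_0) > 0$ with $F' \le -\ve_1$ on $[u^-, u^+)$. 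Actually the cleanest route is to bypass pointwise $F'$ bounds and apply the coordinate-change argument of Lemma~\ref{l:53} directly using $F(u) \ge \ve_1(u^+ - u)$; the conclusion (\ref{UV}) only ever uses $F'$ through the ratio $(U^+ - v)/(F(v) - F(U^+)) \le 1/\ve_1$, so I would if needed restate the Lemma's hypothesis in that weaker integrated form, which holds here with $\ve_1 = \ve_0^2/2$.

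Once Lemma~\ref{l:53} applies, its conclusion (\ref{UV}) reads
\[
\sup_{s \ge 0} |U(s) - V(s)| \le \Bigl(1 + \frac{u^+ - v_0}{\ve_1}\Bigr)\,\Delta_0 \le \Bigl(1 + \frac{\|\bar u\|_{\L^\infty} + \text{const}}{\ve_1}\Bigr)\,C_1(\ve_0)\,d,
\]
where $v_0 = V(0)$ and $u^+ - v_0$ is bounded in terms of the (a priori bounded) range of states, giving the bound on $[0, +\infty)$ with a constant $C(\ve_0)$ depending only on $\ve_0$ (and the $\C^2$ norms of $f,g$, which are fixed). For $s \le 0$ I would run the symmetric argument: reverse the orientation $s \mapsto -s$, so that $U, V$ now flow toward $u^-, v^-$ respectively, and apply Lemma~\ref{l:53} with $U^+$ replaced by $u^-$ and $F, G$ replaced by $-F, -G$ (which satisfy the required sign conditions for $U > u^-$ since $F > 0$ on $(u^-, u^+)$ and $F'(u^-) \ge \ve_1$ by the same chord estimate at the left endpoint). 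This yields the matching bound on $(-\infty, 0]$, and taking the maximum over the two half-lines gives (\ref{UVdis}) with $C(\ve_0) = 2\bigl(1 + (\|\bar u\|_{\L^\infty} + \text{const})/\ve_1\bigr) C_1(\ve_0)$.

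The main obstacle I anticipate is the mismatch at the endpoint: $G(u^+) \ne 0$ in general, so the two ODEs for $U$ and $V$ have \emph{different} equilibria, and Lemma~\ref{l:53} as literally stated presupposes a common $U^+$. The fix is to observe $|u^+ - v^+| \le d$, so the equilibria are $O(d)$-close; one then either (a) absorbs the gap by comparing $V$ not to $u^+$ but to $v^+$ and noting $|u^+ - v^+|$ is already of the size we are trying to bound, applying Lemma~\ref{l:53} with the roles symmetrized and adding $|u^+ - v^+|$ to the final estimate, or (b) slightly strengthens Lemma~\ref{l:53} to allow $|F(U^+)| \le \Delta_0$ rather than $F(U^+) = 0$ — an inspection of its proof shows the extra term only contributes an additional $O(\Delta_0)$ to $\widetilde W$, since the perturbation enters through $\vp_2$ which stays bounded. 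Either way the bookkeeping is routine; the genuinely non-trivial ingredient is the uniform lower bound $\ve_1(\ve_0) > 0$ on $-F'$ near the equilibrium, which is precisely what the Liu-admissibility-with-margin estimate (\ref{fb5}) of Corollary~\ref{c:41} supplies, and this is why the hypothesis $|u^+ - u^-| \ge \ve_0$ (and likewise for $v^\pm$) is indispensable.
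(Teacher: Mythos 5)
You correctly identify the two ingredients needed — Corollary~\ref{c:41} for a uniform lower bound on $-(F^u)'$ near the equilibrium, and a shift (or a strengthening of Lemma~\ref{l:53}) to cope with the mismatch $G(u^+)\neq 0$ — but the attempt to apply Lemma~\ref{l:53} on all of $s\ge 0$ starting from $s_0=0$ does not work. Hypothesis (\ref{FG1}) must hold on the \emph{entire} range that $U(s)$ and $V(s)$ sweep out, which starting from $s_0=0$ is $\bigl[\tfrac{u^++u^-}{2},u^+\bigr)$. On that interval $(F^u)'$ need not be negative: since $F^u(u^-)=F^u(u^+)=0$ and $F^u>0$ inside, $F^u$ has an interior maximum, and for non-convex $\Tilde f$ that maximum can lie to the right of the midpoint. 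The derivative bound $(F^u)'\le -\ve_0^2/4$ from (\ref{fb5}) is only local, on $[u^+-c_1\ve_0,\,u^+]$ (formula (\ref{Fp-})).

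Your fallback — replacing (\ref{FG1}) by the ``integrated'' bound $F(u)\ge \ve_1(u^+-u)$ — does not rescue this. The proof of Lemma~\ref{l:53} uses the pointwise derivative bound twice: to get $\vp_1(s)=-\frac{F(U(s))-F(V(s))}{U(s)-V(s)}\ge\ve_1$ (needed to drop the damping term in (\ref{Wexp2}); at minimum one needs $\vp_1\ge 0$, which is a difference-quotient statement about $F$ between $U(s)$ and $V(s)$, not a bound on $F$ itself), and again for $\frac{U^+-v}{G(v)-G(U^+)}\le \frac 1{\ve_1}$. Neither follows from a lower bound on $F$ alone when $F$ is allowed to increase.

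The paper therefore splits $[0,+\infty)$ at the time $s_0$ when $U(s_0)=u^+-c_1\ve_0$. On $[0,s_0]$ it does \emph{not} use Lemma~\ref{l:53} at all; instead it uses the lower bound $F^u(u)\ge c_1\ve_0^3/4$ (estimate (\ref{Fbig})) to reparametrize by the value $u=U(s)$ and run a Gr\"onwall estimate over the bounded $u$-range, yielding $|U(s)-V(s)|\le C_3\Delta$ there. Only on $[s_0,+\infty)$, where the derivative bound (\ref{Fp-})--(\ref{Fpv-}) genuinely holds, does it invoke Lemma~\ref{l:53}, after the clean translation $\Hat V(s)\doteq V(s)+u^+-v^+$ that makes $G(u^+)=F^v(v^+)=0$ exact. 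Your sketch collapses these two regimes into one and in doing so skips the part where $F'$ can have the wrong sign; that is a genuine gap, not mere bookkeeping.
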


\begin{proof}
{\bf 1.} Set
\bel{Delta}
\Delta\,\doteq \,|u^+-v^+|+|u^--v^-| \eeq
and define the maps
\bel{Fuab}F(u,a,b) ~\doteq~ \Tilde f(u) - \left[\Tilde f(a) +  \frac{\Tilde f(b) - \Tilde f (a)}{ b-a} (u-a) \right]\,, 
\qquad u,a,b\in \R\,, \ a\not=b\,, \eeq
\bel{Fuvw}
w~\mapsto~F^u(w)~\doteq~ F(w,u^-,u^+)\,, \qquad w~\mapsto~F^v(w)~\doteq~ F(w,v^-,v^+)\,.
\eeq
With this notation, the viscous shock profiles $U$, $V$ satisfy
$$
U'(s) = F^u\bigl(U(s)\bigr) \,, \qquad  V'(s) = F^v\bigl(V(s)\bigr) , 
\qquad
U(0)=\frac{u^++u^-}2, \qquad V(0)=\frac{v^++v^-}2.
$$
%
Notice that $|U(0) - V(0)|\leq \Delta$. 
We also observe that the map $F$ in (\ref{Fuab}), extended by continuity for $a=b$, is locally Lipschitz continuous in its variables $(u,a,b)$.
\v
{\bf 2. Estimates on $F^u$, $F^v$.}\quad We claim that, for some positive constant $c_1$ depending on $f$ but not on $u^\pm, v^\pm$, 
the derivatives of the functions in (\ref{Fuvw}) satisfy 
\begin{align}
\label{Fp-}(F^u)'(u~)\le~ - {\ve^2_0/ 4} \qquad &\forall u\in [u^+- c_1\ve_0, u^+]\,,\\[2mm]
\label{Fpv-}(F^v)'(v) ~\le ~  - {\ve^2_0/ 4}\qquad &\forall v \in[v^+- c_1\ve_0, v^+]\,.\end{align}
The proofs are based on (\ref{fb5}). Starting with the identity
$$
\Tilde f(u^+) + \Tilde\lambda (u-u^+)~ =~ \Tilde f(u^-) + \Tilde\lambda (u-u^-)\,,\qquad   \qquad 
\Tilde\lambda~=~\frac{\Tilde f(u^+) - \Tilde f (u^-)}{ u^+-u^-}\,,
$$
from (\ref{fb5}) it follows
$$
\frac{\Tilde f(u^+) - \Tilde f (u)}{u^+-u} ~\le~ \Tilde\lambda - \ve_0 \frac{u-u^-} 2\,,\qquad   
\hbox{for}~~u\in \,]u^-,u^+[\,.
$$
Letting $u\to u^+$ we deduce a bound on the derivative $\Tilde f'(u^+)$, namely
$$
\Tilde f'(u^+) ~\le~ \Tilde\lambda - \ve_0 \frac{u-u^-} 2\,.
$$
Using a  bound on the second derivative $\Tilde f''$,
we can find a constant $c_1>0$ small enough so that, for all $u\in  [u^+- c_1\ve_0, u^+]$,
$$
(F^u)'(u) ~=~ \Tilde f'(u) -  \Tilde\lambda  
~\le \,-\ve_0\frac{(u^+-u^-)}2 + c_1 \ve_0\, \|\Tilde f''\|_{\L^\infty} ~ \le~  -\ve_0\frac{(u^+-u^-)}4 \,.
$$
In view of (\ref{uv-pm}), we obtain (\ref{Fp-}). The proof of (\ref{Fpv-}) is analogous.

Next, we claim that,
\bel{Fbig}F^u(u)~\geq~\frac{c_1\ve_0^3}{ 4}~~~\qquad \forall~ u\in \Big[ \frac{u^+ + u^-}{ 2}, ~u^+-c_1\ve_0\Big].\eeq
Indeed, by (\ref{fb5}) and  (\ref{uv-pm}), for the above values of $u$ we have
$$
F^u(u)~\ge~  \frac{\ve_0 (u-u^-) (u^+-u)}{ 2}  ~\ge~\frac{\ve_0 (\ve_0/2) (c_1\ve_0)}{ 2}\,.$$
\v

In the remainder of the proof,
calling $s_0>0$ the point where $U(s_0)= u^+- c_1\ve_0$, we shall 
estimate the difference $U(s)-V(s)$ separately for $s\in [0, s_0]$ and for $s\in [s_0, +\infty[\,$.

\v
{\bf 3.}  To achieve an estimate for $s\in [0, s_0]$, we start with the inequality
\bel{dFuv}
\bega{rl}\ds
\frac{d}{ ds} \Big|U(s) -V(s)\Big|
&\le ~
\Big|F^u\bigl(U(s)\bigr) -F^u\bigl(V(s)\bigr)\Big|+ \Big|F^u\bigl(V(s)\bigr) -F^v\bigl(V(s)\bigr)\Big|
\\[4mm]
&\leq~C_1 \bigl|U(s)-V(s)\bigr| + C_2 \Delta
\enda
\eeq
for some positive constant $C_1$, $C_2$. 
These bounds follow  from the  Lipschitz continuity of $F$, $F^u$, $F^v$, defined at 
(\ref{Fuab})-(\ref{Fuvw}).

Since $s\mapsto U(s)$ is strictly increasing, we can consider the inverse function $U\mapsto s(U)$.
Defining
$\Tilde V(u) ~\doteq~ V\bigl(s(u)\bigr)$, by  (\ref{dFuv}) and (\ref{Fbig}) we obtain
$$
\frac{d}{ du} \Big| u -\Tilde V(u)\Big| \ds=~\frac{\frac{d}{ ds} \Big|U(s) -V(s)\Big|}{F^u \bigl(U(s)\bigr)}
\ds\leq~\Big( C_1 \bigl| u-\Tilde V(u)\bigr| + C_2 \Delta
\Big) \frac{4}{ c_1\ve_0^3}\,.
$$
Setting $u_0\doteq U(0)$ and $\Hat C_1\doteq  4C_1/ (c_1\ve_0^3)$, we obtain
$$
\left| u -\Tilde V(u)\right|~\le~ e^{\Hat C_1 (u-u_0)} \left| u_0 - \Tilde V(u_0)\right|
+ \frac{C_2}{C_1} \,\left(e^ {\Hat C_1 (u-u_0)}  -1\right)\cdot\Delta\,,
$$
for all $ u \in [u_0, u^+- c_1\ve_0]$. 
Since $\left|u_0 - \Tilde V(u_0) \right|= \left|U(0) - V(0)\right|\leq \Delta$,
we conclude that
\bel{UV3}
\bigl|U(s)-V(s)\bigr|~\leq~C_3\, \Delta\qquad\qquad \forall s\in [0, s_0]\eeq
for some constant $C_3$ independent of $u^\pm, v^\pm$.

%

\begin{figure}[htbp]
\begin{center}
\resizebox{1\textwidth}{!}{
\begin{picture}(0,0)%
\includegraphics{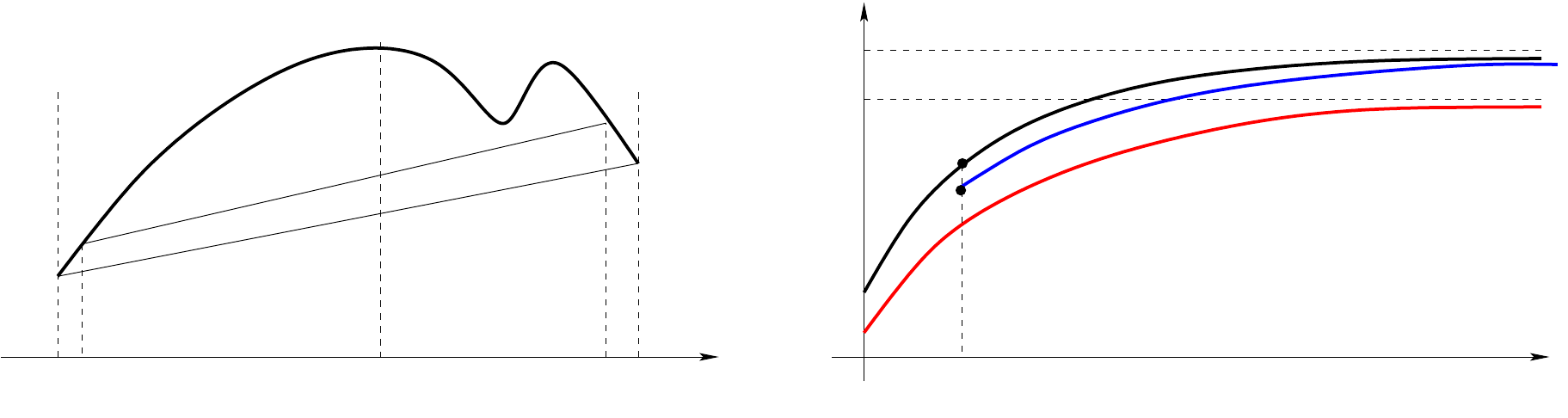}%
\end{picture}%
\setlength{\unitlength}{3947sp}%
\begin{picture}(14577,3876)(664,275)
\put(8251,3164){\makebox(0,0)[lb]{\smash{\fontsize{22}{26.4}\usefont{T1}{ptm}{m}{n}{\color[rgb]{0,0,0}$v^+$}%
}}}
\put(10726,3389){\makebox(0,0)[lb]{\smash{\fontsize{22}{26.4}\usefont{T1}{ptm}{m}{n}{\color[rgb]{0,0,0}$U$}%
}}}
\put(15226,3239){\makebox(0,0)[lb]{\smash{\fontsize{22}{26.4}\usefont{T1}{ptm}{m}{n}{\color[rgb]{0,0,1}$\hat V$}%
}}}
\put(8251,3689){\makebox(0,0)[lb]{\smash{\fontsize{22}{26.4}\usefont{T1}{ptm}{m}{n}{\color[rgb]{0,0,0}$u^+$}%
}}}
\put(14626,464){\makebox(0,0)[lb]{\smash{\fontsize{22}{26.4}\usefont{T1}{ptm}{m}{n}{\color[rgb]{0,0,0}$s$}%
}}}
\put(9501,464){\makebox(0,0)[lb]{\smash{\fontsize{22}{26.4}\usefont{T1}{ptm}{m}{n}{\color[rgb]{0,0,0}$s_0$}%
}}}
\put(3751,389){\makebox(0,0)[lb]{\smash{\fontsize{22}{26.4}\usefont{T1}{ptm}{m}{n}{\color[rgb]{0,0,0}$V(0)$}%
}}}
\put(901,464){\makebox(0,0)[lb]{\smash{\fontsize{22}{26.4}\usefont{T1}{ptm}{m}{n}{\color[rgb]{0,0,0}$u^-$}%
}}}
\put(6101,464){\makebox(0,0)[lb]{\smash{\fontsize{22}{26.4}\usefont{T1}{ptm}{m}{n}{\color[rgb]{0,0,0}$v^+$}%
}}}
\put(6601,464){\makebox(0,0)[lb]{\smash{\fontsize{22}{26.4}\usefont{T1}{ptm}{m}{n}{\color[rgb]{0,0,0}$u^+$}%
}}}
\put(1426,464){\makebox(0,0)[lb]{\smash{\fontsize{22}{26.4}\usefont{T1}{ptm}{m}{n}{\color[rgb]{0,0,0}$v^-$}%
}}}
\put(11251,2489){\makebox(0,0)[lb]{\smash{\fontsize{22}{26.4}\usefont{T1}{ptm}{m}{n}{\color[rgb]{1,0,0}$V$}%
}}}
\put(2176,3389){\makebox(0,0)[lb]{\smash{\fontsize{22}{26.4}\usefont{T1}{ptm}{m}{n}{\color[rgb]{0,0,0}$\tilde f$}%
}}}
\end{picture}%
}
\caption{\small Left: the flux $\Tilde f$ and the Liu-admissible jumps in Lemma~\ref{l:54}, 
with left, right states $u^- < u^+$ and  $v^- < v^+$. 
Right: the viscous shock profiles $U$, $V$ and the translated profile $\Hat V$ defined at (\ref{HVdef}).
}
\label{f:df49}
\end{center}
\end{figure}

{\bf 4.} It remains to estimate the difference $U-V$  for $s\in [s_0, +\infty[\,$.
Setting
\bel{HVdef}\Hat V(s) ~\doteq~V(s) + u^+-v^+\,, \eeq
(see Fig.~\ref{f:df49}), we trivially have
$$\bigl|U(s)-V(s)\bigr|~\leq~\bigl|U(s)-\Hat V(s)\bigr|+|u^+-v^+|~\leq~\bigl|U(s)-\Hat V(s)\bigr|+\Delta.$$
The difference $U-\Hat V$ will be estimated using  Lemma~\ref{l:53}, with $F(w)= F^u(w)$, $G(w)= F^v(w+v^+-u^+)$.
It is readily checked that
$$U'\,=\,F(U)\,,\qquad \quad \Hat V' \,=\, V' = F^v(V) \,=\, F^v(\Hat V  - u^+ +v^+) \,=\, G(\Hat V)\,,$$
and that (\ref{FG0}) holds at $w=u^+$ since $F(u^+)=G(u^+)=0$.
Moreover,
$$
\bigl|F(w) - G(w)\bigr|~\le ~\bigl| F^u(w) - F^v(w) \bigr| +  \bigl| F^v(w) -   F^v(w+v^+-u^+)\bigr|~\le~\left( C_2 +C_1 \right) \Delta\,,$$
where $C_1$, $C_2$ are the constants introduced  at (\ref{dFuv}).
Therefore (\ref{FG2}) holds, with $\Delta_0= \left( C_2 +C_1 \right) \Delta$.

Since $G'\bigl(w\bigr) = (F^v)'(w+v^+-u^+)$ with $w\in [u^+- c_1\ve_0, u^+]$, 
in view of (\ref{Fp-})-(\ref{Fpv-}),
on the interval $[u^+- c_1\ve_0, u^+]$ the conditions (\ref{FG1}) are satisfied with $\ve_1=\ve_0^2/4$.

By (\ref{UV3}) and (\ref{HVdef}) it follows
$$\bigl| U(s_0) - \Hat V(s_0)\bigr|~\leq~(C_3 + 1)\Delta.$$
An application of Lemma~\ref{l:53} thus yields the bound 
$$\bigl| U(s) - \Hat V(s)\bigr|~\leq~C_4\,\Delta$$
for some constant $C_4$ and all $s\in [s_0, \, +\infty[\,$.

Entirely similar arguments can be used to achieve a bound on $\bigl| U(s) - V(s)\bigr|$ for
$s\leq 0$, completing the proof of the lemma.
\end{proof}

\v
\begin{remark} {\rm
In view of Lemma~\ref{l:54}, if we replace $u^-, u^+$ by $v^-, v^+$, after the rescaling, truncation and mollification at (\ref{trU})-(\ref{mollU}), instead of $\Tilde U$ we obtain a profile $\Tilde V$ such that
\bel{HUC}\bigl\|\Tilde  V-\Tilde U\bigr\|_{\L^1\bigl([-2\delta^{1/3}, 2\delta^{1/3}]\bigr)} ~=~C_5 \cdot \delta^{1/3}\cdot \bigl( |u^+-v^+| +|u^--v^-|\bigr).
\eeq
for some constant $C_5$ depending on $f$ and $\ve_0$, but not on $u^\pm$, $v^\pm$ or $\delta$.

The same estimates can be proved for downward jumps of size $\geq\ve_0$, 
replacing $f$ by the flux 
function $g$. 
}
\end{remark}
\v
\subsection{Near-constant solutions at local maxima or minima}
Next, consider a front tracking approximation $u_\nu$, constructed as in Section~\ref{sec:2}.
For a fixed time $t$,  let $\bigl[ x_\alpha, \, x_{\alpha+1}\bigr]= [y_\alpha-h, y_\alpha+h]$ be an interval
where $u_\nu(t,x)= \Hat u_\alpha(t)$ is constant and attains a local maximum.
According to (\ref{24}), on this interval the front tracking solution is decreasing in time.

To construct a 
 solution of the viscous equation (\ref{3})  with a similar behavior, we seek a function  
$V:[-h,h]\mapsto \R$ such that 
$$\theta_\ve'(V')\bigl[ g(\Hat u_\alpha(t)) -  f(\Hat u_\alpha(t)) \bigr] V''
~=~\frac{d}{ dt} \Hat u_\alpha(t)~=~\frac{ f(\Hat u_\alpha(t)) -  g(\Hat u_\alpha(t)) }{
 x_{\alpha+1}(t)-x_{\alpha}(t)}\,.$$
 Here $\theta_\ve$ is defined at~\eqref{tdef}, see Fig.~\ref{f:df39}.
 After a cancellation, we have an equation for $V$
 \bel{V2} \theta'_\ve(V') \, V''~=~-\frac{1}{ 2h}\,.\eeq
To solve for $V$, we consider the auxiliary function $Z:[-1,1]\mapsto \R$, implicitly defined by
\bel{Zeq}  \theta\bigl(Z(s)\bigr)~=~\frac{1-s}{ 2}\,,\qquad\qquad
Z(0)=0.\eeq
Notice that, by (\ref{tprop1})-(\ref{tprop2}), it follows 
\bel{Zprop}\left\{\bega{ll} \theta\bigl(Z(-1)\bigr)=0,\qquad &Z(-1)=1,
\\[1mm]
\theta\bigl(Z(0)\bigr)={1/2} ,\qquad &Z(0)=0,\\[1mm]
\theta\bigl(Z(1)\bigr)=1,\qquad &Z(1)=-1,\enda\right.\qquad\qquad \theta'\bigl(Z(s)\bigr)  Z'(s)~=\,-\frac12\,. \eeq
Choosing the function $V$ so that  $V'(x)= \ve\,Z(x/h)$, for $x\in [-h,h]$ we obtain
$$\theta'_\ve\bigl(V'(x)\bigr) V''(x) 
~=~\theta'_\ve\bigl(\ve Z(x/h)\bigr)\frac{\ve Z'(x/h)}{ h}~
=~ \theta'\bigl( Z(x/h)\bigr)\frac{ Z'(x/h)}{ h}~=\,-\frac{1}{ 2h}\,,$$
as required by (\ref{V2}).

\begin{figure}[htbp]
\begin{center}
\resizebox{0.95\textwidth}{!}{
\begin{picture}(0,0)%
\includegraphics{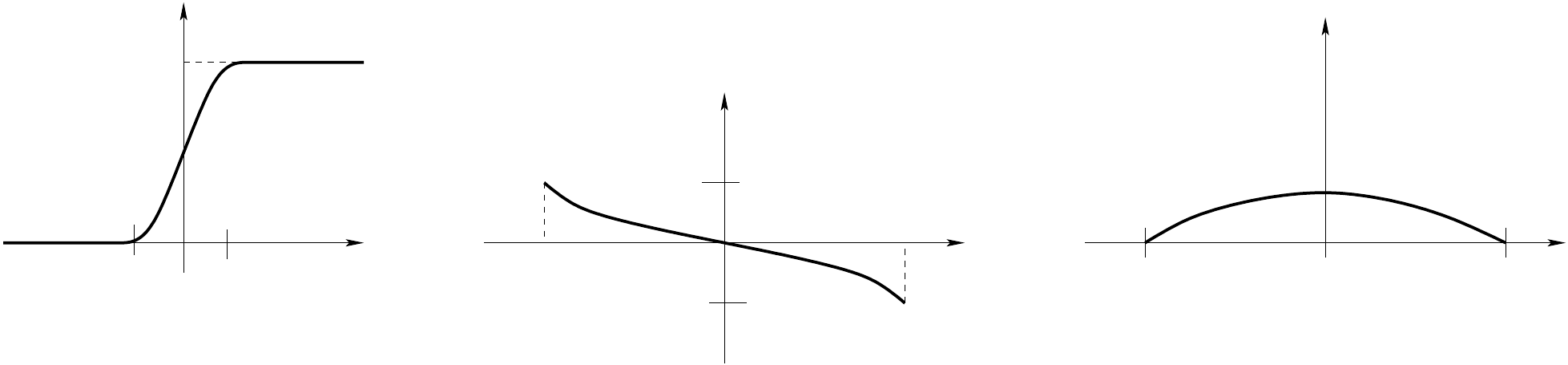}%
\end{picture}%
\setlength{\unitlength}{3947sp}%
\begin{picture}(15645,3624)(-1232,-373)
\put(7876,464){\makebox(0,0)[lb]{\smash{\fontsize{22}{26.4}\usefont{T1}{ptm}{m}{n}{\color[rgb]{0,0,0}$1$}%
}}}
\put(3976,539){\makebox(0,0)[lb]{\smash{\fontsize{22}{26.4}\usefont{T1}{ptm}{m}{n}{\color[rgb]{0,0,0}$-1$}%
}}}
\put(9901,464){\makebox(0,0)[lb]{\smash{\fontsize{22}{26.4}\usefont{T1}{ptm}{m}{n}{\color[rgb]{0,0,0}$y-h$}%
}}}
\put(11926,464){\makebox(0,0)[lb]{\smash{\fontsize{22}{26.4}\usefont{T1}{ptm}{m}{n}{\color[rgb]{0,0,0}$y$}%
}}}
\put(13576,464){\makebox(0,0)[lb]{\smash{\fontsize{22}{26.4}\usefont{T1}{ptm}{m}{n}{\color[rgb]{0,0,0}$y+h$}%
}}}
\put(5251,164){\makebox(0,0)[lb]{\smash{\fontsize{22}{26.4}\usefont{T1}{ptm}{m}{n}{\color[rgb]{0,0,0}$-\varepsilon$}%
}}}
\put(1426,2789){\makebox(0,0)[lb]{\smash{\fontsize{22}{26.4}\usefont{T1}{ptm}{m}{n}{\color[rgb]{0,0,0}$\theta_\varepsilon$}%
}}}
\put(976,389){\makebox(0,0)[lb]{\smash{\fontsize{22}{26.4}\usefont{T1}{ptm}{m}{n}{\color[rgb]{0,0,0}$\varepsilon$}%
}}}
\put(-224,389){\makebox(0,0)[lb]{\smash{\fontsize{22}{26.4}\usefont{T1}{ptm}{m}{n}{\color[rgb]{0,0,0}$-\varepsilon$}%
}}}
\put(4576,1289){\makebox(0,0)[lb]{\smash{\fontsize{22}{26.4}\usefont{T1}{ptm}{m}{n}{\color[rgb]{0,0,0}$\varepsilon Z$}%
}}}
\put(6251,1364){\makebox(0,0)[lb]{\smash{\fontsize{22}{26.4}\usefont{T1}{ptm}{m}{n}{\color[rgb]{0,0,0}$\varepsilon$}%
}}}
\put(12301,1439){\makebox(0,0)[lb]{\smash{\fontsize{22}{26.4}\usefont{T1}{ptm}{m}{n}{\color[rgb]{0,0,0}$V^{h,y}$}%
}}}
\put(226,2489){\makebox(0,0)[lb]{\smash{\fontsize{22}{26.4}\usefont{T1}{ptm}{m}{n}{\color[rgb]{0,0,0}$1$}%
}}}
\end{picture}%
}
\caption{\small  Left: the function $\theta_\ve$ in (\ref{tdef}).   Center and right: the
corresponding functions $\ve Z$ and $V^{h,y}$, considered at (\ref{Zeq})--(\ref{Vhy}). 
}
\label{f:df39}
\end{center}
\end{figure}

In the following, for a fixed $\ve>0$,  to keep track of the dependence on $h,y$, we write
\bel{Vhy} V^{h,y}(x)~\doteq~\ve  h \cdot \int_{-1}^{\frac{x-y}{h}}  Z(s) \, ds.
\eeq
The next lemmas collect some basic properties of these functions.

\begin{lemma} \label{l:41} The function $V^{h,y}:[y-h, y+h]\mapsto\R$  
defined at (\ref{Vhy}) provides a solution to
\bel{V4} \theta'_\ve(V'(x)) V''(x)~=~-\frac{1}{ h}\,,\qquad\qquad V(y-h)~=~V(y+h)~=~0.\eeq
Moreover, for all $x\in [y-h,\,y+h]$ one has 
\bel{Vprop}
\theta_\ve(V'(x))=\frac12 - \frac{x-y}{ 2h}, \quad
V'(y-h)=\ve, \quad V'(y+h)=-\ve, 
\quad 
V(x)\in[0,\ve h]. 
\eeq
\end{lemma}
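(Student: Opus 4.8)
The plan is to verify every assertion by a direct computation from the defining formula (\ref{Vhy}), using only the implicit relation (\ref{Zeq}) for $Z$, its consequences collected in (\ref{Zprop}), and the structural assumptions (\ref{tprop1})--(\ref{tprop2}) on $\theta$. First I would record that $Z$ is well behaved: since $\theta'(s)>0$ for $|s|<1$, the equation $\theta(Z(s))=(1-s)/2$ together with $Z(0)=0$ determines a unique $C^1$ function $Z$ on $(-1,1)$, strictly decreasing since $Z'=-1/(2\theta'(Z))<0$, with $Z(-1)=1$ and $Z(1)=-1$; near $s=\pm1$ the derivative $Z'$ may be unbounded, but $Z$ itself stays in $[-1,1]$, so $V^{h,y}$ in (\ref{Vhy}) is well defined and of class $C^1$ on $[y-h,y+h]$ (its second derivative may blow up at the two endpoints, which does no harm to the pointwise identity on the open interval).

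Next I would differentiate. By the fundamental theorem of calculus, $(V^{h,y})'(x)=\ve\,Z\!\bigl(\tfrac{x-y}{h}\bigr)$, and hence on $(y-h,y+h)$ one has $(V^{h,y})''(x)=\tfrac{\ve}{h}\,Z'\!\bigl(\tfrac{x-y}{h}\bigr)$. Since $\theta_\ve(s)=\theta(s/\ve)$ gives $\theta'_\ve(\ve w)=\theta'(w)/\ve$, substituting and invoking the last identity of (\ref{Zprop}), namely $\theta'(Z(s))Z'(s)=-\tfrac12$, makes the product $\theta'_\ve\bigl(V'(x)\bigr)\,V''(x)$ collapse to the constant on the right-hand side of (\ref{V4}). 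For the boundary conditions, $V^{h,y}(y-h)=0$ is immediate from (\ref{Vhy}); to get $V^{h,y}(y+h)=\ve h\int_{-1}^{1}Z(s)\,ds=0$ I would first establish the reflection symmetry $\theta(w)+\theta(-w)\equiv1$ --- its derivative $\theta'(w)-\theta'(-w)$ vanishes by (\ref{tprop2}) and its value at $w=0$ is $2\theta(0)=1$ --- which forces $Z$ to be odd, so $\int_{-1}^{1}Z=0$.

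Finally, the three identities in (\ref{Vprop}) are read off directly. The relation $\theta_\ve\bigl(V'(x)\bigr)=\theta\!\bigl(Z(\tfrac{x-y}{h})\bigr)=\tfrac12-\tfrac{x-y}{2h}$ is just (\ref{Zeq}); evaluating $V'(x)=\ve Z(\tfrac{x-y}{h})$ at $x=y\mp h$ and using $Z(-1)=1$, $Z(1)=-1$ gives $V'(y-h)=\ve$ and $V'(y+h)=-\ve$; and for the range, strict monotonicity of $Z$ with $Z(0)=0$ yields $Z\ge0$ on $[-1,0]$ and $Z\le0$ on $[0,1]$, so $r\mapsto\int_{-1}^{r}Z$ is nondecreasing on $[-1,0]$ and nonincreasing on $[0,1]$; hence $V^{h,y}$ attains its minimum $0$ at the two endpoints and its maximum at $x=y$, where $V^{h,y}(y)=\ve h\int_{-1}^{0}Z(s)\,ds\le\ve h$ because $0\le Z\le1$ on $[-1,0]$. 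I do not anticipate a genuine obstacle: the computation is routine, and the only step requiring an idea rather than a substitution is the compatibility $\int_{-1}^{1}Z=0$ of the two Dirichlet conditions, handled through the symmetry $\theta(w)+\theta(-w)\equiv1$ --- the same fact that makes the normalization $Z(0)=0$ in (\ref{Zeq}) consistent with $\theta(0)=\tfrac12$.
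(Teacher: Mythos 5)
Your verification is correct and follows exactly the route the paper intends --- the paper's own proof is the single line that all properties are straightforward consequences of the definitions, and you supply the missing details, in particular the oddness of $Z$ (via $\theta(w)+\theta(-w)\equiv 1$, from $\theta'(s)=\theta'(-s)$ and $\theta(0)=\tfrac12$) needed for $V^{h,y}(y+h)=0$. One caveat: the substitution gives $\theta'_\ve\bigl(V'(x)\bigr)V''(x)=\tfrac1h\,\theta'(Z)Z'=-\tfrac{1}{2h}$, not the $-\tfrac1h$ printed on the right-hand side of (\ref{V4}); that factor is evidently a typo in the paper (the constant is $-\tfrac{1}{2h}$ in (\ref{V2}) and (\ref{thw}), and differentiating the first identity of (\ref{Vprop}) gives the same), so you should state the computed constant explicitly rather than asserting that the product ``collapses to the right-hand side of (\ref{V4})''.
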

Indeed, all the above properties are straightforward consequences of the definitions.

\begin{lemma} \label{l:56} For any $h>0$ and $y\in \R$, the following estimates hold.
\bel{Vxx} \int_{y-h}^{y+h} \left| \frac{\partial^2}{\partial x^2}  V^{h,y}(x)\right| dx~=~2\ve,\eeq
\bel{Vy} \int_{y-h}^{y+h} \left|\frac{\partial}{\partial y} V^{h,y}(x)\right|dx
~\leq ~2\ve h,
\eeq
\bel{Vh}  \int_{y-h}^{y+h} \left|\frac{\partial}{\partial h} V^{h,y}(x)\right| dx~\leq ~ 6\ve h.
\eeq
\end{lemma}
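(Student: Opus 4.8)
The plan is to reduce all three estimates to two elementary facts about the auxiliary profile $Z$ from (\ref{Zeq})--(\ref{Zprop}), after which each bound follows from a single change of variables $s = (x-y)/h$. First I would differentiate (\ref{Vhy}): this gives $\partial_x V^{h,y}(x) = \ve\, Z((x-y)/h)$ and hence $\partial_{xx} V^{h,y}(x) = (\ve/h)\, Z'((x-y)/h)$. Next I would record the two properties of $Z$ that do all the work. From the identity $\theta'(Z(s))\, Z'(s) = -1/2$ in (\ref{Zprop}), together with $\theta'(Z(s)) > 0$ for $s \in (-1,1)$ (which holds by (\ref{tprop2}), since $Z(s) \in (-1,1)$ there), we get $Z'(s) < 0$ on $(-1,1)$, so $Z$ is strictly decreasing on $[-1,1]$; combined with $Z(-1) = 1$ and $Z(1) = -1$ this yields $|Z(s)| \le 1$ for all $s \in [-1,1]$ and $\int_{-1}^1 |Z'(s)|\, ds = Z(-1) - Z(1) = 2$.

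With these in hand, (\ref{Vxx}) is immediate: substituting $s = (x-y)/h$ turns $\int_{y-h}^{y+h} |\partial_{xx} V^{h,y}(x)|\, dx = (\ve/h) \int_{y-h}^{y+h} |Z'((x-y)/h)|\, dx$ into $\ve \int_{-1}^1 |Z'(s)|\, ds = 2\ve$. For (\ref{Vy}), differentiating (\ref{Vhy}) in $y$ gives $\partial_y V^{h,y}(x) = -\ve\, Z((x-y)/h)$, and the same substitution produces $\ve h \int_{-1}^1 |Z(s)|\, ds \le 2\ve h$, using $|Z| \le 1$.

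The estimate (\ref{Vh}) is the only one needing slight care, since differentiating (\ref{Vhy}) in $h$ produces two terms, one from the prefactor $\ve h$ and one from the upper limit of integration: $\partial_h V^{h,y}(x) = \ve \int_{-1}^{(x-y)/h} Z(s)\, ds - (\ve (x-y)/h)\, Z((x-y)/h)$. Writing $\xi = (x-y)/h \in [-1,1]$, I would bound the first term by $\ve \int_{-1}^1 |Z(s)|\, ds \le 2\ve$ and the second by $\ve |\xi|\, |Z(\xi)| \le \ve$, giving the pointwise bound $|\partial_h V^{h,y}(x)| \le 3\ve$ on $[y-h, y+h]$; integrating over this interval of length $2h$ then yields $\int_{y-h}^{y+h} |\partial_h V^{h,y}(x)|\, dx \le 6\ve h$. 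The main (indeed only) obstacle is keeping careful track of the two contributions to $\partial_h V^{h,y}$ arising from the $h$-dependence of both the prefactor and the endpoint in (\ref{Vhy}); everything else is bookkeeping. Since all three arguments are short and self-contained, I would present them consecutively as a single proof.
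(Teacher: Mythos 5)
Your proposal is correct and follows essentially the same route as the paper: compute the three partial derivatives of $V^{h,y}$ from (\ref{Vhy}), use $|Z|\le 1$ together with the monotonicity of $Z$ (equivalently of $V'=\ve Z$, with endpoint values $\pm\ve$) for the exact identity (\ref{Vxx}), and bound the two contributions to $\partial_h V^{h,y}$ by $2\ve$ and $\ve$ before integrating over the interval of length $2h$. The only cosmetic difference is that you obtain (\ref{Vxx}) via the substitution $s=(x-y)/h$ and $\int_{-1}^1|Z'|\,ds = Z(-1)-Z(1)=2$, whereas the paper reads it off directly as the total variation of $V'$; these are the same computation.
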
 

\begin{proof}
{\bf 1.} 
The identity (\ref{Vxx})  follows from the middle two equalities in (\ref{Vprop}) and the fact that 
the derivative $V'=\ve Z$ is monotone decreasing. 

{\bf 2.}
Computing  the partial derivative of~\eqref{Vhy} and observing that $|Z|\le 1$, we obtain
$$ 
\left| \frac{\partial }{\partial y} V^{h,y}(x)\right| ~ = ~\left| \frac{\ve}{h} Z\Big(\frac{x-y}{ h}\Big)h \right| ~ \le~ \ve. 
$$
This yields~\eqref{Vy}.

{\bf 3.} 
Toward a proof of  (\ref{Vh}), again we perform a straightforward differentiation of (\ref{Vhy}):
$$
\left|\frac{\partial }{\partial h} V^{h,y}(x)\right| ~\le~ \left|\ve h \cdot \frac{x-y}{h^2} \cdot  Z((x-y)/h) 
\right|
+ \left| \int_{-1}^{\frac{x-y}{h}}\ve  Z(s)\, ds\right|  ~\le~ \ve + 2 \ve ~= ~3\ve.
$$ 
Here we used the fact $\bigl|(x-y)/h\bigr|\le 1$. 
Then~\eqref{Vh} follows. 
\end{proof}

\section{Construction of approximate viscous solutions}
\label{sec:6}
\setcounter{equation}{0}

Consider a piecewise constant, spatially periodic initial data  $\bar u\in \L^1_{per}(\R)$, and let $\ve_0>0$ be given. 
The goal of this section is to construct an approximate solution $w$ which satisfies the conclusion of Proposition~\ref{p:41}. 
The construction will be achieved in several steps.

{\bf 1.}
 For $\nu\geq 1$, denote by   $S^\nu:\L^1_{per}\times \R_+\mapsto \L^1_{per}$ the contractive semigroup
constructed in Section~\ref{sec:2}, replacing $f,g$ with the piecewise affine functions 
$f_\nu, g_\nu$.
Let
$t\mapsto u_\nu(t,\cdot)\doteq S^\nu_t\bar u$ be the corresponding front tracking solution.
As shown in Fig.~\ref{f:df43}, there will be a finite number (decreasing in time) of intervals
where $u_\nu(t,\cdot)$ attains a local maximum or minimum.   On each connected component of the complement of these
intervals, the flux is either equal to $f_\nu$ or to $g_\nu$. 

According to Proposition~\ref{p:22}, by choosing $\nu\geq 1$ large enough, 
one obtains the estimate (\ref{SSnu}).  In addition, we can also achieve the bound
\bel{nuc}
\Big(\|f_\nu-f\|_{W^{1,\infty}}+ \|g_\nu-g\|_{W^{1,\infty}}\Big) \cdot \bigl(\ov N+\TV \{\bar u\}
\bigr)~\leq~\ve_0\,,\eeq
where $\ov N$ is the number of intervals where $\bar u$ attains a local maximum or a local minimum (over one period).  

{\bf 2.} Consider the set of times 
\bel{restt}0\,=\,t_0\,<\,t_1\,<\,\cdots \,< \,t_N\,=\,T \eeq 
which includes:
\begin{itemize} 
\item all times $\tau_i$ where two fronts in $u_\nu$ interact,
\item all times $\tau_j$ where the strength $\bigl|\sigma_i(t)\bigr|$ of a jump in $u_\nu(t,\cdot)$
becomes smaller than $\ve_0$.
\end{itemize}
We remark that, in view of (\ref{26}), jumps located at an endpoint of an interval 
$\bigl[ x_{\alpha}(t), x_{\alpha+1}(t)\bigr]$, where $u_\nu(t,\cdot)$ attains a local max or min,
have size that decreases in time.   It is quite possible that one of these jumps has strength
$$\bega{l}\bigl|\sigma_i(t)\bigr|~\geq \ve_0\quad\hbox{for}~~t\leq \tau_j\,,\\[1mm]
\bigl|\sigma_i(t)\bigr|~<\ve_0\quad\hbox{for}~~t>\tau_j\,.\enda $$
%

{\bf 3.} Given the constants $\ve,\delta>0$, we now describe how to construct  an approximate solution
to the viscous equation (\ref{30})
on a time interval $\,]t_{\ell-1}, t_\ell]$
where the front tracking solution retains a constant structure (same number of fronts,
same number of shocks of size $\geq\ve_0$).
We start by constructing three types of approximate solutions, on overlapping domains.
Recalling that all our solutions are spatially periodic, 
for $t\in \,]t_{\ell-1}, t_\ell[\,$ let 
$$x_1(t)~<~x_2(t)~<~\cdots~<~x_m(t)$$
be the set of points (within one period in space) which includes
all endpoints of intervals where $u_\nu(t,\cdot) $ has a local maximum or minimum, and all points  where $u_\nu(t,\cdot)$ has a jump of size $\geq \ve_0$.
\v

TYPE 1: Let $\bigl[ x_{k-1}(t), x_k(t)\bigr]$ be an interval where $u_\nu(t,\cdot)$ attains a local maximum, with constant value $u_\nu(t,x) = \Hat u_k(t)$.
Define 
\bel{T1a}
y_k\doteq \frac{x_{k-1}(t)+ x_k(t)}{ 2}\,,
\qquad  h_k\doteq \frac{x_{k}(t)- x_{k-1}(t)}{ 2}\,,
\qquad I_k \doteq \bigl[x_{k-1}(t), x_k(t) \bigr]\,.
\eeq
Here and below $y_k$ is the mid point of $I_k$. 
Recalling (\ref{Vhy}), we then define
\bel{wsha1}w^\sharp_k(t,x)~\doteq~ \Hat u_k(t)+V^{h_k,y_k}(x) ,\qquad\qquad
 x\in I_k\,. \eeq
In the case of a local minimum,  the definition (\ref{wsha1}) is replaced by
\bel{wsha2}w^\sharp_k(t,x)~\doteq~\Hat u_k(t)-V^{h_k,y_k}(x)  ,\qquad\qquad
 x\in I_k\,.\eeq

TYPE 2: This type includes neighborhoods of a big jump.     Let $y_k(t)$ be a point where $u_\nu(t,\cdot)$ has a large Liu-admissible
shock of size $\geq  \ve_0$,
say with left and right states $u_k^-(t), u_k^+(t)$. 
We then define 
\bel{wdag}
w^\dag(t,x)~\doteq~\Tilde U\bigl(t,x- y_k(t)\bigr),\qquad\qquad 
{ x\in I_k \doteq  \bigl[y_k(t)-2\delta^{1/3}, y_k(t)+2\delta^{1/3}\bigr]},\eeq
where $\Tilde U$ is the approximate viscous  traveling profile constructed at (\ref{mollU}), with left and right 
values
$u_k^-(t), u_k^+(t)$.  Note that these values may change in time, if $y_k(t)$ is the endpoint of an interval where $u_\nu(t,\cdot)$ attains a local max or min.

TYPE 3. We now consider the open set 
$$\Gamma~\doteq~\R\setminus\left( \bigcup_{\alpha\in \M} I_\alpha \cup\bigcup_{\ell\in \J} \{y_\ell\}\right),$$
where the first sum includes all closed intervals $I_\alpha$ where $u_\nu(t,\cdot)$ attains a local max or min,
and the second sum is over all points $y_\ell$ where $u_\nu(t,\cdot)$ has a jump of size $\geq \ve_0$.

Therefore, the set $\Gamma$  is the union of a set of disjoint open intervals which are connected 
 components of $\Gamma$. 
Consider such an open  interval $\,\bigl] x_{k-1}(t), x_k(t)\bigr[\,$, and define 
\bel{T3a}
y_k \doteq \frac12(x_{k-1}(t) + x_k(t) ), \qquad 
I_k \doteq \bigl[x_{k-1}(t)-\delta^{1/3} ,\,  x_k(t)+\delta^{1/3}\bigr]\,.
\eeq
On this interval $I_k$ we define a function $w_k^*(t,x)$ as follows.
We first define the boundary values
$$u^-_k(t)~\doteq~\left\{ \bega{rl} u_\nu\bigl(t, x_k(t)-\bigr)\qquad &\hbox{if $x_k(t)$ is the location of a jump of size $\geq\ve_0$,}\\[2mm]
u_\nu\bigl(t, x_k(t)+\bigr)\qquad &\hbox{if $x_k(t)$ is the location of a jump of size $<\ve_0$,}
\enda\right.$$
$$u^+_{k-1}(t)~\doteq~\left\{ \bega{rl} u_\nu\bigl(t, x_{k-1}(t)+\bigr)\qquad &\hbox{if $x_{k-1}(t)$ is the location of a jump of size $\geq\ve_0$,}\\[2mm]
u_\nu\bigl(t, x_{k-1}(t)-\bigr)\qquad &\hbox{if $x_{k-1}(t)$ is the location of a jump of size $<\ve_0$.}
\enda\right.$$
We then introduce the  truncated function
$$u^k_\nu(t,x)~\doteq~\left\{\bega{cl} u_\nu(t,x)\quad&\hbox{if} \quad x_{k-1}(t)<x< x_k(t),\\[2mm]
u_{k-1}^+\quad &\hbox{if} \quad x\leq x_{k-1}(t),\\[2mm]
u_k^-\quad&\hbox{if} \quad x\geq x_{k}(t).\enda\right.$$
Finally, we take a mollification.  As usual, this is done by 
introducing the standard mollifier
\bel{moll}\phi(x)~\doteq~\left\{\bega{cl} C_0\, \exp\left\{ \frac{1}{ x^2-1}\right\}\quad&\hbox{if} \quad |x|<1,\\[2mm]
0\quad&\hbox{if} \quad |x|<1,\enda\right.\qquad\qquad \phi_\delta(x)\doteq  \frac{1}{\delta}\, \phi\left(\frac{x}{\delta}\right),\eeq
where the constant $C_0$ is chosen so that $\int\phi(x)dx=1$.
We then set
\bel{wmoll}
w_k^*(t,x)~\doteq
~\int \phi_{\delta^{1/3}}(x-y) u^k_\nu(t,y)\, dy,\qquad x\in I_k.\eeq

\begin{figure}[htbp]
\begin{center}
\resizebox{0.9\textwidth}{!}{
\begin{picture}(0,0)%
\includegraphics{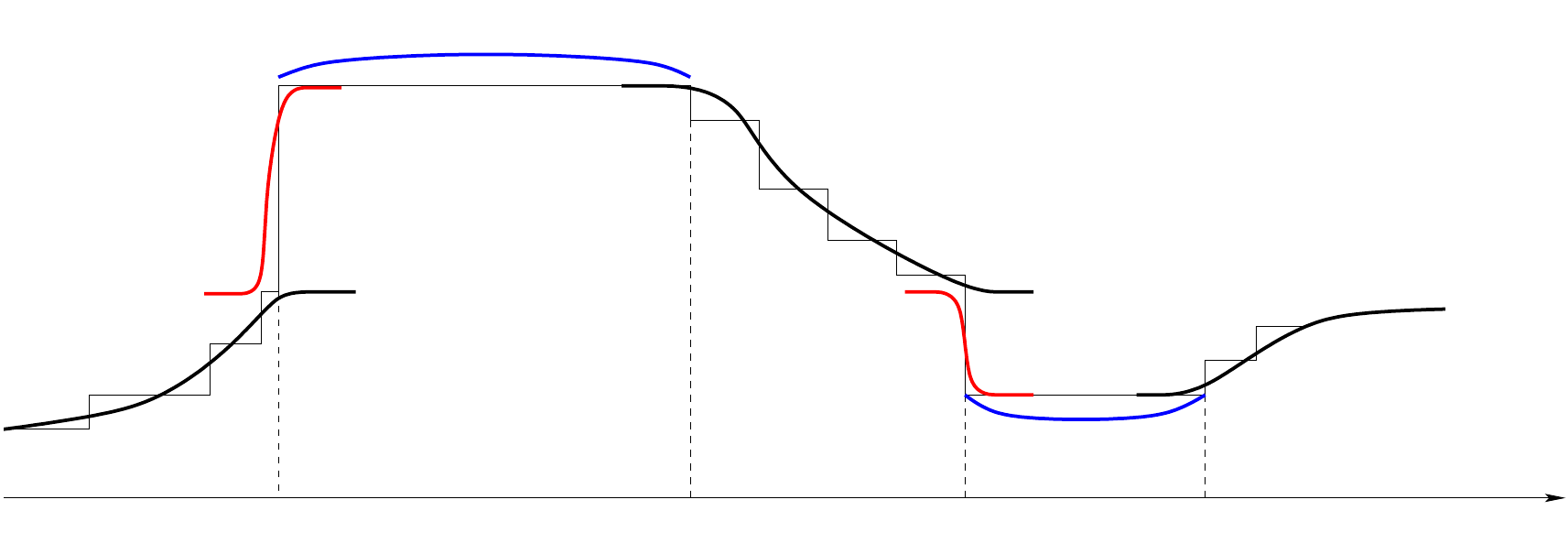}%
\end{picture}%
\setlength{\unitlength}{3947sp}%
\begin{picture}(13695,4825)(-2432,-244)
\put(1201,4289){\makebox(0,0)[lb]{\smash{\fontsize{22}{26.4}\usefont{T1}{ptm}{m}{n}{\color[rgb]{0,0,1}$w^\sharp_2$}%
}}}
\put(5826,-136){\makebox(0,0)[lb]{\smash{\fontsize{22}{26.4}\usefont{T1}{ptm}{m}{n}{\color[rgb]{0,0,0}$x_3$}%
}}}
\put(7926,-136){\makebox(0,0)[lb]{\smash{\fontsize{22}{26.4}\usefont{T1}{ptm}{m}{n}{\color[rgb]{0,0,0}$x_4$}%
}}}
\put(-174,-136){\makebox(0,0)[lb]{\smash{\fontsize{22}{26.4}\usefont{T1}{ptm}{m}{n}{\color[rgb]{0,0,0}$x_1$}%
}}}
\put(-1949,1439){\makebox(0,0)[lb]{\smash{\fontsize{22}{26.4}\usefont{T1}{ptm}{m}{n}{\color[rgb]{0,0,0}$w^*_1$}%
}}}
\put(6601,539){\makebox(0,0)[lb]{\smash{\fontsize{22}{26.4}\usefont{T1}{ptm}{m}{n}{\color[rgb]{0,0,1}$w^\sharp_4$}%
}}}
\put(4576,3164){\makebox(0,0)[lb]{\smash{\fontsize{22}{26.4}\usefont{T1}{ptm}{m}{n}{\color[rgb]{0,0,0}$w^*_3$}%
}}}
\put(9376,2114){\makebox(0,0)[lb]{\smash{\fontsize{22}{26.4}\usefont{T1}{ptm}{m}{n}{\color[rgb]{0,0,0}$w^*_5$}%
}}}
\put( 76,2864){\makebox(0,0)[lb]{\smash{\fontsize{22}{26.4}\usefont{T1}{ptm}{m}{n}{\color[rgb]{1,0,0}$w^\dagger_1$}%
}}}
\put(5401,1439){\makebox(0,0)[lb]{\smash{\fontsize{22}{26.4}\usefont{T1}{ptm}{m}{n}{\color[rgb]{1,0,0}$w^\dagger_3$}%
}}}
\put(3526,-136){\makebox(0,0)[lb]{\smash{\fontsize{22}{26.4}\usefont{T1}{ptm}{m}{n}{\color[rgb]{0,0,0}$x_2$}%
}}}
\end{picture}%
}
\caption{\small  The different types of approximations $w_k^*, w^\dagger_k, w_k^\sharp$
introduced at (\ref{wsha1})--(\ref{wmoll}),  on various subdomains.}
\label{f:df46}
\end{center}
\end{figure}

{\bf 4.} As shown in Fig.~\ref{f:df46}, the approximate viscous solutions $w^\sharp_k, w^\dagger_k, w_k^*$
are defined on overlapping domains.  To construct the $\C^{1,1}$ function $w(t,\cdot)$, we need to
glue together the above functions, so that the derivative $w_x(t,\cdot)$ is continuous at junctions.

Toward this goal, we observe that, if $y_k$ is a point where $u_\nu(t,\cdot)$
has a jump of size $\geq \ve_0$, then   we have the following matching conditions:
\bel{match}
\left\{    \bega{rl} u_\nu (y_k-) &=~w_k^\dagger(y_k-2\delta^{1/3}) 
~=~w^*_{k}(y_k+\delta^{1/3})
~=~w_{k}^\sharp (y_k) ,
\\[2mm]
u_\nu (y_k+) &=~w_k^\dagger(y_k+2\delta^{1/3}) 
 ~=~w^*_{k+1}(y_k-\delta^{1/3}) 
~=~w_{k+1}^\sharp (y_{k+1}),
\enda\right.
\eeq
whenever the above functions are defined.  
Similar identities hold at the common endpoints of intervals of Type 1 and 3.
For example, if $\bigl[ x_{k-1}(t), x_k(t)\bigr]$ is an interval of Type 1 (where
$u_\nu$ attains a local max or min) and  
$\,\bigl] x_{k}(t), x_{k+1}(t)\bigr[\,$ is an interval of Type 3 (where $u_\nu$ is monotone
but without jumps of size $\geq \ve_0$), then 
$$w^\sharp(x_k)~=~w^*\bigl(x_{k+1} - \delta^{1/3}\bigr).$$

Moreover, at all matching points, the functions $w_k^*$ and $w^\dagger_k$
have derivative zero, while $w^\sharp_k$ has derivative $\pm\ve$.

\begin{figure}[htbp]
\begin{center}
\resizebox{1\textwidth}{!}{
\begin{picture}(0,0)%
\includegraphics{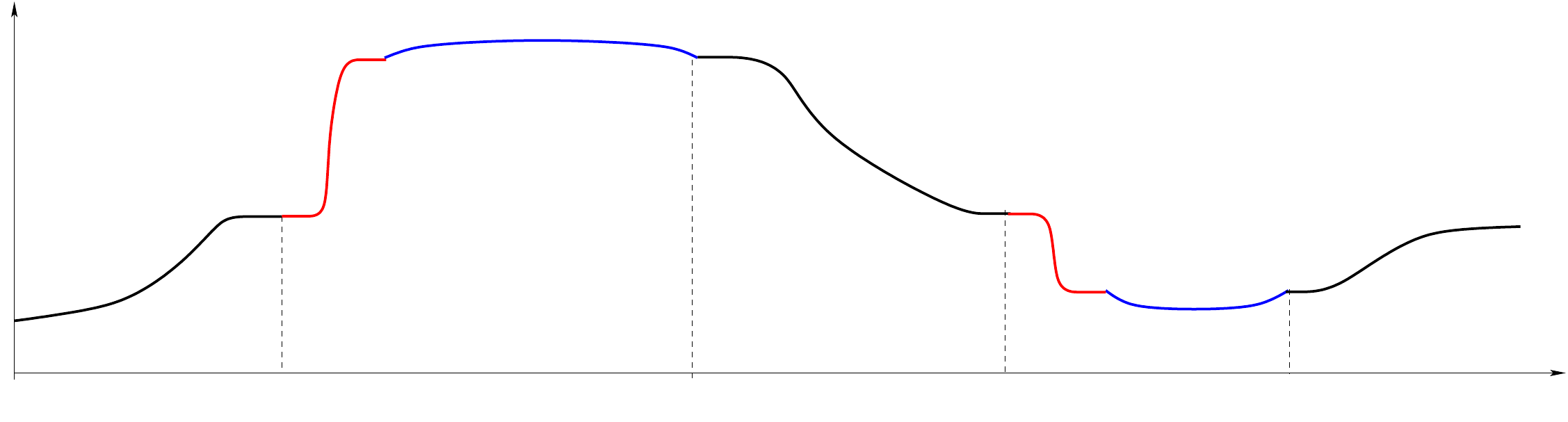}%
\end{picture}%
\setlength{\unitlength}{3947sp}%
\begin{picture}(18027,4845)(-2564,-319)
\put(9601,1514){\makebox(0,0)[lb]{\smash{\fontsize{22}{26.4}\usefont{T1}{ptm}{m}{n}{\color[rgb]{1,0,0}$w_3^\dagger$}%
}}}
\put(12076,-211){\makebox(0,0)[lb]{\smash{\fontsize{22}{26.4}\usefont{T1}{ptm}{m}{n}{\color[rgb]{0,0,0}$x_4'$}%
}}}
\put(5226,-211){\makebox(0,0)[lb]{\smash{\fontsize{22}{26.4}\usefont{T1}{ptm}{m}{n}{\color[rgb]{0,0,0}$x'_2$}%
}}}
\put(8801,-211){\makebox(0,0)[lb]{\smash{\fontsize{22}{26.4}\usefont{T1}{ptm}{m}{n}{\color[rgb]{0,0,0}$x_3'$}%
}}}
\put(476,-211){\makebox(0,0)[lb]{\smash{\fontsize{22}{26.4}\usefont{T1}{ptm}{m}{n}{\color[rgb]{0,0,0}$x_1'$}%
}}}
\put(14926,-116){\makebox(0,0)[lb]{\smash{\fontsize{22}{26.4}\usefont{T1}{ptm}{m}{n}{\color[rgb]{0,0,0}$x$}%
}}}
\put(10576,614){\makebox(0,0)[lb]{\smash{\fontsize{22}{26.4}\usefont{T1}{ptm}{m}{n}{\color[rgb]{0,0,1}$w_4^\sharp$}%
}}}
\put(13576,2039){\makebox(0,0)[lb]{\smash{\fontsize{22}{26.4}\usefont{T1}{ptm}{m}{n}{\color[rgb]{0,0,0}$w_5^*$}%
}}}
\put(7276,2864){\makebox(0,0)[lb]{\smash{\fontsize{22}{26.4}\usefont{T1}{ptm}{m}{n}{\color[rgb]{0,0,0}$w_3^*$}%
}}}
\put(1276,2639){\makebox(0,0)[lb]{\smash{\fontsize{22}{26.4}\usefont{T1}{ptm}{m}{n}{\color[rgb]{1,0,0}$w_1^\dagger$}%
}}}
\put(3076,4214){\makebox(0,0)[lb]{\smash{\fontsize{22}{26.4}\usefont{T1}{ptm}{m}{n}{\color[rgb]{0,0,1}$w_2^\sharp$}%
}}}
\put(-1724,1289){\makebox(0,0)[lb]{\smash{\fontsize{22}{26.4}\usefont{T1}{ptm}{m}{n}{\color[rgb]{0,0,0}$w_1^*$}%
}}}
\put(-2549,-211){\makebox(0,0)[lb]{\smash{\fontsize{22}{26.4}\usefont{T1}{ptm}{m}{n}{\color[rgb]{0,0,0}$0$}%
}}}
\end{picture}%
}
\caption{\small  By shifting the approximations $w_k, w^\dagger_k, w_k^\sharp$ shown in Fig.~\ref{f:df46},
thanks to the matching conditions (\ref{match}) we obtain a continuous function $W$.
}
\label{f:df47}
\end{center}
\end{figure}

To combine these functions, we first construct an auxiliary function $W$ 
defined an a somewhat larger interval to accommodate for the overlapping regions.  
As shown in Fig.~\ref{f:df47}, we shift the graphs of all functions $w_k^\sharp, w_k^\dagger, w_k^*$
by integer multiples of $\delta^{1/3}$, so that the intervals where they are defined no longer overlap, and the matching conditions (\ref{match}) remain valid at the new endpoints 
$x_k'(t)$.

We define a precise way to make these shifts.
 Consider the overlapping intervals $I_k$ defined for TYPE 1, 2 and 3. 
We place them in the order of their mid points $y_k$, i.e.
$$I_0~\prec~I_1~\prec~I_2~\prec~\cdots~\prec~I_N\qquad\hbox{iff}\qquad y_0<y_1<y_2<\cdots<y_N\,.$$
Call $\xi_0$ the  left endpoint of the interval $I_0$, and set
\bel{elli}\ell_i\,\doteq\,\meas(I_i),\qquad\ell\doteq \sum_{i=0}^N \ell_i = 1 + m\delta^{1/3} \,,
\eeq
for some integer $m\ge 1$. 
For each $k=0,1,\ldots, N$, let  $W_k: I_k\mapsto \R$ be a function defined on $I_k$.
A function $W:[\xi_0, ~\xi_0+\ell]\mapsto \R$ is then defined as follows.
\bel{shiftW} 
\qquad W(x)~=~W_k\left( x-\xi_0-{ \sum_{j=1}^k \ell_j}\right)
\qquad \hbox{whenever} \quad
 \sum_{j=1}^k \ell_j ~\leq~x-\xi_0~<~{ \sum_{j=1}^{k+1} \ell_j} \,.
 \eeq
For each time $t$, applying the above construction in connection with the functions $w_k^\sharp, w_k^\dagger, w_k^*$,
defined on their respective intervals as in (\ref{wsha1}--(\ref{wmoll}),
we obtain a function $W(t,\cdot)$ defined on the interval $\bigl[\xi_0(t)\,,~ \xi_0(t)+ \ell \bigr]$.

{\bf 5.}
As shown in Fig.~\ref{f:df47}, three issues remain to be addressed:
\begi
\item[(i)]  The functions $w_k^\dagger, w_k^*$ have zero derivative at the endpoints
of the intervals where they are defined, while the functions $ w_k^\sharp$ have derivative 
$\pm\ve$.
For this reason the derivative $W_x(t,\cdot)$ can be discontinuous at some of the junction points $x_j'$. This must be avoided.
\item[(ii)] On the regions where $W= w_k^\dagger$ or $W=w_k^*$, we would like to have
a derivative $W_x(t,x)$  either $\geq \ve$ or $\leq -\ve$.   This will imply
$\theta_\ve(W_x)\in \{0,1\}$, $\theta_\ve'(W_x)=0$, greatly simplifying the estimate 
(\ref{wer}). 
\item[(iii)] The function $W(t,\cdot) $ can be periodically extended to a spatially periodic function 
with period $\ell >1$, while we would like it to have period 1.
\endi

To cope the above difficulties,  we partition the domain of $W(t,\cdot) $ as
$$\bigl[\xi_0\,,~\xi_0+ \ell 
\bigr]~=~\Omega_+(t)\cup \Omega_-(t)\cup \Omega_0(t),$$
where $\Omega_+$ corresponds to the intervals where $W$ is increasing, $\Omega_-$
to the intervals where $W$ is decreasing, and $\Omega_0$ to the intervals where $W$
attains a local max or min (so that $W = w_k^\sharp$ for some $k$).
We then define
\bel{TWdef}\Tilde W(t,x)~\doteq~W(t,x)  + \ve \cdot \meas\Big(\Omega_+(t)\cap [\xi_0,x]\Big) - \
\ve \cdot \meas\Big(\Omega_-(t)\cap [\xi_0,x]\Big).\eeq
Note that this modification solves both issues (i) and (ii) at the same time.   Indeed, (\ref{TWdef}) implies that 
$\Tilde W_x\leq -\ve$ on the intervals where
$\Tilde W$ is decreasing, while 
$\Tilde W_x\geq \ve$ on intervals where $\Tilde W$ is increasing.
Moreover, at the endpoints of the intervals where $\Tilde W$ attains a local max or min,
its derivative exists and is continuous, since $\Tilde W_x =\ve$  or $\Tilde W_x =-\ve$.

Unfortunately, the definition (\ref{TWdef}) 
creates another problem.  Namely, 
if $\meas\bigl(\Omega_+(t)\bigr)\not= \meas\bigl(\Omega_-(t)\bigr)$, the identity
\bel{TWper}\Tilde W(t, \,\xi_0 + \ell 
)~=~\Tilde W(\xi_0),\eeq
will fail, 
hence $\Tilde W(t,\cdot)$ cannot be extended to a continuous periodic function.

To fix ideas, assume that the excess length
$$E(t)~\doteq~\meas\bigl(\Omega_+(t)\bigr)- \meas\bigl(\Omega_-(t)\bigr)~>~0,$$
the case $E(t)<0$ being entirely similar.  

To resolve the problem, we consider a slightly modified 
(no longer periodic) piecewise constant  function 
$v_\nu(t,\cdot)$, obtained from $u_\nu(t,\cdot)$ by slightly increasing each of its 
downward jumps.   Repeating all steps of the above construction in connection 
with $v_\nu(t,\cdot)$, we shall obtain a function $\Tilde W$ which  satisfies the periodicity requirement.

More precisely, let 
$$\xi_0(t)~<~ x_1(t)~<~x_2(t)~<~~\cdots~~<~ x_N(t) ~<~ \xi_0(t)+1$$
be the points where the jumps in $u_\nu(t)$ are located, and 
call 
$$\sigma_j(t)~\doteq~u_\nu\bigl( t, x_j(t)+\bigr) - u_\nu\bigl( t, x_j(t)-\bigr) $$
the jump at $x_j(t)$. 
By construction, the total variation of $u_\nu$ over one period is
$$\TV\Big\{ u_\nu(t,\cdot)\,;~\bigl[\xi_0(t), \xi_0(t)+1\bigr[\,\Big\} =2\mu(t),
\qquad
\mu(t)\doteq \sum_{\sigma_j>0} \sigma_j(t) =-\sum_{\sigma_j<0} \sigma_j(t).
$$ 
For $x\in \bigl[ \xi_0(t), \xi_0(t)+1\bigr]$, we now define
the modified function $v_\nu(t,\cdot)$ by setting
\bel{vnu}
v_\nu(t, x)~\doteq~u_\nu\bigl(t, \xi_0(t)+\bigr) + \sum_{\xi_0(t)<x_j(t)\leq x,~
\sigma_j>0}\sigma_j(t) +  \sum_{\xi_0(t)<x_j(t)\leq x,~
\sigma_j<0}\frac{\ve\, E(t)}{ \mu(t)} \sigma_j(t) \,.\eeq
In other words, $v_\nu$ is obtained from $u_\nu$ by multiplying the size of each downward jump
by the same factor $\ve E(t)/\mu(t)$.   
Thus, the  periodicity relation (\ref{TWper}) is now satisfied. 


Finally, to achieve a spatially periodic function with period 1, it suffices to define
\bel{wdefinal}
w(t,x)~\doteq~\Tilde W\left(t,  x/\ell
\right).\eeq
Since $\ell>1$, this rescaling increases the absolute value of all derivatives.
In particular, we have the implications
\bel{derbig}\bega{c}\ds\Tilde W_x\left(t, x/\ell
\right)\leq-\ve\qquad\implies\quad w_x(t,x)\leq -\ve,\\[2mm]\ds
\Tilde W_x\left(t, x/\ell
\right)\geq\ve\qquad\implies\quad w_x(t,x)\geq \ve.
\enda\eeq

\begin{remark} {\rm Approximate viscous solutions to a hyperbolic system of
conservation laws, close to a front tracking approximation $u_\nu$,
were first  constructed in \cite{BY} using a similar technique.
The main difference is that in \cite{BY} the viscous shock profiles $w_k^\dagger$
(tracing large shocks) were patched together with the mollified profiles $w_k^*$
simply by a smooth interpolation.

In the present setting, we need to  keep track of the intervals where 
the gradient satisfies $|w_x|\geq \ve$, hence $\theta_\ve '(w_x)=0$,
and  the remaining intervals around local maxima or minima, where
 $\theta_\ve'(w_x)>0$.   A more careful construction is thus needed.
}
\end{remark}

\section{Error estimates}
\label{sec:7}
\setcounter{equation}{0}

In this section we prove that the approximations constructed in Section~\ref{sec:6}
satisfy the conclusion of  Proposition~\ref{p:41}.

\begin{proof} (of Proposition~\ref{p:41})
Given $\ve_0>0$ and a piecewise constant initial data $\bar u\in \L^1_{per}(\R)$, 
by choosing $\nu\ge 1$ large enough, it is clear that \eqref{SSnu} is  satisfied.

Replacing $u_\nu(t,\cdot) = S^\nu_t \bar u$ with the piecewise constant function 
$v_\nu(t,\cdot)$ defined at (\ref{vnu}), for every $t$ the difference is estimated as
\bel{uvdif}
\int_0^1 \bigl| v_\nu(t,x) - u_\nu(t,x)\bigr|\, dx~=~\O(1)\cdot\ve.\eeq
This can be rendered arbitrarily small by choosing $\ve>0$ small enough. 
Starting with $v_\nu(t,\cdot)$ and following the various steps in the construction of $w(t,\cdot)$, described in Section~\ref{sec:6}, one readily checks that 
\bel{vwdif}
\int_0^1 \bigl| w(t,x) - v_\nu(t,x)\bigr|\, dx~=~\O(1)\cdot(\ve+\delta^{1/3}).\eeq
Again, this can be rendered arbitrarily small by choosing $\ve,\delta>0$ small enough. 
As a consequence, for all $t\in [0,T]$ we obtain the bound
\bel{wunudif}
\int_0^1 \bigl| w(t,x) - u_\nu(t,x)\bigr|\, dx~=~\O(1)\cdot(\ve+\delta^{1/3})~\leq~\ve_0\,.\eeq
This already  yields (\ref{wu1}).  

Next, let $0=t_0<t_1<\cdots<t_p\leq T$ be the times where some front interaction occurs 
in the front tracking solution $u_\nu$, or the size of one of the jumps becomes smaller than $\ve_0$.   These are the only times when the map $t\mapsto
w(t,\cdot)$ can be discontinuous, as a map with values in $\L^1_{per}(\R)$. 
  At each time $t_j$ where the map $t\mapsto w(t,\cdot)$ has a jump, by (\ref{wunudif}) it follows
\bel{jumpk}
\int_0^1 \bigl| w(t_j+,x) - w(t_j-,x)\bigr|\, dx~=~\O(1)\cdot(\ve+\delta^{1/3})\,.\eeq
We observe that the total  number of wave front interactions  in $u_\nu$  
depends on the initial condition $\bar u$, on $\ve_0$ and on the integer $\nu\geq 1$, but is independent of $\ve,\delta$.
By choosing $\ve,\delta>0$ small enough in the construction of $w$,  we  thus obtain (\ref{wu2}).
\v
It now remains to establish the bound (\ref{wer}), estimating by how much the function
$w$ fails to satisfy the parabolic equation  (\ref{3}) during time intervals where 
the map $t\mapsto w(t,\cdot)$ is continuous.

To begin, we observe that the presence of shifts as in (\ref{shiftW}), the 
$\ve$-perturbation  at (\ref{TWdef}) 
and the rescaling in (\ref{wdefinal}), all  contribute to the instantaneous error 
\bel{IE}\E(t)~\doteq~\int
\bigg| 
w_t +  \Big[\theta_\ve(w_x) f(w) + \bigl(1-\theta_\ve(w_x) \bigr)g(w)\Big]_x
-\delta w_{xx}\bigg|\, dx
\eeq
by an amount which approaches zero as $\ve, \delta\to 0$.  As usual, since we are dealing with periodic functions, it is understood that the integral ranges over one spatial period.

As an example, we give here a bound on the above integral relative to an interval  of Type~1, where
$u_\nu$ attains a local maximum.
To fix ideas, recalling (\ref{Vhy}) let 
$$w(x) ~=~V^{0,h}(x)~\doteq~\ve h\int_{-1}^{x/h} Z(s)\, ds\qquad\qquad x\in [-h,h].$$
Here $Z$ is the function introduced at (\ref{Zeq})-(\ref{Zprop}).   
By construction, $w$ satisfies the ODEs
\bel{thw}\theta'_\ve(w'(x)) w''(x) ~=~{-1\over 2h},
\qquad\qquad x\in [-h,h].\eeq
Next, for a given $\ell = 1+ m\delta^{1/3}$ as in (\ref{elli}),
consider the rescaled function
$$\Tilde w(x)~\doteq~w(\ell x),\qquad\qquad x\in \left[ -{h\over \ell}, ~{h\over \ell}\right].$$ 
We need to estimate by how much $\Tilde w$ fails to satisfy the equation (\ref{thw}).
In other words, we seek a bound on
\bel{errZ}\bega{rl} E&\ds\doteq~\int_{-h/\ell}^{h/\ell} \left| \theta'_\ve\bigl(\Tilde w'(x)\bigr) \Tilde w''(x) + {\ell\over 2h}
\right|\, dx\\[4mm]
&\ds =~{1\over \ell}\cdot  \int_{-h}^{h} \left| \theta'_\ve\bigl(\ell w'(\ell x)\bigr) \,\ell^2 w''(\ell x)
-  \theta'_\ve\bigl( w'(x)\bigr) w''(x) + {\ell-1\over 2h}
\right|\,dx \\[4mm]
&\leq  \ds ~{\ell-1\over \ell} + {1\over \ell}\cdot  \int_{-h}^{h} \Big| \theta'_\ve \bigl(\ell \ve  Z(\ell x)\bigr) \,\ell^2\ve Z'(\ell x)
-  \theta'_\ve\bigl( \ve Z(x)\bigr) \ve Z'(x)
\Big|\,dx \\[4mm]
&\leq  \ds ~{\ell-1\over \ell} + {1\over \ell}\cdot  \int_{-h}^{h} \Big| \theta' \bigl(\ell   Z(\ell x)\bigr) \,\ell^2 Z'(\ell x)
-  \theta'\bigl(  Z(x)\bigr)  Z'(x)
\Big|\,dx \,.
\enda
\eeq
As $\delta\to 0$, one has $\ell = 1+m \delta^{1/3}\to 1$. 
It is thus clear that the right hand side of (\ref{errZ}), which is independent of $\ve$, approaches zero.

The estimates relative to the intervals of Type 2 and 3,
are straightforward.  In all cases, the additional errors coming from the 
shifts, $\ve$-perturbations and rescalings at (\ref{shiftW})--(\ref{wdefinal}), approach zero as $\ve,\delta\to 0$.
\v
In the following, we focus on the error on the left hand side of (\ref{wer}) in the cases where
$w$ is replaced by $w_k^\sharp, w^\dagger_k$ or $w^*_k$, defined at (\ref{wsha1})-(\ref{wsha2}), 
at (\ref{wdag}), or at (\ref{wmoll}), respectively.
\v

{\bf 1 - Intervals where a local maximum or a local minimum is attained.}

The error over an interval  
$\bigl[x_k(t)\,,~ x_{k+1}(t)]\bigr]$ where $u_\nu$ attains a local max
or min can be estimated using Lemma~\ref{l:56}.
Namely,
\bel{ER1}\int_{x_k(t)}^{x_{k+1}(t)}
\bigg| 
w^\sharp_t +  \Big[\theta_\ve(w^\sharp_x) f(w^\sharp) + \bigl(1-\theta_\ve(w^\sharp_x) \bigr)g(w^\sharp)\Big]_x
-\delta w^\sharp_{xx}\bigg|\, dx~=~  \O(1)\cdot \ve .
\eeq
Indeed, by construction we have
\bel{ER4}\frac{d}{ dt}\Hat u_k(t) +  \Big[\theta_\ve(w^\sharp) f\bigl(\Hat u_k(t)\bigr) + \bigl(1-\theta_\ve(w^\sharp_x) \bigr)g\bigl(\Hat u_k(t)\bigr)\Big]_x~=~0.\eeq
The difference between (\ref{ER4}) and the integrand in (\ref{ER1}) is thus estimated by the sum of 
the following three terms:
 \bel{ER5}\int_{x_k(t)}^{x_{k+1}(t)}\delta \,|w^\sharp_{xx}|\,dx~=~\delta\,\Big|w^\sharp_x\bigl(t,x_{k+1}(t)\bigr) - w^\sharp_x \bigl(t, x_k(t)\bigr)\Big| ~=~2 \delta\ve\,,\eeq
\bel{ER6}\bega{l} \ds \int_{x_k(t)}^{x_{k+1}(t)}
\bigg| 
\Big[\theta_\ve(w^\sharp_x) f(w^\sharp) + \bigl(1-\theta_\ve(w^\sharp_x) \bigr)g(w^\sharp)\Big]_x
\\[4mm]
\qquad\qquad \ds-
 \Big[\theta_\ve(w^\sharp) f\bigl(\Hat u_k(t)\bigr) + \bigl(1-\theta_\ve(w^\sharp_x) \bigr)g\bigl(\Hat u_k(t)\bigr)\Big]_x \bigg| dx~=~\O(1)\cdot \ve. \enda\eeq
\bel{ER7} \int_{x_k(t)}^{x_{k+1}(t)} \left| \frac{\partial}{\partial t} V^{y(t), h(t)}(t,x)\right|\, dx
~=~\O(1)\cdot \ve\,.\eeq
This leads to \eqref{ER1}.
\v
 {\bf 2 - Intervals around big jumps of size $\geq  \ve_0$.}
 
 To fix ideas, consider an upward, Liu-admissible shock in $u_\nu$, of size $\geq \ve_0$, located at 
 the point $x_k(t)$.
 Consider $w^\dagger = \Tilde U$.
 If the left and right states $u^\pm(t) = u\bigl(t, x_k(t) \pm\bigr)$ remain constant in time, from (\ref{erb7}) it follows
 \bel{ER2}\int_{x_k(t)-\delta^{1/3}}^{x_{k}(t)+\delta^{1/3}}
\bigg| 
w^\dagger_t +  f(w^\dagger)_x
-\delta w^\dagger_{xx}\bigg|\, dx~=~\O(1)\cdot \ve_0\, |u_k^+-u_k^-|.\eeq
 If one or both states $u^-,u^+$ vary in time
 (this happens if the shock coincides with the endpoint of an interval where $u_\nu$
 attains a local max or min), this produces an additional error that must be estimated.
 
 We recall that, by (\ref{HUC}), the viscous profile $\Tilde U$ depends
 Lipschitz continuously on the left and right states $u^-, u^+$, in the $\L^1$ distance.
At time $t$,
this additional error can thus be bounded by
\bel{aer}\sum_{k\in {\cal J}(t)} C_5\cdot \delta^{1/3} \left( \Big| \frac{d}{ dt} u_k^-(t)\Big|+ \Big| \frac{d}{ dt} u_k^+(t)\Big|
\right),\eeq
where the summation ranges over the set of all upward jumps in $u_\nu(t,\cdot)$ with size $\geq\ve_0$.  Since we are always  dealing with spatially periodic functions, it is understood
that the summation is over one period.
As soon as $u_\nu$ is given, by choosing $\delta>0$ sufficiently small
we can render the error in (\ref{aer}) as small as we like.  
\v
{\bf 3. Remaining domain, containing only small jumps. }
Consider an open interval $\bigl]x_k(t), x_{k+1}(t)\bigr[$ where
$u_\nu(t,\cdot)$ does not attain a local max or min, and does not have any jumps of size $\geq\ve_0$.

To derive an error estimate  on the function $w^*= w^*_k(t,x)$, assume that, on this interval,
$u_\nu$ solves the conservation law with flux $f_\nu$.
Moreover, assume: 
\begi
\item[{$\bf (A_\delta)$}]
{\it At time $t$, any two jumps in $u_\nu(t,\cdot)$ have distance $\geq 2\delta^{1/3}$ from each other. }
\endi
Performing the mollification  (\ref{wmoll}),  this implies
\bel{wunc}\bigl| w_k^*(t,x)- u_\nu(t,x)\bigr|~\leq~\ve_0\qquad\quad\forall  ~x \in \bigl]x_k(t), x_{k+1}(t)\bigr[\,.\eeq

Call $\xi_i(t)\in \bigl[x_k(t), x_{k+1}(t)\bigr]$ the locations of the jumps in 
$u_\nu(t,\cdot)$ with size
$$\sigma_i\,\doteq\,u_\nu\bigl(t,\xi_i(t)+\bigr)-u_\nu\bigl(t,\xi_i(t)+\bigr)~<~\ve_0\,.$$ 
If $u_\nu$ does not have  any jump located inside the interval $\bigl[x-\delta^{1/3}, \, x+\delta^{1/3}\bigr]$, then trivially
$$w(t,x)\,=\,u_\nu(t,x),\qquad w_t(t,x)\,=\, w_x(t,x)\,=\,0.$$
Next, assume that  $u_\nu$ has a jump located at 
$\xi_i(t)\in \bigl[x-\delta^{1/3}, \, x+\delta^{1/3}\bigr]$. Notice that at most one jump
can be contained in this interval, because of $\bf (A_\delta)$.
Since this jump has size  $\sigma_i < \ve_0$, 
we have
\bel{f'w}
f'\bigl(w^*(t,x)\bigr) - \dot \xi_i(t)~=~\O(1)\cdot\ve_0\,.\eeq
Next, differentiating (\ref{wmoll}) we compute
$$w^*_x(t,x)~=~ \phi_{\delta^{1/3}}(x-x_i)\cdot \sigma_i\,,\qquad\qquad 
w^*_t(t,x)~=~- \phi_{\delta^{1/3}}(x-x_i)\cdot \sigma_i\dot\xi_i$$
By (\ref{f'w}), this yields 
\begin{align}
&\int \Big| w^*_t (t,x)+  f'\bigl(w^*(t,x)\bigr) w^*_x(t,x)\Big| \,dx 
\nonumber \\
& =~\int \bigg| -\sum_{|x_i-x|<\delta^{1/3}} \phi_{\delta^{1/3}}(x-x_i)\cdot \sigma_i\dot\xi_i
+\sum_{|x_i-x|<\delta^{1/3}} \phi_{\delta^{1/3}}(x-x_i)\cdot \sigma_i f'\bigl(w^*(t,x)\bigr)
\bigg| \,dx
\nonumber \\
& =~\O(1)\cdot\ve_0\cdot \TV\Big\{ u_\nu(t,\cdot)\,;~ \bigl]x_k(t), x_{k+1}(t)\bigr[\,\Big\}.
\label{errb33}
\end{align}

In addition, one has
\bel{errb6}\delta \, w_{xx}~=~\delta \cdot\bigl((\phi_{\delta^{1/3}})_{xx} * u_\nu\bigr)~=~\delta\cdot \O(1)\cdot \delta^{-2/3}
~=~\O(1)\cdot \delta^{1/3}.\eeq
Combining the previous bounds, for all $\delta>0$ small enough, we obtain
\bel{errb3}\sum_k
\int_{x_k(t)}^{x_{k+1}(t)} \Big|
w^*_t +  f(w^*)_x 
-\delta w^*_{xx}\Big|\, dx~ =~\O(1)\cdot\ve_0\cdot \TV\bigl\{ u_\nu(t,\cdot)\bigr\}\,.
\eeq
We recall that, since we are considering spatially periodic functions, all the above estimates are computed over one period.

It remains to examine the contribution to the error case where {$\bf (A_\delta)$} does not hold,
and the last bound in (\ref{errb33}) fails. 
For this purpose, consider the set of ``bad" times
$$
{\cal B}^\delta~\doteq~\Big\{ t\in [0,T]\,;~~u_\nu(t,\cdot)~\hbox{contains two fronts at distance $< 2\delta^{1/3}$ from each other}\Big\}.
$$
We trivially have
\bel{mbad}\lim_{\delta\to 0+}~
\meas({\cal B}^\delta)~=~0.\eeq
Therefore, by choosing $\delta>0$ small enough, the set ${\cal B}^\delta$ 
of times  where {$\bf (A_\delta)$} 
does not hold can be rendered as small as we like.
In particular, we can assume
\bel{Bbad} \int_{{\cal B}^\delta} \sum_{k} \left(\int_{x_k(t)}^{x_{k+1}(t)} \Big|
(w^*_k)_t +  f(w^*_k)_x 
-\delta (w^*_k)_{xx}\Big|\, dx\right)dt ~<~\ve_0\,.\eeq
Here the summation ranges over all intervals (within one period) where
$u_\nu(t,\cdot)$ does not attain a local max or min, and does not have any jump of size $\geq\ve_0$.
\v
{\bf 4.} Summarizing all previous arguments:
\begi
\item At the beginning, a piecewise constant initial data $\bar u\in \L^1_{per}(\R)$ is given, together with $\ve_0>0$.
\item We then choose the integer $\nu\ge 1$ and construct the front tracking approximation
$u_\nu(t,\cdot)=S^\nu_t\bar u$ so that  (\ref{SSnu}) holds.
\item Finally, we choose $\ve,\delta>0$ and construct an approximate viscous solution $w$
such that 
$$\int_0^T \int 
\bigg| 
w_t +  \Big[\theta_\ve(w_x) f(w) + \bigl(1-\theta_\ve(w_x) \bigr)g(w)\Big]_x
-\delta w_{xx}\bigg|\, dx\, dt~\leq ~C \ve_0\, ,$$
where $C$ is a constant depending on $f,g$ and the initial data $\bar u$, but not on $\ve_0$.
\endi
Since $\ve_0>0$ can be taken arbitrarily small, this yields a proof of Proposition~\ref{p:41}.
\end{proof}

\section{The non-periodic case}
\label{sec:8}
\setcounter{equation}{0}

If $\bar u\in \L^1(\R)$, without any periodicity assumption, by finite propagation speed
we can approximate $\bar u$ with a piecewise constant initial data $\bar v$, say supported
inside $[-R,R]$. Given a time interval $[0,T]$, we can extend $\bar v$ to a periodic function:
$$\bar v(x+p) ~=~\bar v(x),$$
choosing the period $p$ so large that 
\bel{pbig}T\cdot \max~\Big\{ \bigl|f'(u)\bigr|, ~\bigl|g'(\omega)\bigr|\,;~~|\omega|\leq \|\bar v\|_{\L^\infty}\Big\}
~\leq~p-2R.\eeq
This guarantees that solutions with spatially periodic data
coincide with the original one, for $t\in [0,T]$ and $|x|\leq R+(p/2)$.
The previous result shows that these solutions can be obtained as the limits of  solutions
to the parabolic equation (\ref{3})
in $\L^1_{loc}(\R)$, letting $\ve,\delta\to 0+$.

\begin{remark}\label{r:31}{\rm 
It is important to notice that in general  the limit in (\ref{HatS}) cannot hold in $\L^1$ but only in $\L^1_{loc}$.  
For example, consider initial data $\bar u(x)\geq 0$, not identically zero.
Since (\ref{30}) is in conservation form, it follows
$$\| S^{\ve,\delta}_t \bar u\|_{\L^1} ~=~\|\bar u\|_{\L^1}\qquad \forall t\geq 0.$$
On the other hand, by the strict inequality $f(u)<g(u)$ for all $u\in\R$,  the limit
of front tracking approximations satisfies
$$\| S_t \bar u\|_{\L^1} ~<~\|\bar u\|_{\L^1}\qquad \forall t> 0.$$
This apparent paradox can be explained observing that in the viscous
semigroups $S^{\ve,\delta}$ the total mass is conserved, but there is a huge diffusion when $u_x\approx 0$.
As a result,
part of the mass is pushed toward $\pm\infty$.   
In the limit as $\ve\to 0$, some mass leaks to 
$\pm\infty$ in finite time and disappears.
}
\end{remark}

\section{Concluding Remarks}
\label{sec:9}
\setcounter{equation}{0}

In this paper we study the stable case 
where $f(u) < g(u)$ for all $u\in\R$,  for general flux functions $f$ and $g$. 
The unstable case where $f(u) > g(u)$ is treated in the companion paper \cite{ABSu}, 
for strictly convex flux functions. 
Naturally, the next topic is the case where the graphs of $f$ and $g$ intersect at one or more points,
dividing the solution domain into stable and unstable regions for different values of $u$. 
Such cases are particularly relevant 
in connection to traffic flow models with hysteresis, where  the flux functions are concave. 
Recorded traffic data from highways \cite{TM_data,CF} indicates that $f<g$ 
for low density traffic, and $f>g$ for high density traffic. 
Our results indicates that the model \eqref{1} is stable for low density traffic and unstable for high density traffic.
As shown in Section~\ref{sec:2} in \cite{ABSu},  in the unstable region 
an arbitrarily small random perturbation will trigger new spikes in the solution.
Once formed, such spikes would persist in time, possibly leading to the  formation of stop-and-go waves.
Such  phenomena would be interesting future research topics. 

\v
{\small
{\bf Acknowledgment.} The research by A.~Bressan was partially supported by NSF with
grant  DMS-2306926, ``Regularity and approximation of solutions to conservation laws". 
The research by D.~Amadori was partially supported by the Ministry of University and Research (MUR), Italy under the grant PRIN 2020 - Project N. 20204NT8W4, ``Nonlinear evolution PDEs, fluid dynamics and transport equations: theoretical foundations and applications" and by the INdAM-GNAMPA Project 2023, CUP E53C22001930001, ``Equazioni iperboliche e applicazioni".
D.~Amadori acknowledges the kind hospitality of Penn State University, where this research started.
}

\addcontentsline{toc}{section}{References}

\end{document}